\title{Auslander-Reiten sequences for Gorenstein rings of dimension one 
} 
\author{Robert Roy
}
\DeclareMathAlphabet{\mathcal}{OMS}{cmsy}{m}{n}
\date{}
\DeclareMathOperator{\Irr}{Irr}
\DeclareMathOperator{\length}{length}
\DeclareMathOperator{\push}{push}
\DeclareMathOperator{\Aut}{Aut}
\DeclareMathOperator{\avg}{avg}
\DeclareMathOperator{\CM}{CM}
\DeclareMathOperator{\End}{End}
\DeclareMathOperator{\Hom}{Hom}
\DeclareMathOperator{\Ext}{Ext}
\DeclareMathOperator{\syz}{syz}
\DeclareMathOperator{\cok}{cok}
\DeclareMathOperator{\im}{im}
\DeclareMathOperator{\soc}{soc}
\DeclareMathOperator{\Ann}{Ann}
\DeclareMathOperator{\rank}{rank}
\DeclareMathOperator{\id}{id}
\DeclareMathOperator{\trace}{trace}
\DeclareMathOperator{\Mat}{Mat}
\renewcommand{\to}{\longrightarrow}
\newcommand{\ra}{\rightarrow}
\newcommand{\NN}{\mathbb{N}}
\newcommand{\ZZ}{\mathbb{Z}}
\newcommand{\QQ}{\mathbb{Q}}
\newcommand{\Rbar}{\overline{R}}
\newcommand{\Rhat}{\hat{R}}
\newcommand{\Dbar}{\overline{D}}
\newcommand{\Dhat}{\hat{D}}
\newcommand{\Bbar}{\overline{B}}
\renewcommand{\hbar}{\overline{h}}
\newcommand{\gbar}{\overline{g}}
\newcommand{\fbar}{\overline{f}}
\newcommand{\stEnd}{\underline{\End}}
\newcommand{\m}{\mathfrak{m}}
\newcommand{\n}{\mathfrak{n}}
\newcommand{\into}{\hookrightarrow}
\newcommand{\onto}{\twoheadrightarrow}
\newcommand{\p}{\mathfrak{p}}
\newcommand{\q}{\mathfrak{q}}
\newcommand{\J}{\mathcal{J}}
\newcommand{\gpt}{\tilde{\gamma'}}
\newcommand{\Rpb}{\overline{R'}}
\newcommand{\F}{\mathfrak{F}}
\theoremstyle{plain}
\newtheorem{theorem}{Theorem}
\newtheorem{prop}[theorem]{Proposition}
\newtheorem{lem}[theorem]{Lemma}
\newtheorem*{theorem*}{Theorem}
\newtheorem*{prop*}{proposition}
\theoremstyle{definition}
\newtheorem{defn}[theorem]{Definition}
\newtheorem{remark}[theorem]{Remark}
\newtheorem{notation}[theorem]{Notation}
\newtheorem{caveat}[theorem]{Caveat}
\numberwithin{theorem}{section}
\numberwithin{equation}{section}
\begin{document}
\large
\maketitle

\bibliographystyle{amsplain}




\begin{abstract}

Let $R$ be a complete  local Gorenstein ring of dimension one, with maximal ideal $\m$. We show that if $M$ is a Cohen-Macaulay $R$-module which begins an AR-sequence, then this sequence is produced by a  particular endomorphism of $\m$ corresponding to a minimal prime ideal of $R$. We apply this result to determining the shape of some components of stable Auslander-Reiten quivers, which in the considered examples are shown to be tubes. 

\end{abstract}

\section{Introduction}
\label{intro}
The theory of Auslander-Reiten (AR) quivers is central in the study of artin algebras. Regarding AR theory for maximal Cohen-Macaulay  modules over a complete Cohen-Macaulay local ring, the cases of finite AR quivers have been studied thoroughly (see \cite{Yoshino:book}), but in the more common case of infinite type, shapes of AR quivers seem to be largely unknown. 

The paper \cite{AKM} agrees with this assessment (cf. its introduction),  and begins to bridge this gap. It establishes a variety of lemmas in the  context of symmetric orders over a DVR, $\mathcal{O}$, and applies  these lemmas to proving the shape (namely, a tube) of some components of the AR quiver of a truncated polynomial ring $\mathcal{O}[X]/(X^n)$. The  work of the latter calculation consisted largely in proving that a module is indecomposable, but also in proving that a sequence is an AR sequence. 
Our main result, Theorem~\ref{main theorem}, makes the latter task much easier, in the setting of a complete local Gorenstein ring $R$ of dimension one, with maximal ideal $\m$. Namely it shows that there exists a set of elements of $ \End_R \m $, corresponding to the minimal primes of $R$, which produce the AR sequences in a concrete way.  We apply the theorem in Section 6, where we establish the shapes of AR components (again, tubes) over a graded hypersurface of the form $k[x,y]/((bx^p+y^q)f)$ where  $f \in k[x,y]$ is an arbitrary homogeneous polynomial.

Regarding the structure of this paper, Sections 2 and 3, as well as the Appendix, consist of supporting material for the proof of our main result  in Section 4.   In Section 5 we give background concerning the abstract notion of stable translation quivers, and in Section 6 we record lemmas concerning the AR quiver of $R$.  Specifically,  Proposition~\ref{tube proposition} establishes a criterion, based on material in  \cite{AKM}, for proving that an AR component is a tube. In Section 7 we apply this proposition and our main result, to an example.

We would like to thank the developers of Singular \cite{DGPS}; we used it to compute many examples testing Theorem~\ref{main theorem} and Proposition~\ref{syz gamma} before  proving them.

\section{Preliminaries} \label{sec:prelims}

\notation \label{initial notation}

Throughout this paper, all rings are assumed noetherian. A ring which is described with any subset of the words \{reduced, Cohen-Macaulay, Gorenstein, regular\} is implicitly commutative.  By a \emph{graded ring} we will mean a $\ZZ$-graded ring, that is, a ring $A=\bigoplus_{i \in \ZZ} A_i$ satisfying $A_iA_j \subseteq A_{i+j}$. If $A$ has not been referred to as a graded ring, $\J(A)$ will  denote the Jacobson radical of $A$, whereas if $A$ is explicitly graded, $\J(A)$ will denote the intersection of all maximal graded left ideals of $A$ (in our situations this will always coincide with the intersection of all maximal graded right ideals). Similarly, but when $A$ is commutative, if $A$ is not given a grading then $Q(A)= A[\text{nonzerodivisors}]^{-1}$ (the total quotient ring of $A$), whereas if $A$ is graded then $Q(A)=A[\text{homogeneous nonzerodivisors}]^{-1}$.

A (graded) ring $A$ is said to be  (graded-) \emph{local} if $A/\J(A)$ is a division ring.  By a \emph{connected graded} ring we shall mean a commutative $\NN$-graded ring $R=\bigoplus_{i \ge 0} R_i$ such that $R_0$ is a field. In this case $\Rhat$ will denote the $\m$-adic completion of $R$, where $\m=\bigoplus_{i \ge 1} R_i$. If $R$ is any commutative ring, $\min R$ will denote its set of minimal primes.  If $R$ is  furthermore  local or graded-local, we usually write $\m_R$ instead of $\J(R)$.
%

  We will say that an $R$-module $M$ \emph{has rank} (specifically, rank $n$), if $M \otimes_R Q(R)$ is a free $Q(R)$-module (of rank $n$). If $R$ is  reduced, then $\overline{R}$ will denote the integral closure of $R$ in $Q(R)$.

\subsection{Trace lemmas.} \label{sec:trace lemmas}

We establish some preliminary lemmas regarding trace. Observations of this general type have certainly been made before; see \cite[proposition 5.4]{Auslander:rationalsing}. 
First, we define the trace of an endomorphism of an arbitrary finitely generated projective module, as in \cite{hattori1965}.
\defn \label{defn:trace}  Let $A$ be a commutative ring, and let $P$ be a finitely generated projective $A$-module. Then the map $\mu_P \colon \Hom_A(P,A) \otimes_A P \to \End_A P$ given by $f \otimes x \mapsto(y \mapsto f(y)x)$ is an isomorphism, by Lemma~\ref{lemma:stable hom}. Let $\epsilon \colon  \Hom_A(P,A) \otimes_A P \to A$ denote the map given by $f \otimes x \mapsto f(x)$. For $h \in \End_A P$, we define $\trace(h)=\epsilon(\mu_P^{-1}(h))$. If $e_1,\dots ,e_n$ and $\varphi_1, \dots, \varphi_n \in \Hom_A(P,A)$ are such that $\mu_P(\sum_{i=1}^n \phi_i \otimes e_i)=\id_P$, then $\trace(h)$ furthermore equals $\sum_{i=1}^n \phi_i(h(e_i))$. From this, and using that $P=\sum_i A e_i$, it follows that trace is symmetric, in the sense that $\trace(gh)=\trace(hg)$ for all $g,h \in \End_A P$.

\begin{remark} \label{rmk:trace} We can see that the above definition of trace specializes to the usual one when $P$ is free, by taking the aforementioned $\{e_i,\phi_i\}_i$ to be a free basis and the corresponding projection maps. If $A=k_1 \times \ldots \times k_s$ is a product of fields $k_i$, then by a similar argument we see that for any $h \in \End_A P$, we have $\trace(h)=(\trace(h \otimes_A k_1), \ldots, \trace(h \otimes_A k_s))$. \end{remark}

Let $R$ be a commutative ring and set $Q=Q(R)$. Recall that if $R$ is an ungraded reduced ring, then $Q$ is the product of fields $R_\p=Q(R/\p)$ where $\p$ ranges over $\min R$. In particular, each $R_{\p}$ is an ideal of $Q$, and a $Q$-algebra.

\begin{lem} \label{trace is integral} Let $R$ be a reduced ring (possibly graded), let $M$  be a finitely generated $R$-module such that $M \otimes_R Q$ is $Q$-projective, and let $h \in \End_{R} M$. Then  $\trace(h \otimes_{R} Q) \in \Rbar$. (In the ungraded case, the condition that $M \otimes_R Q$ is $Q$-projective is automatically satisfied, since $Q$ is semisimple.)\end{lem}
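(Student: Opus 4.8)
The plan is to reduce to the local case and then exhibit $\trace(h \otimes_R Q)$ as a root of a monic polynomial with coefficients in $R$ (or, in the graded case, in the appropriate homogeneous component), which is exactly what it means to lie in $\Rbar$. First I would observe that by Remark~\ref{rmk:trace} (the case where the base ring is a product of fields) together with the description of $Q$ as $\prod_{\p \in \min R} R_\p$ in the ungraded case, the trace $\trace(h \otimes_R Q)$ is the tuple of the traces $\trace(h \otimes_R R_\p)$ over the residue fields of the minimal primes; in the graded case one argues analogously with $Q$ being a finite product of graded fields. So the claim is local on $\min R$, and the question becomes whether each such "component trace" is integral over the corresponding quotient domain $R/\p$.

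Next I would pick a presentation of the projectivity. Since $M \otimes_R Q$ is $Q$-projective and finitely generated, there is a $Q$-module $N$ with $(M\otimes_R Q) \oplus N \cong Q^n$ for some $n$. Extending $h \otimes_R Q$ by zero on $N$ gives an endomorphism $\tilde h$ of $Q^n$ with $\trace(\tilde h) = \trace(h \otimes_R Q)$ (trace is additive on direct sums, and the summand $N$ contributes $0$); so I may as well compute the trace of an honest $n \times n$ matrix over $Q$ obtained from $h$. The point is now to show this matrix trace is integral over $R$. Clearing denominators, one can choose a common nonzerodivisor so that the idempotent $e \in \Mat_n(Q)$ projecting $Q^n$ onto $M \otimes_R Q$ has a well-understood relationship with $M$ itself: the entries of $e$ generate, as an $R$-submodule of $Q$, a finitely generated module, and $e$ satisfies $e^2 = e$. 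Then $h \otimes_R Q$ corresponds to $e A e$ for some matrix $A$ with entries in $Q$, but one can arrange (by choosing the lift of $h$ carefully — $h$ really is an endomorphism of $M$, so it maps a finite generating set of $M$ into $M$) that the relevant products land in a finitely generated $R$-submodule $M'$ of $Q$ containing $R$. The element $\trace(h\otimes_R Q) = \trace(eAe) = \trace(Ae)$ is then an $R$-linear combination of products of entries of $A$ and $e$, all of which lie in the finitely generated $R$-module $M'$; since multiplication by $\trace(h \otimes_R Q)$ can be arranged to preserve $M'$ (again using that $e$ and $A$ come from endomorphisms of a fixed module), the determinant trick / Cayley–Hamilton applied to multiplication by $\trace(h\otimes_R Q)$ on $M'$ yields a monic polynomial over $R$ killed by $\trace(h \otimes_R Q)$, hence $\trace(h \otimes_R Q) \in \Rbar$.

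The cleanest route, which I would actually write, sidesteps the matrix bookkeeping: reduce to $R$ a domain (a component $R/\p$ as above), let $K = Q(R)$, and use the standard fact that a finitely generated projective $K$-module with an endomorphism defined over a subring's worth of data has integral trace — concretely, choose a finitely generated $R$-submodule $L \subseteq M \otimes_R K$ with $L \otimes_R K = M \otimes_R K$ and $R \subseteq \End_K(\cdot)$-stable under the idempotent; then $\trace(h \otimes_R K)$ lies in the finite $R$-algebra generated by the entries of the idempotent and of a lift of $h$, and any element of a module-finite $R$-algebra inside $K$ is integral over $R$, i.e. lies in $\Rbar$. The parenthetical remark about the ungraded case is immediate: $Q$ is a finite product of fields, hence semisimple, so every module over it — in particular $M \otimes_R Q$ — is projective, so the hypothesis is automatic.

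The main obstacle I anticipate is purely organizational rather than deep: making sure the "lift" of $h \otimes_R Q$ to a matrix, together with the idempotent $e$, can be chosen so that all the trace-defining products live in a single finitely generated $R$-submodule of $Q$ on which multiplication by the trace acts — i.e., keeping the denominators under control uniformly. This is where one must genuinely use that $M$ is finitely generated over $R$ and that $h$ is an $R$-endomorphism (not just a $Q$-endomorphism), via the isomorphism $\mu_P$ and the explicit formula $\trace(h) = \sum_i \phi_i(h(e_i))$ from Definition~\ref{defn:trace}. Once that is set up, integrality is the usual determinant-trick argument and requires no cleverness.
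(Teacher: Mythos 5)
Your reductions at the start are fine and match the paper's: trace over $Q=\prod_{\p\in\min R}R_\p$ is computed componentwise (Remark~\ref{rmk:trace}), so one may assume $R$ is a domain with fraction field $K$; and the parenthetical about the ungraded case is handled correctly. (The graded case deserves more than ``one argues analogously'': the paper reduces it to the ungraded case by checking that the trace is unchanged under the further localization from the homogeneous quotient ring to the full one, and that the two integral closures coincide by \cite[Corollary 2.3.6]{Swanson-Huneke}.) The genuine gap is in the core integrality step. You propose to exhibit a finitely generated $R$-submodule $M'\subseteq K$ with $\trace(h\otimes K)\cdot M'\subseteq M'$ and then apply the determinant trick to \emph{multiplication by the trace}, or equivalently you assert that the trace lies in ``the finite $R$-algebra generated by the entries of the idempotent and of a lift of $h$.'' Neither claim is justified, and neither is true as stated: the entries of the idempotent $e$ are arbitrary elements of $K$, and the $R$-subalgebra of $K$ they generate is typically not module-finite (already $R[1/x]\subseteq k(x)$ over $R=k[x]$ fails). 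The trace is a single scalar $\sum_i\phi_i(h(e_i))$; it does not act on anything in a way that visibly stabilizes a finitely generated module, and clearing denominators only places it in a finitely generated $R$-submodule of $K$, which says nothing about integrality. So the argument never actually proves $\trace(h\otimes K)\in\Rbar$.

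The correct move, which the paper makes, is to apply Cayley--Hamilton not to multiplication by the trace but to $h$ itself: since $M$ is finitely generated and $h\in\End_RM$, $h$ satisfies a monic polynomial $f(X)\in R[X]$ (\cite[Theorem 2.1]{Matsumura}). Passing to a splitting field $L\supseteq K$ of the characteristic polynomial $\chi$ of $H=h\otimes K$, every eigenvalue $\alpha_i$ is a root of the minimal polynomial of $H$ and hence of $f$, so each $\alpha_i$ is integral over $R$; the coefficients of $\chi$ lie in $Q\cap R[\alpha_1,\dots,\alpha_s]\subseteq\Rbar$, and the trace is (up to sign) one of these coefficients. Note that you cannot shortcut this by taking the trace of the matrix of $h$ on a generating set of $M$: that matrix has trace in $R$, but its trace need not equal $\trace(h\otimes K)$ when the generating set is not a basis of $M\otimes K$. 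The eigenvalue argument (or an equivalent) is genuinely needed here, and it is the one piece your proposal is missing.
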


\begin{proof} First suppose the graded case. Let $Q'=R[\text{nonzerodivisors}]^{-1}$; thus $Q'$ is a localization of $Q$ is a localization of $R$, and $R \subseteq Q \subseteq Q'$. As $M \otimes_R Q$ is $Q$-projective, there exists a finite set $\{e_i \in M \otimes_R Q\}_i$ and corresponding $\{\varphi_i \colon M \otimes_R Q \to Q\}_i$ such that $y=\sum_i \phi_i(y)e_i$ for all $y \in M \otimes_R Q$. Then the images of the $e_i$ in $M \otimes_R Q'$ have the property that $y=\sum_i (\phi_i \otimes_Q Q') (y)e_i$ for all $y \in M \otimes_R Q'$. Therefore $\trace(h \otimes_R Q)=\sum_i \phi_i((h\otimes_R Q )(e_i))= \sum_i (\phi_i\otimes_Q Q')((h\otimes_R Q' )(e_i))=\trace(h \otimes_R Q')$. Since $\Rbar$ is equal to the integral closure of $R$ in $Q'$ by \cite[Corollary 2.3.6]{Swanson-Huneke}, we are thus reduced to the ungraded case.

 Since $\Rbar=\prod_{\p \in \min R} \overline{R/\p}$, we see by Remark~\ref{rmk:trace} that it suffices to show  $\trace(h \otimes_{R} R_{\p}) \in \overline{R/\p}$, for each $\p \in \min R$. As $h \otimes_{R} R_{\p}=(h \otimes_{R} R/\p) \otimes_{R/\p} R_{\p}$,  we may assume $R$ is a domain. By  \cite[Theorem 2.1]{Matsumura}, $h$ satisfies a monic polynomial with coefficients in $R$, say $f(X) \in R[X]$. Let $H =h \otimes_{R} Q$, and let $\mu(X) \in Q[X]$ be the minimal polynomial of $H$. Let $\chi(X) \in Q[X]$ be the characteristic polynomial of $H$, and take a field extension $L \supseteq Q$ over which $\chi(X)$ splits, say $\chi(X)=(X-\alpha_1)(X-\alpha_2) \cdots  (X-\alpha_s)$, $\alpha_i \in L$. Each $\alpha_i$ is also a root of $\mu(X)$, and therefore of $f(X)$.  Therefore $R[\alpha_1,...,\alpha_s]$ is an integral extension of $R$. Thus $\Rbar \supseteq Q \cap R[\alpha_1,...,\alpha_s]$, which contains the coefficients of $\chi(X) $. Finally, recall that $\trace(H)$ is the degree-($s-1$) coefficient of $\chi$. \qed \end{proof}

\begin{lem} \label{trace-in-radical} In the situation of Lemma~\ref{trace is integral},  assume that $\dim R=1$ and  that   $R$  is either a complete local ring or a connected graded ring. If some power of $h$ lies in $\m_R \End_{R} M$, then $\trace(h \otimes_{R} Q) \in \J(\Rbar)$. \end{lem}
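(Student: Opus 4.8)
The plan is to reduce to Lemma~\ref{trace is integral} and then analyze what happens modulo $\m_R$. By that lemma we already know $t := \trace(h \otimes_R Q) \in \Rbar$, so it remains to show that $t$ lies in every maximal ideal of $\Rbar$. Since $\dim R = 1$ and $R$ is complete local (or connected graded), $\Rbar$ is a semilocal ring finite over $R$ (module-finite, because $R$ is excellent or by the graded analogue), so $\J(\Rbar) = \sqrt{\m_R \Rbar}$ — that is, $\Rbar/\m_R\Rbar$ is artinian and $\J(\Rbar)$ is its nilradical. Hence it suffices to show that the image $\bar t$ of $t$ in $\Rbar/\m_R\Rbar$ is nilpotent.

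Next I would use the trace-symmetry and the hypothesis. Write $\bar R = R/\m_R$ (a field), and let $n$ be such that $h^n \in \m_R \End_R M$. I want to compute $\trace((h\otimes Q)^n) = \trace(h^n \otimes Q)$ and see that it lies in $\m_R \Rbar$; then since trace of a power relates to the power of the trace modulo lower terms (via Newton's identities / the characteristic polynomial), I can conclude $\bar t$ is nilpotent in $\Rbar/\m_R\Rbar$. Concretely: pick $\{e_i \in M\otimes Q\}$, $\{\varphi_i\colon M\otimes Q \to Q\}$ with $y = \sum_i \varphi_i(y)e_i$. Writing $h^n = \sum_j m_j g_j$ with $m_j \in \m_R$ and $g_j \in \End_R M$, we get
\[
\trace(h^n \otimes Q) \;=\; \sum_j m_j \,\trace(g_j \otimes Q) \;\in\; \m_R \Rbar,
\]
using Lemma~\ref{trace is integral} to know each $\trace(g_j\otimes Q) \in \Rbar$. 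So $\overline{\trace(h^n\otimes Q)} = 0$ in $\Rbar/\m_R\Rbar$.

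Finally I would transfer this to $\bar t$. Over the artinian ring $\Rbar/\m_R\Rbar$, or better after base-changing $Q$ to a field $L$ splitting the characteristic polynomial $\chi$ of $H := h\otimes Q$ as in the proof of Lemma~\ref{trace is integral}, write the eigenvalues $\alpha_1,\dots,\alpha_s \in L$; each $\alpha_i$ is integral over $R$, hence (after adjoining them) lies in a ring integral over $R$, and the relevant power sums $p_k = \sum_i \alpha_i^k = \trace(H^k)$ lie in $\Rbar$. We have shown $p_n \in \m_R\Rbar$; iterating, $p_{kn} \in \m_R\Rbar$ for all $k \ge 1$ as well (same argument applied to $h^n$ in place of $h$, since a power of $h^n$ also lies in $\m_R\End_R M$). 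Then Newton's identities express the elementary symmetric functions $\sigma_j$ of $\alpha_1,\dots,\alpha_s$ — equivalently the coefficients of $\chi$, which lie in $\Rbar$ — as polynomials with $\QQ$-coefficients in the $p_k$; but I want a characteristic-free statement. Cleaner: modulo $\m_R\Rbar$, each $\alpha_i$ has the property that $\sum_i \bar\alpha_i^{kn} = 0$ in the residue field(s), which over a field of the enlarged residue ring forces each $\bar\alpha_i$ to be nilpotent (if $\bar\alpha_i$ were a unit in some factor field, the power sums could not all vanish — e.g. by a Vandermonde argument collecting equal eigenvalues). Hence $\bar t = \sum_i \bar\alpha_i$ is nilpotent in $\Rbar/\m_R\Rbar$, so $t \in \J(\Rbar)$.

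The main obstacle I anticipate is the characteristic-$p$ subtlety in the last step: Newton's identities need denominators, so I cannot directly conclude "power sums vanish $\Rightarrow$ elementary symmetric functions vanish." The fix is to argue directly with the eigenvalues in a splitting field of $\chi$ over the total quotient ring of $\Rbar/\m_R\Rbar$ (a product of fields): there, $\sum \bar\alpha_i^{kn}=0$ for all $k\ge 1$; grouping equal eigenvalues and using nonvanishing of a Vandermonde determinant in distinct nonzero values shows every nonzero $\bar\alpha_i$ would have to appear, contradiction — so all $\bar\alpha_i$ are zero, i.e. nilpotent upstairs, giving $t$ nilpotent mod $\m_R\Rbar$ and hence $t\in\J(\Rbar)$. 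One should also double-check the module-finiteness of $\Rbar$ over $R$ in the stated hypotheses (complete local of dimension one, or connected graded), which is standard, to justify $\J(\Rbar) = \sqrt{\m_R\Rbar}$.
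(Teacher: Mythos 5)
Your reductions are sound up to the point where you know $t=\trace(h\otimes_R Q)\in\Rbar$, that $\trace(h^{kn}\otimes_R Q)\in\m_R\Rbar$ for all $k\ge 1$, and that it suffices to prove $\bar t$ is nilpotent in $\Rbar/\m_R\Rbar$. The gap is in the last step, and it is fatal to the route rather than a repairable detail: the power-sum data you have retained is genuinely insufficient in positive characteristic. Your Vandermonde argument, applied to the distinct nonzero values $\delta_1,\dots,\delta_r$ occurring among the residues of the $\alpha_i^{\,n}$ with multiplicities $m_1,\dots,m_r$, only yields $m_j\cdot 1=0$ in the residue field, i.e.\ that each multiplicity is divisible by the characteristic --- not that no nonzero $\delta_j$ occurs. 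Concretely, in characteristic $3$ with $n=2$ the eigenvalue multiset $\{1,1,-1\}$ satisfies $\sum_i\alpha_i^{2k}=3=0$ for every $k\ge 1$ while $\sum_i\alpha_i=1\ne 0$; so no argument using only ``$p_{kn}\equiv 0$ for all $k$'' can conclude $p_1\equiv 0$. (Such eigenvalues cannot arise from an $h$ with $h^n\in\m_R\End_R M$, but that is exactly the information your reduction to traces discards.) In characteristic zero Newton's identities would rescue you, but the lemma carries no characteristic hypothesis.

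The paper keeps the discarded information as follows. After reducing to the complete local domain case (as you do implicitly), $\Rbar$ is a DVR, the module $M\Rbar\subseteq M\otimes_R Q$ is $\Rbar$-free, and by Nakayama one can choose a free $\Rbar$-basis $e_1,\dots,e_s$ lying inside $M$ itself. This embeds $\End_R M$ into $\Mat_{s\times s}(\Rbar)$ compatibly with $\trace(\_\otimes Q)$. Then the hypothesis gives $h^N\in\m_R\Mat_{s\times s}(\Rbar)\subseteq\J(\Rbar)\Mat_{s\times s}(\Rbar)$, so the image of $h$ in $\Mat_{s\times s}(\Rbar/\J(\Rbar))$ is a nilpotent matrix over a field and therefore has trace zero --- the point being that the \emph{entire matrix} of $h^N$ is small, not merely its trace. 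In your language: the matrix reduction shows that \emph{all eigenvalues} of $h$ vanish modulo $\J(\Rbar)$, which is the statement your power-sum bookkeeping fails to reach.
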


\begin{proof} As $\J(\Rbar)=\prod_{\p \in \min R} \J(\overline{R/\p})$, we may again assume $R$ is a domain. In the connected graded case, we have $\J(\overline{\Rhat}) \cap \Rbar =\J(\Rbar)$ by Lemma~\ref{bar commutes with hat}, and we can therefore assume the complete local case.  We can also assume $M \subseteq M\otimes_R Q$, i.e., replace $M$ by its image in $M\otimes_R Q$. Now let $M\Rbar$ denote the $\Rbar$-module of $M \otimes_{R} Q$ generated by $M$. Note that $M \Rbar$ is a free $\Rbar$-module, since all torsion-free $\Rbar$-modules are free. Since $\Rbar$ is local, we can choose a basis for $M \Rbar$ which consists of elements of $M$, say $e_1,...,e_n$. (Indeed, setting $n=\rank(M \Rbar)$, Nakayama's lemma allows us to find a  set $\{e_1,...,e_n\} \subset M$ such that $M\Rbar=\sum_i \Rbar e_i$. Then we have a surjective endomorphism of $M\Rbar$, equivalently an automorphism, mapping a basis onto $\{e_1,...,e_n\}$.)  By fixing this basis, we can identify $\End_{R} M$ as an $R$-subalgebra of the ring of $n \times n$ matrices $\Mat_{n \times n} (\Rbar)$, in the obvious way. By assumption on $h$, some power of $h$ lies in $\m_R \Mat_{n \times n} (\Rbar) \subseteq \J(\Rbar) \Mat_{n \times n}(\Rbar)$. Thus the image of $h$ in $\Mat_{n \times n}(\Rbar/\J(\Rbar))$ is nilpotent. The lemma now follows from the fact that over a field, any nilpotent matrix has zero trace. \qed \end{proof}

\subsection{Cohen-Macaulay modules and Gorenstein rings.} \label{subsec:CM and Gor}

For the remainder of this section, assume $R$ is a Cohen-Macaulay ring which is either a complete local ring or a connected graded ring. Let $\m=\m_R$. \begin{notation}
In the complete local case, let $\CM (R)$ denote the category of finitely generated maximal Cohen-Macaulay  $R$-modules, and (following~\cite{Auslander-Reiten:gradedCM}) let $L_p( R)$ denote the full subcategory of $\CM (R)$ whose objects $M$ have the property that $M_\p$ is $R_\p$-free for all   nonmaximal prime ideals $\p$. 
If  $R$ is instead a Cohen-Macaulay connected graded ring, we define $\CM(R)$ and $L_p(R)$ in the same way but restricted to graded modules (keeping the full Hom sets). \end{notation}

Let  $k=R/\m_R$. Then $R$ is  \emph{Gorenstein} if  and only if it is Cohen-Macaulay and $\dim_k (\Ext^{\dim R}_R(k,R))=1$.  If $R$ is Gorenstein, and $M \in \CM(R)$, then (\cite[Theorem 3.3.10]{BH}): $\Ext^i_R(M,R)=0$ for all $i \ge 1$, and the map $M \to \Hom_R( \Hom_R(M,R),R)$ given by $m \mapsto (f \mapsto f(m))$ is an isomorphism.  We  denote $\Hom_R(M,R)$ by $M^*$.

\subsection{Auslander-Reiten sequences.} \label{subsec:AR seq}

\defn \label{defn:AR-seq} Let $N$ be an indecomposable in $\CM(R)$. Then (cf. \cite[Lemma $2.9'$]{Yoshino:book}) we may define an \emph {Auslander-Reiten (AR) sequence starting from $N$} to be a short exact sequence
 \begin{equation}\label {eqn:AR seq} \xymatrix{ 0 \ar[r] & N \ar[r]^p & E \ar[r]^q & M \ar[r] & 0 }\end{equation}  
in $\CM(R)$ such that $M$ is indecomposable and the following property is satisfied: Any map $N \to L$ in $\CM(R)$ which is not a split monomorphism factors through $p$. Equivalently, $N$ is indecomposable and any map $L \to M$ in $\CM(R)$ which is not a split epimorphism factors through $q$. The sequence \eqref{eqn:AR seq} is unique if it exists, and is also called the AR sequence ending in $M$. Given an AR sequence~\eqref{eqn:AR seq}, $N$ is called the Auslander-Reiten translate of $M$, written $\tau(M)$; and $\tau^{-1}(N)$ denotes $M$. 

\defn \label{defn:irreducible} A morphism $f: X \to Y$ in $\CM(R)$ is called an \emph{irreducible morphism}  if (1) $f$ is neither a split monomorphism nor a split epimorphism, and (2) Given any pair of morphisms $g$ and $ h$ in  $\CM(R)$ satisfying $f=gh$, either $g$ is a split epimorphism or $h$ is a split monomorphism.

\sit \label{sit:irreducible} Let $L$ be an indecomposable in $\CM(R)$, and assume we have the AR sequence~\eqref{eqn:AR seq}. Then the following are equivalent (cf.  \cite[2.12 and $2.12'$] {Yoshino:book}):

 (a) $L $ is isomorphic to a direct summand of $E$.
 
 (b) There exists an irreducible morphism $N \to L$.
 
 (c) There exists an irreducible morphism $L \to M$.

\begin{theorem} \label{Yo theorem} (cf. \cite[Theorem 3.4]{Yoshino:book}, and  \cite[Theorem 3]{Auslander-Reiten:gradedCM}) Let $M \ncong R$ be an indecomposable in $ \CM(R)$. Then  $M \in L_p(R)$ if and only if there exists an AR sequence ending in $M$. \end{theorem}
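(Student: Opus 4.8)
The plan is to recall Auslander's proof of the existence of Auslander--Reiten sequences, whose engine is \emph{Auslander--Reiten duality}. Let $\omega_R$ be a canonical module of $R$ (which exists in both of the cases at hand, and equals $R$ when $R$ is Gorenstein), set $d=\dim R$, write $E=E_R(k)$ for the injective hull of the residue field and $D(-)=\Hom_R(-,E)$ for Matlis duality, and for $M\in L_p(R)$ define the Auslander--Reiten translate $\tau M=\Hom_R(\Omega^d\operatorname{Tr}M,\omega_R)$, where $\operatorname{Tr}$ is the Auslander transpose and $\Omega^d$ the $d$-th syzygy. Checking on localizations, each of these operations preserves membership in $L_p(R)$, so $\tau M\in L_p(R)$, and $\tau$ descends to an autoequivalence of the stable category $\underline{\CM}(R)$. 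The key input is the Auslander--Reiten formula: for $M\in L_p(R)$ there is an isomorphism of functors on $\CM(R)$,
\[
\Ext^1_R(M,-)\;\cong\;D\,\stHom_R(-,\tau M),
\]
established by local duality together with standard properties of the transpose and syzygies. Observe that for $M\in L_p(R)$ and any $N\in\CM(R)$ the modules $\Ext^1_R(M,N)$ and $\stHom_R(N,\tau M)$ vanish after localization at any nonmaximal prime (because $M_\p$ and $(\tau M)_\p$ are then free), hence have finite length; this is exactly where the hypothesis $M\in L_p(R)$ is used, so that Matlis duality is well behaved.

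Granting the formula, I would prove the implication $M\in L_p(R)\Rightarrow$ existence as follows. Since $M\ncong R$ is indecomposable it is nonzero in $\underline{\CM}(R)$, so $\tau M$ is a non-free indecomposable object of $\CM(R)$; hence $\Gamma:=\stEnd_R(\tau M)$ is an Artinian local ring with residue division ring $\Delta:=\Gamma/\J(\Gamma)$. Taking $N=\tau M$ in the formula gives $\Ext^1_R(M,\tau M)\cong D\Gamma$ compatibly with the natural $\Gamma$-action; the socle of $D\Gamma$ is $\Hom_R(\Delta,E)\cong\Hom_k(\Delta,k)$, and I choose a nonzero element $\xi$ of this socle whose corresponding $k$-functional does not vanish on $1\in\Delta$. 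Under the isomorphism, $\xi$ corresponds to a non-split short exact sequence
\[
0\longrightarrow\tau M\stackrel{p}{\longrightarrow}E'\stackrel{q}{\longrightarrow}M\longrightarrow0,
\]
and $E'\in\CM(R)$, being an extension of maximal Cohen--Macaulay modules over the Cohen--Macaulay ring $R$.

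To see this is the AR sequence it suffices to show $p$ is left almost split (equivalently $q$ is right almost split). A map $g\colon\tau M\to L$ factors through $p$ if and only if its image $\partial(g)$ under the connecting map $\stHom_R(\tau M,L)\to\Ext^1_R(M,L)$ vanishes. Rewriting $\Ext^1_R(M,L)\cong D\stHom_R(L,\tau M)$ by the formula and applying the Yoneda lemma, $\partial$ is the natural transformation corresponding to $\xi\in\Ext^1_R(M,\tau M)\cong D\Gamma$: explicitly $\langle\partial(g),f\rangle=\xi(fg)$ for $f\colon L\to\tau M$, where $fg\in\Gamma$. If $g$ is not a split monomorphism then for every $f$ the endomorphism $fg$ is a non-unit of the local ring $\Gamma$, hence lies in $\J(\Gamma)$, so $\xi(fg)=0$ since $\xi$ annihilates $\J(\Gamma)$; conversely if $g$ is a split monomorphism then $fg=\id_{\tau M}$ for some $f$, and $\xi(fg)=\xi(1)\neq0$ by the choice of $\xi$. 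Thus $g$ factors through $p$ exactly when it is not a split monomorphism, which together with the indecomposability of $M$ is precisely the definition of an AR sequence ending in $M$.

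For the converse, let $0\to N\to E\stackrel{q}{\to}M\to0$ be an AR sequence with class $\xi\in\Ext^1_R(M,N)$. For $r\in\m$ the map $r\cdot\id_M$ is not surjective by Nakayama, hence not a split epimorphism, hence factors through $q$; the pullback of $\xi$ along it, namely $r\xi$, therefore splits, so $r\xi=0$. Thus $\m\xi=0$, so $\xi$ generates a finite-length module supported at $\m$, whence $\xi_\p=0$ for every nonmaximal $\p$ and the localized sequence $0\to N_\p\to E_\p\to M_\p\to0$ splits; a short further argument (see \cite[Ch.~3]{Yoshino:book}, \cite{Auslander-Reiten:gradedCM}) then yields that $M_\p$ is $R_\p$-free, i.e.\ $M\in L_p(R)$. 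I expect the main obstacle to be the Auslander--Reiten formula itself: its proof requires the canonical module, local duality, and a careful treatment of the transpose, and it is there that the $L_p$ hypothesis is essential in order to keep the relevant Ext and stable-Hom modules of finite length; the remaining steps --- the socle bookkeeping, the Yoneda identification of the connecting map, and the localization of the AR class --- are then formal.
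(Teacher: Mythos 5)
The paper does not actually prove this statement: it is quoted as a known result, with the proof deferred to \cite[Theorem 3.4]{Yoshino:book}, \cite[Theorem 3]{Auslander-Reiten:gradedCM}, and the appendix of \cite{AKM}. Your proposal is essentially a reconstruction of that standard proof: Auslander--Reiten duality $\Ext^1_R(M,-)\cong D\,\stHom_R(-,\tau M)$, realization of the AR sequence as a socle element of $D\,\stEnd_R(\tau M)$, and the Yoneda/connecting-map computation showing the left almost split property. That is the same route as the cited sources, and the forward direction is correctly organized (including the correct observation that the $L_p$ hypothesis is what makes the relevant $\Ext$ and stable Hom modules finite length so that Matlis duality behaves).

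One concrete caveat on the converse. You correctly show $\m\xi=0$ for the class $\xi$ of the AR sequence, but the splitting of the localized sequence $0\to N_\p\to E_\p\to M_\p\to 0$ by itself says nothing about freeness of $M_\p$, so the ``short further argument'' you defer is pointed at the wrong sequence. The standard repair is: the almost split property implies that \emph{every} nonsplit extension of $M$ by any $Y\in\CM(R)$ is a pushout of $\xi$, i.e.\ $\Hom_R(N,Y)\to\Ext^1_R(M,Y)$, $f\mapsto f_*\xi$, is surjective; since $\m\xi=0$ this forces $\m\Ext^1_R(M,Y)=0$, hence $\Ext^1_R(M,Y)$ has finite length for all $Y\in\CM(R)$. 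Taking $Y=\syz_R M$ and localizing at a nonmaximal $\p$, the syzygy sequence $0\to(\syz_R M)_\p\to F_\p\to M_\p\to 0$ splits, so $M_\p$ is free. With that substitution your argument is complete and agrees with the cited proofs.
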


Notice also that if $R$ is Gorenstein, applying $(\_)^*$ shows that there exists an AR sequence ending in $M$ if and only if there exists an AR sequence starting from $M$.
The appendix of \cite{AKM} contains a nice proof of Theorem~\ref{Yo theorem} in a  setting which includes Gorenstein rings of dimension one. 

\begin{lem} \label{L_p=Q-projective} Assume $\dim R=1$, and let $N \in \CM(R)$. Then $N \in L_p(R)$ if and only if $N \otimes_R Q$ is a projective $Q$-module. \end{lem}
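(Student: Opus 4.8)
The plan is to translate the condition $N \in L_p(R)$ into a statement about localizations at the minimal primes and then compare with the structure of $Q = Q(R)$. Since $\dim R = 1$, the nonmaximal primes of $R$ are exactly the minimal primes, so $N \in L_p(R)$ means precisely that $N_{\p}$ is a free $R_{\p}$-module for every $\p \in \min R$ (together with $N \in \CM(R)$, which is assumed). On the other side, I would use the fact that $Q$ is obtained from $R$ by inverting all nonzerodivisors; since $\dim R = 1$, the nonzerodivisors are exactly the elements avoiding every minimal prime, so $Q$ is the semilocalization of $R$ at $\min R$, and $N \otimes_R Q = Q \otimes_R N$ decomposes (compatibly) according to the primes of $Q$, whose maximal ideals correspond to $\min R$.

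First I would set up the forward direction: assume $N \in L_p(R)$. Localizing at a minimal prime $\p$, the module $N_{\p}$ is free over the local ring $R_{\p}$; further localizing (i.e. passing to $N \otimes_R Q$) keeps it free over the corresponding localization of $Q$. A finitely generated module over $Q$ that becomes free after localizing at each maximal ideal of $Q$ is projective over $Q$ — here one uses that $Q$ is noetherian (as a localization of the noetherian ring $R$) and has finite spectrum, or simply that projectivity is local. Conversely, if $N \otimes_R Q$ is $Q$-projective, then for each $\p \in \min R$ the further localization $(N \otimes_R Q)$ at the maximal ideal of $Q$ corresponding to $\p$ is free; but that localization is canonically $N_{\p}$ over $R_{\p}$, since $R_{\p}$ is already obtained from $Q$ by localizing at that maximal ideal. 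Hence $N_{\p}$ is $R_{\p}$-free for every $\p \in \min R$, which together with the hypothesis $N \in \CM(R)$ gives $N \in L_p(R)$.

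The routine points to verify carefully are the identifications of the various localizations — that the primes of $Q$ are in bijection with $\min R$ and that localizing $Q$ at the prime corresponding to $\p$ recovers $R_{\p}$ — and the general fact that a finitely generated module over a noetherian ring with finitely many maximal ideals is projective if and only if it is locally free at each maximal ideal. In the graded case one additionally notes that a graded module which is free after forgetting the grading and localizing appropriately is still handled by the same semilocal argument, since $Q(R)$ for graded $R$ is the localization at homogeneous nonzerodivisors and the relevant comparison of localizations goes through verbatim; alternatively one can reduce to the ungraded statement using that graded projectivity over $Q$ can be checked after inverting the remaining (possibly non-homogeneous) nonzerodivisors, exactly as in the proof of Lemma~\ref{trace is integral}.

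The main obstacle I anticipate is bookkeeping rather than a genuine difficulty: making sure the isomorphism $(N \otimes_R Q) \otimes_Q (Q \text{ localized at the prime over } \p) \cong N_{\p}$ is set up correctly, and handling the graded case without circularity. No deep input is needed beyond the elementary theory of localization and the locality of projectivity; the statement is essentially a reformulation of the definition of $L_p(R)$ once one knows what $Q(R)$ looks like in dimension one.
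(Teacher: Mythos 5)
Your proposal is correct and follows essentially the same route as the paper: identify the primes of $Q$ with $\min R$ (so that localizing $N \otimes_R Q$ at the prime over $\p$ recovers $N_\p$), and invoke the fact that a finitely generated module over the noetherian ring $Q$ is projective if and only if it is free at every maximal ideal. The paper's proof is just a two-line version of this argument, so no further comparison is needed.
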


\begin{proof} The prime ideals of $Q$ correspond to $\min R$. Now use the fact that, since $Q$ is noetherian, a $Q$-module is projective precisely when it is free at all maximal ideals of $Q$ (cf. \cite[ Exercise 4.11]{Eisenbud:book}). \qed \end{proof}

\sit \label{sit:Gor 1} For the remainder of this section assume furthermore that $R$ is Gorenstein of dimension one, and $M \ncong R$ is an indecomposable in $ L_p(R)$. Then (ignoring a graded shift, in the graded case; it will not concern us) $\tau(M)=\syz_R(M)$ (cf. \cite[3.11]{Yoshino:book}), where $\syz_R(M)$ denotes the syzygy module of $M$, which is defined to be the kernel of a minimal surjection onto $M$ by a free $R$-module. The module $\tau^{-1}(M)=\syz_R^{-1}(M) \in L_p(R)$ is determined by $\syz_R(\syz_R^{-1}(M)) \cong M$, and can be computed via $\syz_R^{-1}\cong (\syz_R(M^*))^*$.

\defn \label{defn:stable}  Given a  ring $A$, and  $A$-modules $X$ and $Y$, $\underline{\Hom}_A(X,Y)$ denotes \\ $\Hom_A(X,Y)/ \{\text{maps factoring through projective $A$-modules}\}$; and $\stEnd_A(X)$ denotes $\underline{\Hom}_A(X,X)$. An $A$-homomorphism is said to be 
 \emph{stably zero} if it factors through a projective $A$-module.

\begin{lem}\label{lemma:stable hom} \cite[Lemma 3.8]{Yoshino:book} Let $A$ be a commutative ring, and let $X$ and $Y$ be finitely generated $A$-modules. The sequence 
\begin{center} $\xymatrix{\Hom_A(X,A) \otimes_A Y \ar[r]^-{\mu} & \Hom_A(X,Y) \ar[r] &\underline{\Hom}_A(X,Y) \ar[r] &0}$ \end{center}
 is exact, where $\mu \colon \Hom_A(X,A) \otimes_A Y \to \Hom_A(X,Y)$ is given by $f \otimes y \mapsto (x \mapsto f(x)y)$.\end{lem}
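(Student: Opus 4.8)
The plan is to make the image of $\mu$ explicit and recognize it as precisely the submodule of $\Hom_A(X,Y)$ by which $\underline{\Hom}_A(X,Y)$ is defined to be the quotient. Exactness at $\underline{\Hom}_A(X,Y)$ is then immediate: by Definition~\ref{defn:stable} the map $\Hom_A(X,Y) \to \underline{\Hom}_A(X,Y)$ is by construction the canonical surjection onto $\Hom_A(X,Y)$ modulo the maps factoring through a projective module. So the real content is the identity
\[
\im \mu \;=\; \{\, \phi \in \Hom_A(X,Y) : \phi \text{ factors through a projective } A\text{-module} \,\},
\]
which is exactness at $\Hom_A(X,Y)$.

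First I would unwind the definition of $\mu$. Any element of $\im\mu$ has the form $\mu\bigl(\sum_{i=1}^{n} f_i \otimes y_i\bigr) = \bigl(x \mapsto \sum_i f_i(x) y_i\bigr)$, which is the composite $X \to A^{n} \to Y$ of $x \mapsto (f_1(x),\dots,f_n(x))$ with $e_i \mapsto y_i$; conversely, any map of the shape $X \xrightarrow{\alpha} A^{n} \xrightarrow{\beta} Y$ equals $\mu\bigl(\sum_i (\pi_i\alpha) \otimes \beta(e_i)\bigr)$, where the $\pi_i$ are the coordinate functionals of $A^n$. Hence $\im\mu$ is exactly the set of maps $X \to Y$ that factor through a finitely generated free module. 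It then remains to check that, for a map out of the finitely generated module $X$, factoring through a projective module is equivalent to factoring through a finitely generated free module. Only one direction is nontrivial: given $\phi = b\circ a$ with $a\colon X \to P$, $b\colon P \to Y$ and $P$ projective, write $P$ as a direct summand of a free module $F$ by maps $\iota\colon P \to F$ and $\rho\colon F \to P$ with $\rho\iota = \id_P$; since $X$ is finitely generated, the images under $\iota a$ of a finite generating set of $X$ have supports contained in a common finite subset $J$ of a basis of $F$, so $\iota a = \kappa\gamma$ for some $\gamma\colon X \to F_J$, where $F_J \cong A^{|J|}$ is the associated finitely generated free direct summand of $F$ and $\kappa\colon F_J \to F$ is the inclusion; therefore $\phi = b\rho\iota a = (b\rho\kappa)\circ\gamma$ factors through $F_J$.

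The step requiring care — and the only point at which the argument is not purely formal — is this last one: since the intermediate projective $P$, and the free module $F$ covering it, may fail to be finitely generated, one cannot split off a finitely generated free summand abstractly but must instead invoke the finite generation of $X$ via a support argument. Everything else is routine unwinding of the definitions of $\mu$ and of $\underline{\Hom}$.
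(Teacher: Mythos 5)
Your proof is correct and is essentially the standard argument for this fact; the paper itself gives no proof, simply citing \cite[Lemma 3.8]{Yoshino:book}, and your reasoning matches that source's approach. In particular you correctly identify the one non-formal point — that for finitely generated $X$ a map factoring through an arbitrary projective also factors through a finitely generated free module — and your support argument handles it properly.
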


\begin{lem} \label{stEnd=Ext} $ \stEnd_R(M) \cong \Ext_R^1(\syz^{-1}_R(M),M) $ as left $\End_R(M)$-modules. \end{lem}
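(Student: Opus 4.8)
The plan is to realize $\Ext^1_R(\syz^{-1}_R(M),M)$ as a quotient of $\End_R(M)$ via the connecting homomorphism of a suitable $\Ext$ long exact sequence, identify the kernel of that quotient map with the set of stably zero endomorphisms, and then deduce $\End_R(M)$-linearity formally from naturality of the connecting map.

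Write $N=\syz^{-1}_R(M)$. By~\ref{sit:Gor 1} we have $\syz_R(N)\cong M$, so a minimal free surjection onto $N$ yields a short exact sequence
\[ 0\to M\xrightarrow{\iota} F\xrightarrow{\pi} N\to 0 \]
with $F$ finitely generated free. Applying $\Hom_R(-,M)$ and using that $\Ext^1_R(F,M)=0$ (as $F$ is free) gives an exact sequence
\[ \Hom_R(F,M)\xrightarrow{\iota^*}\End_R(M)\xrightarrow{\delta}\Ext^1_R(N,M)\to 0, \]
so $\delta$ induces an isomorphism $\End_R(M)/\im(\iota^*)\xrightarrow{\sim}\Ext^1_R(N,M)$.

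Next I would show that $\im(\iota^*)$ is precisely the set of endomorphisms of $M$ factoring through a projective module, so that the left-hand side is $\stEnd_R(M)$ by Definition~\ref{defn:stable}. The inclusion $\subseteq$ is clear, since $F$ is free. For $\supseteq$, suppose $g=\beta\alpha$ with $\alpha\colon M\to P$, $\beta\colon P\to M$ and $P$ projective; since finitely generated projective $R$-modules are free (in both the complete local and the connected graded case), $\Ext^1_R(N,P)=0$ because $R$ is Gorenstein and $N\in\CM(R)$ (Subsection~\ref{subsec:CM and Gor}). Hence the restriction map $\Hom_R(F,P)\to\Hom_R(M,P)$ is surjective, so $\alpha$ lifts to $\tilde\alpha\colon F\to P$ with $\tilde\alpha\iota=\alpha$, and $g=\beta\tilde\alpha\iota=\iota^*(\beta\tilde\alpha)\in\im(\iota^*)$. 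This yields an isomorphism $\stEnd_R(M)\xrightarrow{\sim}\Ext^1_R(N,M)$ of abelian groups (in fact of $R$-modules).

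Finally I would check $\End_R(M)$-linearity. The left $\End_R(M)$-module structure on $\stEnd_R(M)$ is induced by post-composition $\phi_*\colon\Hom_R(M,M)\to\Hom_R(M,M)$, and that on $\Ext^1_R(N,M)$ is $\Ext^1_R(N,\phi)$, for $\phi\in\End_R(M)$. Since the connecting homomorphism $\delta\colon\Hom_R(M,-)\Rightarrow\Ext^1_R(N,-)$ is natural in the second argument, $\delta\circ\phi_*=\Ext^1_R(N,\phi)\circ\delta$; in particular $\im(\iota^*)$ is stable under post-composition, and the isomorphism induced on quotients is $\End_R(M)$-equivariant, as required. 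The only place the hypotheses are genuinely used is the inclusion $\im(\iota^*)\supseteq\{\text{stably zero endomorphisms}\}$, which rests on the vanishing $\Ext^1_R(N,F)=0$ for $F$ free — this is where the Gorenstein assumption enters — together with the fact that projective means free here; the remaining bookkeeping, including module-structure compatibility, is formal, so I anticipate no serious obstacle beyond keeping track of which variable each construction is natural in.
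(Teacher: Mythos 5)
Your proof is correct and follows essentially the same route as the paper: apply $\Hom_R(-,M)$ to $0\to M\to F\to\syz^{-1}_R(M)\to 0$ and read off $\Ext^1_R(\syz^{-1}_R(M),M)$ as $\End_R(M)$ modulo the image of $\Hom_R(F,M)$, identified with the stably zero endomorphisms. If anything you are more careful than the paper: your justification of the inclusion $\{\text{stably zero}\}\subseteq\im(\iota^*)$ correctly invokes $\Ext^1_R(N,P)=0$ (which rests on the Gorenstein and Cohen--Macaulay hypotheses), whereas the paper asserts this step ``simply follows from the definition of projective,'' and your check of $\End_R(M)$-linearity via naturality of the connecting map is a detail the paper leaves implicit.
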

\begin{proof} Let $N=\syz^{-1}_R(M)$. By applying $\Hom_R(\_,M)$ to a short exact sequence $0 \ra M \ra F \ra N \ra 0$ where $F$ is free, we have an exact sequence $ \Hom_R(F,M) \ra \Hom_R(M,M) \ra \Ext_R^1(N,M) \ra \Ext_R^1(F,M)=0$. It only remains to observe that  the image of $\Hom_R(F,M) \ra \Hom_R(M,M)$ consists of all endomorphisms factoring through projectives, which simply follows from the definition of projective. \qed \end{proof}

 \begin{remark}  \label{AR seq as pushout} Left $M \in L_p(R)$ be a nonfree  indecomposable. Then $\End_R M$ is a (graded-) local ring (cf. \cite[proposition 8]{Auslander-Reiten:gradedCM}), and therefore so is $\stEnd_R M$. It follows from Lemma~\ref{stEnd=Ext} and Theorem~\ref{Yo theorem} that the ring  $\stEnd_R(M)$ has a simple socle when considered as a left module over itself, and that if  $h : M \ra M$ generates this socle then the AR sequence starting from $M$ equals the pushout via $h$ of the short exact sequence $0 \ra M \ra F \ra \syz^{-1}_R (M) \ra 0$ where $F$ is free. In particular, if $\iota$ denotes the given injective map $M \ra F$, and $0\ra M \ra X \ra N \ra 0$ is the AR sequence starting from $M$, then $X \cong (M \oplus F)/\{(-h(m),\iota(m))| m \in M\}$.
\end{remark}

\section{Testing stable-vanishing with trace.} \label{sec:trace argument!}

In this section, let $R$ simply be a commutative ring, let $Q=Q(R)$, and let $M$ be a finitely generated $R$-module such that $M \otimes_R Q$ is a projective $Q$-module. Let $(\_)^*$ denote $\Hom_R(\_,R)$.
 
\notation \label{D_B} Given an $R$-algebra $B$, let $D_B(\_)$ denote  $\Hom_R(\_,B)$. Let $\nu_B$ denote  $D_B ((\_)^*)=D_B\circ D_R(\_)$, and let $\lambda_B$ denote the Hom-Tensor adjoint isomorphism $\lambda_B \colon D_B(M^* \otimes_R \_) \to \Hom_R(\_, \nu_B M)$. We also let $\mu_M$ denote the natural transformation  $\mu_M \colon M^* \otimes_R \_ \ra \Hom_R(M, \_ \,)$ given by $f \otimes x \mapsto (m \mapsto f(m)x)$. For future reference, we note that for a given $R$-module $X$, the map  $\lambda_B \circ (D_B \mu_M) \colon D_B \Hom_R(M,X) \to \Hom_R(X,\nu_B M)$ is given by the rule \begin{equation}\label{lambda D mu} [\lambda_B \circ D_B \mu_M](\sigma)(x)(f)=  \sigma(\mu_M(f \otimes x)) \, \text{, for all } \sigma \in D_B \Hom_R(M,X), \,\, x \in X, \,\, f \in M^*. \end{equation}

Let $E=Q/R$. The exact sequence $\xymatrix{0 \ar[r] & R \ar[r]^{\iota} & Q \ar[r]^q & E \ar[r] & 0}$ induces the  exact commutative diagram
\begin{equation} \label {dgm:D nu}
  \begin{gathered}
    \xymatrix{
      0 \ar[r] & D_R \Hom_R(M, \_\,) \ar[r] \ar[d]^{\lambda_R \circ D_R \mu_M}
      &  D_Q \Hom_R(M, \_\,) \ar[r]^{q_*} \ar[d]^{\lambda_Q \circ D_Q\mu_M} &  D_E \Hom_R(M, \_\,) \ar[d]^{\lambda_E \circ D_E\mu_M}  \\
      0 \ar[r] & \Hom_R(\_\,,\nu_R M) \ar[r]^{\iota_*} & \Hom_R(\_\,,\nu_Q M)  \ar[r] & \Hom_R(\_\,,\nu_E M) \\ 
    }
        \end{gathered}.
\end{equation}

     We now show that $D_Q\mu_M$ is an isomorphism on the category of finitely generated $R$-modules, so that the second map in the composable pair
   \begin{equation} \label {half-exact sequence}
  \begin{gathered}
    \xymatrix@C+45pt{    
       D_R \Hom_R(M, \_\,) \ar[r]^{\lambda_R \circ D_R\mu_M} & \Hom_R(\_\,,\nu_R M)  \ar[r]^{q_*  (\lambda_Q \circ D_Q\mu_M)^{-1}  \iota_*}  & D_E \Hom_R(M, \_\,)}
        \end{gathered}
\end{equation}
    is well-defined.

     \begin{lem} \label{lem:proof of these facts} \cite[Appendix]{AKM} 
          \begin{enumerate} \item The map $D_Q\mu_M$ is an isomorphism  on finitely generated $R$-modules, and the sequence~\eqref{half-exact sequence} is exact. \item If $R$ is Gorenstein of dimension one, and both $M$ and the input module $X$ lie in $\CM(R)$, then the image of $\lambda_R \circ D_R\mu_M$ consists of the stably zero maps $X \to \nu_R M$. \end{enumerate} \end{lem}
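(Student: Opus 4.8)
I would prove the two parts separately; the first is formal and the second contains all the content.

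\emph{Part (1).} Since $Q$ is a flat $R$-algebra and $M$ is finitely presented ($R$ being noetherian), for every finitely generated $R$-module $X$ there are natural isomorphisms $\Hom_R(X,Q)\cong\Hom_Q(X\otimes_R Q,Q)$, $\Hom_R(M,X)\otimes_R Q\cong\Hom_Q(M\otimes_R Q,\,X\otimes_R Q)$, and $M^{*}\otimes_R Q\cong\Hom_Q(M\otimes_R Q,Q)$; under these, $D_Q\mu_M=\Hom_R(\mu_M,Q)$ is carried to $\Hom_Q(-,Q)$ applied to the map $\Hom_Q(M\otimes_R Q,Q)\otimes_Q(X\otimes_R Q)\to\Hom_Q(M\otimes_R Q,\,X\otimes_R Q)$ of the same shape over $Q$. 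That last map is an isomorphism because $M\otimes_R Q$ is a finitely generated projective $Q$-module (Lemma~\ref{lemma:stable hom}, cf.\ Definition~\ref{defn:trace}), so $D_Q\mu_M$, and hence the middle vertical $\lambda_Q\circ D_Q\mu_M$ of~\eqref{dgm:D nu}, is an isomorphism; in particular~\eqref{half-exact sequence} is well defined. Its exactness is then a diagram chase in~\eqref{dgm:D nu}: the top row is $\Hom_R(\Hom_R(M,X),-)$ and the bottom row is $\Hom_R(X,\Hom_R(M^{*},-))$ applied to $0\to R\to Q\to E\to 0$, so both rows are exact; and given $\varphi$ with $q_{*}(\lambda_Q D_Q\mu_M)^{-1}\iota_{*}(\varphi)=0$, exactness of the top row writes $(\lambda_Q D_Q\mu_M)^{-1}\iota_{*}(\varphi)$ as the image of some $\tau$ from the upper-left corner, whereupon commutativity of the left square and injectivity of $\iota_{*}$ give $\varphi=(\lambda_R D_R\mu_M)(\tau)$; the reverse inclusion is immediate.

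\emph{Part (2), the easy half.} Now $R$ is Gorenstein of dimension one (of the type for which $\CM(R)$ is defined), $M,X\in\CM(R)$, and $M\otimes_R Q$ is $Q$-projective. By \cite[Theorem 3.3.10]{BH} the biduality map $M\to\nu_R M=M^{**}$ is an isomorphism, so I identify $\Hom_R(X,\nu_R M)$ with $\Hom_R(X,M)$; under this identification $\lambda_R\circ D_R\mu_M$ sends $\sigma\colon\Hom_R(M,X)\to R$ to $x\mapsto(f\mapsto\sigma(m\mapsto f(m)x))$ by~\eqref{lambda D mu}, and by Lemma~\ref{lemma:stable hom} applied to $(X,M)$ the stably zero maps $X\to M$ form the image of $\mu_X\colon X^{*}\otimes_R M\to\Hom_R(X,M)$, $\phi\otimes m\mapsto(x\mapsto\phi(x)m)$. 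To get $\operatorname{im}(\lambda_R\circ D_R\mu_M)\supseteq\operatorname{im}\mu_X$ I introduce the natural map $\theta\colon X^{*}\otimes_R M\to D_R\Hom_R(M,X)$, $\theta(\phi\otimes m)(g)=\phi(g(m))$; a direct computation with~\eqref{lambda D mu} shows $\lambda_R\circ D_R\mu_M\circ\theta=\mu_X$ (after $M\cong M^{**}$), which yields the inclusion.

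\emph{Part (2), the reverse inclusion.} By the exactness in part (1) it is equivalent to show that the composite $\Phi\colon D_R\Hom_R(M,X)\xrightarrow{\lambda_R D_R\mu_M}\Hom_R(X,\nu_R M)\to\underline{\Hom}_R(X,\nu_R M)$ is zero. Since $\Phi\circ\theta=0$, $\Phi$ factors through $\operatorname{coker}\theta$; identifying (via $X\cong X^{**}$) $D_R\Hom_R(M,X)$ with $(M\otimes_R X^{*})^{**}$ and $\theta$ with the canonical map $N\to N^{**}$ for $N:=M\otimes_R X^{*}$, I must show the induced map $\operatorname{coker}(N\to N^{**})\to\underline{\Hom}_R(X,\nu_R M)$ vanishes. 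I would fix a free presentation $R^{a}\to R^{b}\to M\to 0$, dualize it to the exact sequence $0\to M^{*}\to R^{b}\to R^{a}\to\operatorname{Tr} M\to 0$ (using $\Ext^{\ge 1}_R(M,R)=0$), and express $\ker\mu_M$ and $\operatorname{coker}\theta$ through $\operatorname{Tr} M$ and the low $\operatorname{Tor}$- and $\Ext$-modules against $X$; since $M\otimes_R Q$ is $Q$-projective, each of $\underline{\Hom}_R(M,X)$, $\underline{\Hom}_R(X,M)$ and $\operatorname{coker}(N\to N^{**})$ dies after $-\otimes_R Q$ and is therefore of finite length, so $\Phi=0$ becomes a statement about finite-length modules over the one-dimensional Gorenstein ring $R$, where the (length-preserving) duality $\Ext^{1}_R(-,R)$ on finite-length modules, together with $\Ext^1_R(\Ext^1_R(-,R),R)\cong\mathrm{id}$, lets one match the two sides. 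This is in substance the computation carried out in \cite[Appendix]{AKM}.

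\emph{Main obstacle.} Everything except the reverse inclusion in part (2) is routine. The real difficulty is controlling $\operatorname{coker}\theta$ — equivalently the failure of $M\otimes_R X^{*}$ to be reflexive — and showing this finite-length obstruction maps to $0$ in $\underline{\Hom}_R(X,\nu_R M)$; this is precisely the step where the Gorenstein hypothesis on $R$ and the hypothesis that $M\otimes_R Q$ be $Q$-projective (which forces the relevant modules to have finite length) are used essentially.
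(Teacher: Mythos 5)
Your part (1) and the containment $\im(\lambda_R\circ D_R\mu_M)\supseteq\{\text{stably zero maps}\}$ are correct; part (1) in particular matches the paper's argument (identify $D_Q\mu_M$ with $D_Q(\mu_{M\otimes Q})$, use projectivity of $M\otimes_R Q$, then chase~\eqref{dgm:D nu}), and your map $\theta$ with the computation $\lambda_R\circ D_R\mu_M\circ\theta=\mu_X$ is sound. The gap is the reverse inclusion in part (2), which you yourself flag as ``the real difficulty'': you reduce it to showing that $\cok\theta$ --- the failure of reflexivity of $M\otimes_R X^*$ --- maps to zero in $\underline{\Hom}_R(X,\nu_R M)$, and then assert that a computation with the duality $\Ext^1_R(-,R)$ on finite-length modules ``lets one match the two sides.'' That matching is exactly the statement to be proved; nothing in the sketch exhibits the required vanishing, and as written it is an appeal to the source rather than an argument.

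The paper sidesteps this entirely with a short direct argument that proves both inclusions at once. Fix $0\to\syz_R(M)\to F\xrightarrow{p} M\to 0$ with $F$ free, let $H\colon\Hom_R(M,X)\to\Hom_R(F,X)$ be induced by $p$, and compare $\lambda_R\circ D_R\mu_M$ for $M$ with the same construction for $F$ via the commutative diagram~\eqref{p diagram}. Since $F$ is free, the top row of that diagram consists of isomorphisms, and the image of $\Hom_R(X,F)\to\Hom_R(X,M^{**})$ is precisely the set of stably zero maps (any map factoring through a projective factors through $p$). Hence (2) follows once $D_R H$ is surjective. For that, $\cok H$ embeds into $\Hom_R(\syz_R(M),X)$ by left-exactness of $\Hom$, so $\cok H\in\CM(R)$ and $\Ext^1_R(\cok H,R)=0$ because $R$ is Gorenstein; dualizing $0\to\Hom_R(M,X)\to\Hom_R(F,X)\to\cok H\to 0$ gives the surjectivity. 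This is where the Gorenstein and Cohen--Macaulay hypotheses actually enter --- through $\Ext^1$-vanishing for maximal Cohen--Macaulay modules, not through a finite-length duality --- and you should replace your sketch of the reverse inclusion with an argument of this kind.
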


\begin{proof}   
    We can identify $\mu_M \otimes_R Q$  with $\mu_{M \otimes Q} \colon \Hom_Q(M \otimes_R Q , Q) \otimes_Q (\_ \otimes_R Q) \to \Hom_Q(M \otimes_R Q, \_ \otimes_R Q)$, which is an isomorphism because $M \otimes_R Q$ is a projective $Q$-module. Thus $D_Q\mu_M$ is an isomorphism, since it can be viewed as $D_Q(\mu_M \otimes_R Q)$. The exactness of~\eqref{half-exact sequence} is seen by chasing the diagram~\eqref{dgm:D nu}.
    
    Now we assume the hypotheses of (2). Let $\xymatrix{0 \ar[r] & \syz_R(M) \ar[r]&F\ar[r]^p&M\ar[r]&0}$ be a short exact sequence, where $F$ is a free $R$-module. Consider the commutative diagram \begin{equation} \label{p diagram} \xymatrix@C+10pt{(\Hom_R(F,X))^* \ar[d] \ar[r]&(F^* \otimes_R X)^*\ar[d] \ar[r] &  \Hom_R(X,F^{**}) \ar[d] & \Hom_R(X,F) \ar[d] \ar[l]\\ (\Hom_R(M,X))^* \ar[r]^{D_R \mu_M} & (M^* \otimes_R X)^* \ar[r]^{\lambda_R} & \Hom_R(X,M^{**}) & \Hom_R(X,M)\ar[l] \,\,,}\end{equation}
    where the vertical maps are induced by $p \colon F \to M$, and the horizontal maps on the right are the isomorphisms  induced by $M \cong M^{**}$ and $F \cong F^{**}$. It is easy to see that the image of the rightmost vertical map consists of the stably zero maps $X\to M$, and it follows that the third vertical map consists of the stably zero maps $X \to M^{**}$. Let $H$ denote the map $\Hom_R(M,X) \to \Hom_R(F,X)$ induced by $p$. Since the top row of diagram~\eqref{p diagram} consists of isomorphisms, establishing surjectivity of the leftmost vertical map, namely $D_R H$, is sufficient for proving (2). Let $N=\cok H$. By left-exactness of $\Hom$, we have a left-exact sequence    \begin{center}
    $\xymatrix{0 \ar[r] & \Hom_R(M,X) \ar[r]^H & \Hom_R(F,X) \ar[r] & \Hom_R(\syz_R(M),X)}$, \end{center}
     and therefore $N$ embeds into $\Hom_R(\syz_R(M),X)$. Thus $N \in \CM(R)$, so $\Ext_R^1(N,R)=0$. Therefore the sequence     
    $\xymatrix{0 \ar[r] & N^* \ar[r]& \Hom_R(F,X)^* \ar[r]^{D_R H} & \Hom_R(M,X)^* \ar[r]&0}$
     is exact, so (2) is proved. \qed \end{proof}
    
\begin{lem} \label{trace argument!}  Assume $R$ is Gorenstein of dimension one, and let $M \in \CM(R)$ and $h \in \End_R M$. Then $h$ is stably zero if and only if $\trace(gh \otimes Q) \in R$  for all $g \in \End_R M$. (Recall the definition of trace, Definition~\ref{defn:trace}.) \end{lem}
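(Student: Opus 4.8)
The plan is to identify the stably zero maps $M\to\nu_RM=M^{**}\cong M$ via Lemma~\ref{lem:proof of these facts}(2), and then translate the condition ``$h$ is in the image of $\lambda_R\circ D_R\mu_M$'' into a trace condition using the exact sequence~\eqref{half-exact sequence}. Concretely, take $X=M$ in the diagram~\eqref{dgm:D nu}. Then $h\in\End_RM$, identified with an element of $\Hom_R(M,\nu_RM)$ via $M\cong M^{**}$, is stably zero if and only if it lies in the image of $\lambda_R\circ D_R\mu_M$, which by exactness of~\eqref{half-exact sequence} happens if and only if its image under $q_*(\lambda_Q\circ D_Q\mu_M)^{-1}\iota_*$ vanishes in $D_E\Hom_R(M,M)=\Hom_R(\Hom_R(M,M),Q/R)$.

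The key computation is then to unwind what that composite map does. First I would use~\eqref{lambda D mu} to describe $\lambda_R\circ D_R\mu_M$ explicitly: an element $\sigma\in D_R\Hom_R(M,M)$ is sent to the map $x\mapsto(f\mapsto\sigma(\mu_M(f\otimes x)))$. So the condition that $h$ (viewed in $\Hom_R(M,M^{**})$) be in this image is that there exists $\sigma\colon\End_RM\to R$ with $f(h(x))=\sigma(f\otimes x\mapsto(m\mapsto f(m)x))$ for all $x\in M$, $f\in M^*$ — in other words, $\sigma$ applied to the rank-one endomorphism $\mu_M(f\otimes x)$ recovers $f(h(x))$. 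Over $Q$, since $M\otimes_RQ$ is projective and $D_Q\mu_M$ is an isomorphism, such a $Q$-linear $\sigma_Q\colon\End_Q(M_Q)\to Q$ always exists and is unique; one checks directly from the definition of trace (Definition~\ref{defn:trace}) that $\sigma_Q(g)=\trace(g\,h\otimes Q)$ for $g\in\End_Q(M_Q)$, because $\trace$ is the unique $Q$-linear functional with $\trace(\mu_{M_Q}(\varphi\otimes x))=\varphi(x)$ and symmetry gives $\trace(\mu(\varphi\otimes x)\cdot(h\otimes Q))=\trace(\mu(\varphi\otimes h(x)))=\varphi(h(x))$. Therefore the obstruction $q_*(\lambda_Q\circ D_Q\mu_M)^{-1}\iota_*(h)$ is precisely the class in $\Hom_R(\End_RM,Q/R)$ of the functional $g\mapsto\trace(gh\otimes Q)$. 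This class is zero exactly when $\trace(gh\otimes Q)\in R$ for every $g\in\End_RM$, which is the claimed criterion.

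The main obstacle I expect is the bookkeeping in pinning down exactly which functional on $\End_RM$ the composite $q_*(\lambda_Q\circ D_Q\mu_M)^{-1}\iota_*$ produces, and verifying that it agrees on the nose with $g\mapsto\trace(gh\otimes Q)$ rather than some twist of it — this requires carefully tracking the identification $M\cong M^{**}$ (which enters through $\nu_RM$), the adjunction isomorphism $\lambda_Q$, and the inverse of $D_Q\mu_M$, all of which are only implicitly described. The cleanest way to handle this is to avoid inverting $D_Q\mu_M$ abstractly: instead, given $h$, define $\sigma_Q\in D_Q\Hom_R(M,M)$ by $\sigma_Q(g)=\trace(g\,(h\otimes Q))$, check via~\eqref{lambda D mu} and the defining property of trace that $\lambda_Q\circ D_Q\mu_M$ carries $\sigma_Q$ to $\iota_*(h)$ (both sides sent to the map $x\mapsto(f\mapsto f(h(x)))$, using symmetry of trace), and then note that $q_*(\sigma_Q)=0$ iff $\sigma_Q$ lifts to $D_R\Hom_R(M,M)$ iff $\trace(gh\otimes Q)\in R$ for all $g$. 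Combined with part (2) of Lemma~\ref{lem:proof of these facts} identifying the image of $\lambda_R\circ D_R\mu_M$ with the stably zero maps, this finishes both directions at once.
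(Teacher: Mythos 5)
Your proposal is correct and follows essentially the same route as the paper's proof: both reduce to Lemma~\ref{lem:proof of these facts}, define the candidate functional $\sigma(g)=\trace(gh\otimes Q)$ directly, and verify $(\lambda_Q\circ D_Q\mu_M)(\sigma)=\iota_*(h)$ by evaluating on the rank-one endomorphisms $\mu_M(f\otimes m)$ and using the symmetry of trace, thereby avoiding any abstract inversion of $D_Q\mu_M$. No gaps.
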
 \begin{proof} 

 Let $\eta$ denote the isomorphism $\End_R M \to \Hom_R(M,M^{**})$ induced by $M \cong M^{**}$, and let $\theta$ denote $(\lambda_Q \circ D_Q\mu_M)^{-1} \circ  \iota_*: \Hom_R(M,M^{**}) \to \Hom_R( \End_R M,Q)$.  It follows from Lemma~\ref{lem:proof of these facts} that   $h$ is stably zero if and only if $[\theta (\eta  h)](g) \in R$ for all $g \colon M \to M$. Thus, by the symmetry of trace, it suffices to show that $[\theta (\eta  h)](g)=\trace(hg \otimes Q)$.  Let $\sigma: \End_R M \to Q$ denote the map sending $g \in \End_R M$ to $\trace(hg \otimes Q)$. Thus, we wish to show $\theta(\eta h)=\sigma$; equivalently, $\iota_*(\eta  h)=(\lambda_Q \circ D_Q \mu_M)(\sigma)$. 
 
  Take a   finite collection of maps $\{\phi_i: M \otimes_R Q \to Q\}_i$ and elements $\{e_i\}_i \in M \otimes_R Q$, such that $w=\sum_i \phi_i(w) e_i$ for all $w \in M \otimes_R Q$; thus $\trace(h\otimes Q)=\sum_i \phi_i((h \otimes Q)(e_i))$, as we mentioned in Definition~\ref{defn:trace}.  Given $m \in M$, and $f \in M^*$, let $g$ denote the endomorphism $\mu_M(f \otimes m)$. By equation~\ref{lambda D mu}, $(\lambda_Q \circ D_Q \mu_M)(\sigma )(m)(f)=\sigma(g)=\trace(hg \otimes Q)=\sum_i \phi_i((h\otimes Q)((g \otimes Q)(e_i)))$. Now using firstly that $g \otimes Q$ is given by $w \mapsto (f \otimes Q)(w) m$, and then that $f \otimes Q$ and the $\phi_i$'s have output in $Q$, we have 
 \begin{center}$(\lambda_Q \circ D_Q \mu_M)(\sigma )(m)(f) =\sum_i \phi_i((h\otimes Q) ((f\otimes Q)(e_i)m))=\sum_i (f \otimes Q)(e_i) \phi_i(h (m))$ 
 $=(f\otimes Q)( \sum_i \phi_i (h (m)) e_i )=f(h(m))=\iota_*(\eta  h)(m)(f)$. \end{center} \qed \end{proof}

   \section{Our main result.} \label{sec:main}
     Our goal now is to prove Theorem~\ref{main theorem}, which is really a formula for the AR sequence beginning at $M$. 
     For this section, let $R$ be a one-dimensional  Gorenstein ring which is either a complete local ring or a connected graded ring.  Let $\p$ be a minimal prime of $R$ and let $R'=R/\p$. Set $Q=Q(R)$, $Q'=Q(R')$, and $\m=\m_R$. 

Recall that for a commutative ring $A$, if $M$ and $N$ are finitely generated $A$-submodules of $Q(A)$, and $M$ contains a faithful element $w$, i.e. $\Ann_A(w)=0$, then $\Hom_A(M,N)$ is naturally identified with $(N:_{Q(A)} M)=\{q \in Q(A)| qM \subseteq N\}$. Essentially Theorem~\ref{main theorem} will assert that the AR sequence beginning at any $M$ supported at $\p$ is induced by an element $\gamma \in \End_R \m$ which may be found by the following recipe: Pick $\gamma' \in (R':_{Q'} \J(\Rpb)) \setminus R'$, lift $\gamma'$ from $Q'$ to $\gpt \in Q$, and find $z \in R$ such that 
      $\p =\Ann_R (z)$ and $z \gpt \notin R$; finally, set $\gamma=z \gpt$. We  first show that these steps for finding $\gamma$ are well-defined.

     \notation  Let $\F(R')$ denote $(R':_{Q'} \J(\Rpb))$. 
     
     \begin{lem} \label{gamma' exists} We have 
     $\F(R') \subseteq \Hom_{R'}(\m_{R'},R')$, while $\F(R') \nsubseteq R'$.
       \end{lem}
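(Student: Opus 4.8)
Recall the setup: $R'=R/\p$ is a one-dimensional local (or connected graded) domain, $Q'=Q(R')$ its fraction field, $\Rpb$ its integral closure in $Q'$, and $\F(R')=(R':_{Q'}\J(\Rpb))$. Since $R'$ is a one-dimensional domain, its integral closure $\Rpb$ is a semilocal Dedekind domain (a PID in the complete case, a direct product of DVRs after localization in general), finitely generated as an $R'$-module, and $\J(\Rpb)$ is a nonzero ideal of $\Rpb$ containing a nonzerodivisor; also $\m_{R'}\Rpb \subseteq \J(\Rpb)$ since $\Rpb$ is integral over $R'$, and $\m_{R'}^N \subseteq \m_{R'}\Rpb$ is impossible but $\J(\Rpb)^N \cap R' \subseteq \m_{R'}$ for large $N$ holds because $\Rpb/R'$ has finite length. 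I will prove the two assertions separately.

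\textbf{Step 1: $\F(R') \subseteq \Hom_{R'}(\m_{R'},R')$.} Since $R'$ is a one-dimensional domain, $\m_{R'}$ is a nonzero ideal containing a faithful (in fact nonzerodivisor) element, so by the identification recalled just before the lemma, $\Hom_{R'}(\m_{R'},R') = (R' :_{Q'} \m_{R'})$. So I must show every $\gamma' \in Q'$ with $\gamma'\,\J(\Rpb) \subseteq R'$ satisfies $\gamma'\,\m_{R'} \subseteq R'$. The key point is the containment $\m_{R'} \subseteq \J(\Rpb)$: indeed $\m_{R'}\Rpb$ is a proper ideal of $\Rpb$ (as $\Rpb$ is module-finite over $R'$, Nakayama prevents $\m_{R'}\Rpb = \Rpb$), hence lies in every maximal ideal of $\Rpb$, i.e. in $\J(\Rpb)$; and $\m_{R'} \subseteq \m_{R'}\Rpb$. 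Therefore $\gamma'\,\m_{R'} \subseteq \gamma'\,\J(\Rpb) \subseteq R'$, giving $\gamma' \in (R':_{Q'}\m_{R'}) = \Hom_{R'}(\m_{R'},R')$.

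\textbf{Step 2: $\F(R') \nsubseteq R'$.} Equivalently, I must produce $\gamma' \in Q'$ with $\gamma'\,\J(\Rpb) \subseteq R'$ but $\gamma' \notin R'$. If $R'=\Rpb$ is already normal, then $R'$ is a DVR (complete case) or connected graded normal domain of dimension one; pick a uniformizer / a nonzero element $t$ of $\J(\Rpb)=\m_{R'}$ and take $\gamma' = t^{-1}$ — then $\gamma'\,\m_{R'} = R' \subseteq R'$ but $\gamma' \notin R'$ (and in the graded case one checks degrees work out, or more robustly uses that $\m_{R'}^{-1} = (R':_{Q'}\m_{R'}) \supsetneq R'$ for any one-dimensional Gorenstein, indeed any one-dimensional non-regular-or-regular local ring, by duality: $\ell(\m_{R'}^{-1}/R') = \ell(R'/\m_{R'}) = 1$). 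More uniformly: since $R'$ is Gorenstein of dimension one, the conductor duality / the fact that $(R':_{Q'}\m_{R'})/R'$ has length equal to $\dim_k \soc(R'/(\text{param})) = 1$ (type one) shows $\m_{R'}^{-1} \supsetneq R'$; and by Step 1's inclusion $\m_{R'} \subseteq \J(\Rpb)$ we get $\J(\Rpb)^{-1} := (R':_{Q'}\J(\Rpb)) \subseteq \m_{R'}^{-1}$, so this is not quite enough directly — instead I go the other way. The cleanest argument: $\J(\Rpb) \supseteq \m_{R'}\Rpb$, and $\m_{R'}\Rpb$ contains an element $c$ that is a nonzerodivisor; the conductor $\mathfrak{c} = (R' :_{R'} \Rpb)$ is a nonzero ideal, and since $\Rpb \cdot \mathfrak{c} = \mathfrak{c} \subseteq R'$ we have $\mathfrak{c} \subseteq (R' :_{Q'} \Rpb) \subseteq (R':_{Q'}\J(\Rpb)) = \F(R')$. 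So it suffices to find a nonzero element of $\mathfrak{c}$, equivalently an element of $\F(R')$, lying outside $R'$. If $R' \ne \Rpb$, then $\Rpb \nsubseteq R'$ yet $\mathfrak{c}\,\Rpb \subseteq R'$, so any $a \in \mathfrak{c}\setminus\{0\}$ and $b \in \Rpb \setminus R'$ give $ab \in R'$ while $b \notin R'$ — but I need the multiplier itself outside $R'$. Reconsider: take $b \in \Rpb \setminus R'$; then $b\,\J(\Rpb) \subseteq \Rpb$ but I need $\subseteq R'$. Choose instead $b \in \Rpb$ with $b\,\Rpb \not\subseteq R'$ is automatic; to land in $R'$ use a high power of $\J$: there is $N$ with $\J(\Rpb)^N \subseteq \mathfrak{c}$ (as $\Rpb/\mathfrak{c}\Rpb$... hmm). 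I will organize Step 2 as: (i) reduce to $R' \subsetneq \Rpb$ (the normal case handled by the length-one/type-one duality argument above); (ii) pick the largest $n\ge 1$ with $\J(\Rpb)^{\,n} \not\subseteq R'$ — this exists since $\J(\Rpb) \supseteq \m_{R'}$ is not contained in... wait, $\J(\Rpb)\cap R' = \m_{R'} \subseteq R'$, so I must use $\J(\Rpb)$ as an $\Rpb$-ideal, which does contain elements outside $R'$ whenever $\Rpb \ne R'$ (else $\Rpb \subseteq R' + $ nilpotents$=R'$). Since $\J(\Rpb)^{\,m} \subseteq \mathfrak{c} \subseteq R'$ for $m\gg 0$ (as $\Rpb/\mathfrak{c}$ is Artinian with Jacobson radical image nilpotent), such a maximal $n$ exists; pick $\gamma' \in \J(\Rpb)^{\,n}\setminus R'$. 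Then $\gamma' \notin R'$ and $\gamma'\,\J(\Rpb) \subseteq \J(\Rpb)^{\,n+1} \subseteq R'$, so $\gamma' \in \F(R')\setminus R'$. Done.

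\textbf{Main obstacle.} The substance is entirely in Step 2: producing an explicit element of $\F(R')\setminus R'$. The inclusion $\m_{R'}\subseteq \J(\Rpb)$ (integrality + Nakayama) is the load-bearing observation for Step 1 and is reused in Step 2; after that, the normal case needs the Gorenstein/type-one input ($\m_{R'}^{-1} \supsetneq R'$) while the non-normal case needs the "maximal power of $\J(\Rpb)$ not inside $R'$" trick, which relies on $\Rpb$ being module-finite over $R'$ (so that $\mathfrak{c}\ne 0$ and $\J(\Rpb)^m \subseteq R'$ eventually). The only genuine care required is the graded case: one must check the degree bookkeeping, but since $\Rpb$, $\mathfrak{c}$, $\J(\Rpb)$ are all graded (integral closure and conductor of a graded ring are graded, and in the connected graded setting $\Rpb$ is again connected graded module-finite over $R'$), every statement above is homogeneous and the argument goes through verbatim. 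I would spell out Step 1 in full and Step 2 via the two cases, citing standard facts (module-finiteness of normalization for these rings, and type $=1$ for Gorenstein) without reproving them.
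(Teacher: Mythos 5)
Your Step 1 is exactly the paper's argument: $\m_{R'}\subseteq\J(\Rpb)$ (integrality plus Nakayama), hence $\F(R')\subseteq(R':_{Q'}\m_{R'})=\Hom_{R'}(\m_{R'},R')$. Step 2, however, has a genuine gap in the non-normal case. You choose ``the largest $n\ge 1$ with $\J(\Rpb)^n\nsubseteq R'$'' and justify its existence by asserting that $\J(\Rpb)$ must contain an element outside $R'$ whenever $\Rpb\neq R'$. That assertion is false when the residue field extension $\Rpb/\J(\Rpb)\supseteq R'/\m_{R'}$ is nontrivial: $\Rpb$ is the union of its units and $\J(\Rpb)$, so $\J(\Rpb)\subseteq R'$ does not force $\Rpb\subseteq R'$ (and your parenthetical ``else $\Rpb\subseteq R'+\text{nilpotents}$'' does not apply, as $\Rpb$ is a domain). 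Concretely, take $R=R'=\mathbb{R}[[u,v]]/(u^2+v^2)$, a complete local Gorenstein domain of dimension one; via $u\mapsto x$, $v\mapsto ix$ one identifies $R'$ with $\mathbb{R}+x\mathbb{C}[[x]]$ and $\Rpb$ with $\mathbb{C}[[x]]$, so $\J(\Rpb)=x\mathbb{C}[[x]]=\m_{R'}\subseteq R'$ while $\Rpb\neq R'$. Here no $n\ge1$ with $\J(\Rpb)^n\nsubseteq R'$ exists, so your recipe produces nothing---although the lemma is still true (e.g.\ $i\in\F(R')\setminus R'$). Nothing in the lemma's hypotheses excludes such residue field extensions.

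The repair is small: allow $n=0$. When $R'\neq\Rpb$ one has $\J(\Rpb)^0=\Rpb\nsubseteq R'$, and $\J(\Rpb)^m$ lies in the conductor (hence in $R'$) for $m\gg0$ as you argue, so a maximal $n\ge0$ with $\J(\Rpb)^n\nsubseteq R'$ exists; any $\gamma'\in\J(\Rpb)^n\setminus R'$ then satisfies $\gamma'\,\J(\Rpb)\subseteq\J(\Rpb)^{n+1}\subseteq R'$. With that fix your route works and is mildly different from the paper's, which avoids the case split altogether: it uses that $\Rpb$ is a DVR in the complete case and a polynomial ring over a field in the graded case (the appendix lemma), writes $\J(\Rpb)=\pi\Rpb$ and the conductor as $\pi^n\Rpb$, and observes that $\pi^{n-1}\Rpb\subseteq\F(R')$ while $\pi^{n-1}\Rpb\nsubseteq R'$. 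Your invocation of $R'$ being Gorenstein of type one in the normal case is unnecessary (and $R'=R/\p$ need not be Gorenstein in general, though in the normal case it is regular, so no harm results there); the direct uniformizer argument you also give suffices.
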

 
 \begin{proof} 
 Let $D=R'$. As $\m_D \subseteq \J(\Dbar)$, we have $\F(D) \subseteq \Hom_D(\m_D,D)$.  
  In the complete local case $\Dbar$ is a DVR, while in the connected graded case $\Dbar$ is  a polynomial ring over a field by Lemma~\ref{lem:polynomial ring}. Let $\pi$ denote a  generator  for $\m_{\Dbar}$, and let $n$  be the positive integer such that the conductor ideal  $(D:_D \Dbar)$ equals $\pi^n \Dbar$. It is clear that $\pi^{n-1}\Dbar \subseteq \F(D)$ and $\pi^{n-1}\Dbar \nsubseteq D$.
  \qed \end{proof}

 \begin{lem} \label{Q and Q'} We have an $R$-algebra isomorphism $Q/ \p Q \cong Q'$. \end{lem}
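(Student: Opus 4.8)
The plan is to realize the claimed isomorphism as the evident ``reduction mod $\p$'' map and then reduce everything to a single localization-theoretic point. Write $S$ for the set of (homogeneous, in the graded case) nonzerodivisors of $R$, so that $Q=S^{-1}R$ by definition. Since $\p$ is a minimal prime of $R$ it is an associated prime, so every element of $\p$ is a zerodivisor and hence $S\cap\p=\varnothing$; thus the image $\overline{S}$ of $S$ in $R'=R/\p$ is a multiplicative subset of $R'\setminus\{0\}$, consisting (as $R'$ is a domain) of (homogeneous) nonzerodivisors of $R'$. We therefore get a chain of $R$-algebra inclusions $R'\subseteq\overline{S}^{\,-1}R'\subseteq Q(R')=Q'$, all three sitting inside the fraction field of the domain $R'$. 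On the other hand flatness of $Q$ over $R$ gives the canonical $R$-algebra identification $Q/\p Q=S^{-1}R\otimes_R(R/\p)=S^{-1}(R/\p)=\overline{S}^{\,-1}R'$. Hence the lemma reduces to showing that this middle ring equals $Q'$, i.e.\ that every (homogeneous) nonzerodivisor $\overline{t}$ of $R'$ is already invertible in $\overline{S}^{\,-1}R'$.

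To prove that, fix such a $\overline{t}$; I may assume it is a nonunit. Since $\overline{t}\neq0$ in the one-dimensional Noetherian domain $R'$, the cyclic module $R'/\overline{t}R'$ has Krull dimension zero, hence finite length; so $\m_{R'}^{\,M}\subseteq\overline{t}R'$ for some $M$ in the complete local case, while $(\overline{t}R')_e=R'_e$ for all $e$ past some bound in the connected graded case. Next pick $s_0\in S\cap\m_R$: in the local case $\depth R=1$ supplies a nonzerodivisor in $\m_R$, and in the graded case a homogeneous system of parameters is a single homogeneous element of positive degree, which is a nonzerodivisor because $R$ is Cohen-Macaulay of dimension one. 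Then $\overline{s_0}\in\m_{R'}$, so for $N$ large the element $\overline{s_0}^{\,N}=\overline{s_0^{\,N}}$ lies in $\overline{t}R'$ --- in the local case take $N=M$; in the graded case $\overline{s_0}^{\,N}$ has arbitrarily large degree. Thus $\overline{t}$ divides $\overline{s_0^{\,N}}\in\overline{S}$ inside $R'$, so $\overline{t}$ becomes a unit of $\overline{S}^{\,-1}R'$ (with inverse $\big(\overline{s_0^{\,N}}/\overline{t}\,\big)\cdot\overline{s_0^{\,N}}^{\,-1}$), completing the reduction.

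The one real obstacle is exactly this last equality $\overline{S}^{\,-1}R'=Q'$. The naive approach would be to show $\overline{S}$ already contains every (homogeneous) nonzerodivisor of $R'$, i.e.\ that such an element lifts to a nonzerodivisor of $R$; but this can fail, because every lift to $R$ may be absorbed into one of the other minimal primes of $R$. The finite-colength argument above circumvents this: instead of matching $\overline{t}$ with a single lifted nonzerodivisor, one only needs $\overline{t}$ to divide a power of one fixed nonzerodivisor $s_0\in\m_R$, and zero-dimensionality of $R'/\overline{t}R'$ delivers that. (Alternatively, in the complete local case a coset prime-avoidance argument shows $\overline{S}=R'\setminus\{0\}$ directly, but the length argument handles both cases uniformly and avoids graded prime avoidance.)
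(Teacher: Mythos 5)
Your proposal is correct and follows essentially the same route as the paper: the paper first observes that $Q=R_x$ for any single (homogeneous) nonzerodivisor $x$, because any other nonzerodivisor $y$ generates an $\m$-primary ideal and hence divides a power of $x$, and then identifies $Q/\p Q=R'_x=Q'$ by the same finite-colength divisibility argument you use for $\overline{t}$ and $\overline{s_0}$. The only cosmetic difference is that you localize at the full set $\overline{S}$ rather than at one element, but the key step is identical.
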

 
 \begin{proof} If $x \in R$  is a (homogeneous) nonzerodivisor, we have $R_x=Q$. To see this, it suffices to check that  a given (homogeneous)  nonzerodivisor $y \in R$ becomes a unit in $R_x$. As $Ry$ is $\m$-primary, we have $x^i=ry$ for some $i \ge 1$ and some $r \in R$. Therefore $y$ is a unit in $R_x$; hence $R_x=Q$.  Now $Q/ \p Q=Q \otimes_R R'= R_x \otimes_R R'=R'_x$, which equals $Q'$ by the same argument as above. \qed \end{proof}

     
     \begin{lem} \label{z exists} Let $\gamma' $ be a (homogeneous) element of $\F(R') \setminus R'$ (which exists by Lemma~\ref{gamma' exists}) and let $\gpt$ be a lift of $\gamma'$ to $Q$ (see Lemma~\ref{Q and Q'}). Then there exists (homogeneous) $z \in R$ such that $\p=\Ann_R(z)$ and $\gpt z \notin R$. 
     \end{lem}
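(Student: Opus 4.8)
The plan is to reduce the lemma to the assertion that $\gpt$ does not carry the ideal $\mathfrak a:=\Ann_R(\p)=(0:_R\p)$ into $R$, and then to derive a contradiction from the assumption $\gpt\mathfrak a\subseteq R$ by passing to the localization at $\p$. Only the condition $\gamma'\notin R'$ will be used; the sharper hypothesis $\gamma'\in\F(R')$ is what forces $z\gpt$ to lie in $\End_R\m$ later, but it plays no role here.

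I would begin by analysing $\mathfrak a$. Since $R$ is Cohen--Macaulay it has no embedded primes, so $\p$ is an associated prime of $R$; hence $\mathfrak a\ne0$ and, in the graded case, $\p$ is the annihilator of a homogeneous element of $\mathfrak a$. Identify $\mathfrak a=\Hom_R(R',R)$ via $\varphi\mapsto\varphi(1)$, note that $\p\mathfrak a=0$ so that $\mathfrak a\otimes_R R'=\mathfrak a$, and observe that $R'$ is a maximal Cohen--Macaulay $R$-module (a one-dimensional domain with $\depth_R R'=1=\dim R$). Because $R$ is one-dimensional Cohen--Macaulay, $Q=Q(R)$ is a finite product $\prod_{\q\in\min R}R_\q$ of Artinian local rings and $R\hookrightarrow Q$. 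For $z\in\mathfrak a$ and $\q\ne\p$, an element of $\p\setminus\q$ is a unit of $R_\q$ annihilating $z$, so the image of $z$ in $Q$ is concentrated in the factor $R_\p$, lies in $\Ann_{R_\p}(\p R_\p)=\soc(R_\p)$, and is zero iff $z=0$. As $R_\p$ is Gorenstein Artinian, $\soc(R_\p)$ is one-dimensional over $R_\p/\p R_\p=Q(R')=Q'$, so $z\mapsto z/1$ gives an injection of $R'$-modules $\mathfrak a\hookrightarrow Q'$; write $I$ for its image, a nonzero finitely generated $R'$-submodule of $Q'$ with $I\cong\mathfrak a$. This injectivity yields $S:=\{z\in R:\Ann_R(z)=\p\}=\mathfrak a\setminus\{0\}$: for $0\ne z\in\mathfrak a$ an element of $\Ann_R(z)\setminus\p$ would annihilate $z/1$ in $R_\p$ while being a unit there, forcing $z=0$. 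Consequently, setting $\mathfrak b:=\{z\in\mathfrak a:\gpt z\in R\}$ — an $R$-submodule of $\mathfrak a$ containing $0$, and graded once $\gpt$ is taken homogeneous (legitimate, since $Q\to Q'$ is a graded surjection) — it suffices to show $\mathfrak b\subsetneq\mathfrak a$, for then any (homogeneous) $z\in\mathfrak a\setminus\mathfrak b$ lies in $S$ and has $\gpt z\notin R$.

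So suppose for contradiction that $\gpt\mathfrak a\subseteq R$. Since $\p\cdot\gpt\mathfrak a=\gpt(\p\mathfrak a)=0$, in fact $\gpt\mathfrak a\subseteq\Ann_R(\p)=\mathfrak a$. Projecting $Q\to R_\p$, the image of $\gpt$ acts on $\soc(R_\p)$ by multiplication by its class in $R_\p/\p R_\p=Q/\p Q=Q'$, which is $\gamma'$, so $\gpt\mathfrak a\subseteq\mathfrak a$ becomes $\gamma'I\subseteq I$ inside $Q'$. Now, $I$ being a nonzero finitely generated submodule of the field $Q'$, multiplication identifies $\{q\in Q':qI\subseteq I\}$ with $\End_{R'}(I)$ as $R'$-modules, and $\End_{R'}(I)\cong\End_{R'}(\mathfrak a)=\Hom_R(\mathfrak a,\Hom_R(R',R))\cong\Hom_R(\mathfrak a\otimes_R R',R)=\Hom_R(\mathfrak a,R)=\Hom_R(\Hom_R(R',R),R)\cong R'$, the last isomorphism being the reflexivity of the maximal Cohen--Macaulay module $R'$ over the Gorenstein ring $R$. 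Hence $M:=\{q\in Q':qI\subseteq I\}$ is a subring of $Q'$ that is free of rank one over $R'$ and contains $R'$; writing $M=R'w$, the relation $1\in M$ forces $w^{-1}\in R'$ and $w^2\in M$ forces $w\in R'$, so $M=R'$. Therefore $\gamma'\in R'$, contradicting $\gamma'\notin R'$.

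The step I expect to be the genuine obstacle is this last one, the identification $\{q\in Q':qI\subseteq I\}=R'$. Its subtlety is that $R'=R/\p$ is in general neither normal nor Gorenstein, so $\gamma'$ may be integral over $R'$ and yet lie outside $R'$ — indeed $\Rpb\cap\F(R')$ can strictly contain $R'$ — so no normality-type estimate can close the argument. What does close it is the rigidity of the canonical module: $\mathfrak a=\Hom_R(R',R)$ is, up to isomorphism, a canonical module of $R'$, and the canonical module of a Cohen--Macaulay ring has endomorphism ring equal to the ring itself. The only other care needed is the graded bookkeeping, which comes down to choosing $\gpt$ homogeneous so that $\mathfrak a$ and $\mathfrak b$ are graded.
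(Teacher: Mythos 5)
Your argument is correct and is essentially the paper's proof: both reduce the lemma to showing that $\gpt$ cannot stabilize $\omega=\Ann_R(\p)$, and both derive the contradiction $\gamma'\in R'$ from the fact that $\omega\cong\Hom_R(R',R)$ is a canonical module for $R'$ with $\End_{R'}(\omega)\cong R'$ (which the paper cites from Bruns--Herzog and you rederive via adjunction and reflexivity of $R'$ over the Gorenstein ring $R$). The detour through $\soc(R_\p)$ and fractional ideals of $R'$ is harmless but unnecessary; the paper works directly with $\omega\subset R\subset Q$ and reads off $\gpt-r\in\Ann_Q(\omega)=\p Q$.
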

     \begin{proof}  Let $\omega$ denote the ideal $\Ann_R (\p)$.  Now $\omega \cong \Hom_R(R',R)$ is, up to a graded shift, a canonical module for $R'$ (\cite[Theorem 3.3.7]{BH} and \cite[proposition 3.6.12]{BH}), and therefore we have $\End_{R'}\omega\cong R'$ (cf. \cite[Theorem 3.3.4]{BH} and the proof of \cite[proposition 3.6.9b]{BH}). 
     We will also use that $\p=\Ann_R(\omega)=\Ann_R(z)$ for each nonzero $z \in \omega$, which is true because all associated primes of $R$ are minimal, so that any ideal strictly larger than $\p$ contains a nonzerodivisor.
     
 Regarding $\omega$ as a subset of $Q$ via $\omega \subset R \subset Q$, suppose that $\gpt \omega \subseteq \omega$. Then the action of $\gpt$ on $\omega$ agrees with the multiplication on $\omega$ by some  $r\in R$, so $\gpt-r \in \Ann_Q(\omega)=\p Q$. But then $\gamma' \in R'$ is a contradiction. So there must exist $z \in \omega$ such that $\gpt z \notin \omega$. As $\Ann_Q (\p) \cap R=\omega$, we thus have $\gpt z \notin R$. 
     \qed \end{proof}

   \begin{lem} \label{Frobenial} For $\gpt \in Q$ and $z \in R$  as in  Lemma~\ref{z exists}, we have $z \gpt \in  \End_R \m$. \end{lem}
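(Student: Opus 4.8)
I would prove the lemma by checking directly that multiplication by $\gamma := z\gpt$ maps $\m$ into $\m$. Since $\depth R = 1$, the ideal $\m$ contains a nonzerodivisor, hence a faithful element, so $\End_R\m$ is identified with $\{q \in Q : q\m \subseteq \m\}$; as $\gamma$ already lies in $Q$, it suffices to prove $\gamma\m \subseteq \m$. I would also record at the start that we may assume $\m$ is not principal: otherwise $R$ is a DVR (or a polynomial ring $k[x]$ in the connected graded case), and then $L_p(R)$ contains no nonfree module, so there is nothing of substance in this or the subsequent sections.

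The first step is to show $\gamma\m \subseteq R$. Fix $m \in \m$ and pass to the image of $\gpt m$ under the surjection $Q \onto Q/\p Q$, which is identified with $Q' = Q(R')$ by Lemma~\ref{Q and Q'}. This surjection sends $\gpt$ to $\gamma'$ (the defining property of the lift) and $m$ to its image $\bar m \in \m_{R'}$, hence $\gpt m$ to $\gamma'\bar m$; and $\gamma'\bar m \in R'$ because $\gamma' \in \F(R') \subseteq \Hom_{R'}(\m_{R'},R')$ by Lemma~\ref{gamma' exists}. Since $R' = R/\p$ is exactly the image of $R$ in $Q/\p Q$, it follows that $\gpt m \in R + \p Q$. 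Now multiply by $z$: because $z \in \Ann_R(\p)$ we have $z\p = 0$, hence $z\,\p Q = 0$, and therefore $\gamma m = z\gpt m \in zR + z\,\p Q = zR \subseteq R$. (This actually gives $\gamma\m \subseteq zR$, but only $\gamma\m \subseteq R$ will be used.)

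For the second step I would argue by contradiction, using that $\m$ is not principal. If $\gamma\m \not\subseteq \m$ then, since $\gamma\m \subseteq R$, there is $m_0 \in \m$ (homogeneous, in the graded case) with $\gamma m_0 = u$ a unit of $R$. Then $m_0$ becomes a unit of $Q$, so $m_0$ is a nonzerodivisor and $\gamma^{-1} = u^{-1}m_0 \in Q$; substituting this back into $\gamma\m \subseteq R$ gives $\m \subseteq \gamma^{-1}R = m_0 R$, and as $m_0 \in \m$ this forces $\m = m_0 R$, a principal ideal — contrary to our assumption. Hence $\gamma\m \subseteq \m$, which is the assertion.

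The computation is short; the delicate step is the reduction modulo $\p$ in the middle paragraph, where one must note that the lift $\gpt$ of $\gamma'$ is only determined modulo $\p Q$ and that this ambiguity is annihilated by $z$, precisely because $z \in \Ann_R(\p)$ — which is the reason $z$ is produced in Lemma~\ref{z exists}. (The standard fact that a (graded-)local ring with principal maximal ideal generated by a nonzerodivisor is a DVR, used to dispose of the principal case, completes the picture.)
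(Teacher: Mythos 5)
Your proof is correct and follows essentially the same route as the paper's: first $\gamma\m \subseteq zR \subseteq R$ by reducing modulo $\p$ (the paper compresses this to ``$\gamma'\m R' \subseteq R'$ and $zR \cong R'$,'' while you spell out that the ambiguity $\p Q$ in the lift $\gpt$ is killed by $z$), and then the observation that the image must in fact land in $\m$. For that second step the paper notes that $\m$ has no free direct summand (so $\m^* = \End_R\m$), whereas you show directly that a unit value $\gamma m_0$ would force $\m = m_0R$; both versions rest on the same tacit assumption that $R$ is not regular --- without it the statement literally fails (e.g.\ $\gamma = \pi^{-1}$ over a DVR) --- and you are in fact more explicit than the paper in flagging and disposing of that degenerate case.
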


\begin{proof}   From Lemma~\ref{gamma' exists} we have $\gamma' \m R' \subseteq  R'$.  Since $z  R \cong R'$, it follows that $\gpt z  \m \subseteq z R$, thus $z \gpt \in \m^*$. It remains to observe that $\m$ has no free direct summand. But any proper direct summand of an ideal  has nonzero annihilator; and if $\m$ were free $R$ would be regular. \qed \end{proof}  
      
We have now shown that the steps for for finding $\gamma$, given in the introduction of this section, are well-defined. We need one more preliminary lemma.
      
      \begin{lem} \label{base change of trace} Let $M \in L_p(R)$, and $h \in \End_R M$. Then $\trace(h \otimes Q')=\trace(h \otimes Q) +\p Q$. \end{lem}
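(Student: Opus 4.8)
The plan is to reduce everything to a statement about traces of matrices that becomes transparent once the right bases are chosen. Recall from Lemma~\ref{Q and Q'} that $Q' \cong Q/\p Q$, so the claim ``$\trace(h \otimes Q') = \trace(h \otimes Q) + \p Q$'' means precisely that the trace, which a priori lives in $Q$, has its image in $Q' = Q/\p Q$ equal to the trace computed after the base change $Q \to Q'$. Since $M \in L_p(R)$, by Lemma~\ref{L_p=Q-projective} the module $M \otimes_R Q$ is a projective $Q$-module, and $M \otimes_R Q' = (M \otimes_R Q) \otimes_Q Q'$ is a projective $Q'$-module. So we are comparing the trace of an endomorphism of a finitely generated projective module over $Q$ with the trace of its base change along the ring map $Q \to Q'$.

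The key step is a general compatibility fact: for any surjection of commutative rings $A \twoheadrightarrow \bar A$, any finitely generated projective $A$-module $P$, and any $f \in \End_A P$, one has $\trace_{\bar A}(f \otimes_A \bar A) = \overline{\trace_A(f)}$, the image of $\trace_A(f)$ in $\bar A$. This follows directly from the dual-basis formula in Definition~\ref{defn:trace}: if $\mu_P(\sum_i \phi_i \otimes e_i) = \id_P$, then tensoring with $\bar A$ gives $\mu_{P \otimes \bar A}(\sum_i (\phi_i \otimes \bar A) \otimes (e_i \otimes 1)) = \id_{P \otimes \bar A}$, so $\trace_{\bar A}(f \otimes \bar A) = \sum_i (\phi_i \otimes \bar A)((f \otimes \bar A)(e_i \otimes 1)) = \sum_i \overline{\phi_i(f(e_i))} = \overline{\trace_A(f)}$, using that base change commutes with the evaluation map $\epsilon$. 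Applying this with $A = Q$, $\bar A = Q' = Q/\p Q$, $P = M \otimes_R Q$, and $f = h \otimes_R Q$ yields exactly $\trace((h \otimes_R Q) \otimes_Q Q') = \trace(h \otimes_R Q) + \p Q$. Finally, $(h \otimes_R Q) \otimes_Q Q' = h \otimes_R Q'$ by associativity of tensor product, so the left side is $\trace(h \otimes Q')$ as desired.

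I don't anticipate a serious obstacle here; the only point requiring a little care is making sure the dual-basis data for $M \otimes_R Q$ base-changes correctly and that one is genuinely working with finitely generated projective modules on both sides so that Definition~\ref{defn:trace} applies verbatim — both of which are guaranteed by $M \in L_p(R)$ via Lemma~\ref{L_p=Q-projective}. One could alternatively phrase the argument via Remark~\ref{rmk:trace}, writing $Q$ as a product of fields $R_\p$ (this is legitimate in the ungraded reduced case, and in the graded case one passes to the full quotient ring as in the proof of Lemma~\ref{trace is integral}) and observing that the factor of $Q$ indexed by our chosen minimal prime $\p$ is exactly $Q'$, while the remaining factors comprise $\p Q$; then the trace formula of Remark~\ref{rmk:trace} gives the result componentwise. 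Either route is short, so I would present the dual-basis version as the main line and perhaps remark on the product-of-fields interpretation for intuition.
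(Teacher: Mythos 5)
Your proof is correct and follows essentially the same route as the paper: the paper also takes a dual-basis family $\{\phi_i,e_i\}$ for $M\otimes_R Q$, observes that its image under $Q\to Q'=Q/\p Q$ is a dual-basis family for $M\otimes_R Q'$, and compares the two trace formulas. Your version merely packages this as a general base-change compatibility for traces of endomorphisms of finitely generated projectives, which is a clean way to present the identical computation.
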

      
\begin{proof}  Take $\{\phi_i: M \otimes_R Q \to Q\}_i$ and  $\{e_i\}_i \in M \otimes_R Q$ such that $w=\sum_i \phi_i(w) e_i$ for all $w \in M \otimes_R Q$. If $\phi_i'=\phi \otimes_R R' \colon M \otimes_R Q' \to Q'$ and $e_i'$ denotes the image of $e_i$ in $M \otimes_R Q'$, then $w'=\sum_i \phi_i'(w') e_i'$ for all $w' \in M \otimes_R Q'$. Now $\trace(h\otimes  Q')  = \sum_i\phi'( (h \otimes Q')(e_i'))=\sum_i \phi((h \otimes Q)(e_i))+\p Q=\trace(h \otimes  Q)+\p Q$.
\qed \end{proof}
      
\notation \label{gamma_M} Let $\gamma' $ be a (homogeneous) element of $\F(R') \setminus R'$, let $\gpt$ be a lift of $\gamma'$ to $Q$ and let $z \in R$ (homogeneous)  such that $\p=\Ann_R(z)$ and $\gpt z \notin R$. (Those steps are well-defined by the above lemmas.) Let $\gamma=z \gpt$. Assume $M \in L_p(R)$ has no free direct  summands. Then there exists no surjection $M \to R$, so $M^*=\Hom_R(M,\m)$, hence $M \cong \Hom_R(M, \m)^*$ is a module over the ring $\End_R \m$.  Therefore $\gamma$ induces an endomorphism of $M$, by Lemma~\ref{Frobenial}. Denote it by $\gamma_M$. Denote by $[\gamma_M]$ the class of $\gamma_M$ in $\stEnd_R M$.

\begin{theorem} \label{main theorem} Assume $M \in L_p(R)$ is a nonfree  indecomposable.    Then, using Notation~\ref{gamma_M}, we have $[\gamma_M] \in \soc(\stEnd_RM)$. Moreover, $[\gamma_M] \neq 0$ provided  $\dim_{R'_{\p}} (M \otimes_{R} R'_{\p})$ is a unit in $R$. Thus in this case  $\gamma_M$ induces the AR sequence beginning at $M$ (see Remark~\ref{AR seq as pushout}). \end{theorem}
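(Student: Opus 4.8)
The plan is to establish, in order, that $[\gamma_M]\in\soc(\stEnd_R M)$, that $[\gamma_M]\ne 0$ under the stated hypothesis, and that $\gamma_M$ then gives the AR sequence — all via the trace criterion of Lemma~\ref{trace argument!}, with Lemma~\ref{trace-in-radical} applied over the \emph{domain} $R'$ to bound the relevant traces. The preliminary observation is that $\gamma_M\otimes_R Q\colon M\otimes_R Q\to M\otimes_R Q$ is simply multiplication by $\gamma$, where $\gamma\in\End_R\m\subseteq Q$ (the inclusion coming from the fact that $\m$ contains a nonzerodivisor, as $\depth R=1$). This holds because the construction of $\gamma_M$ through $M\cong\Hom_R(M,\m)^{*}$ gives $f(\gamma_M(m))=\gamma\cdot f(m)$ for $m\in M$ and $f\in M^{*}=\Hom_R(M,\m)$, so after $\otimes_R Q$ every $Q$-linear functional $\varphi$ on $M\otimes_R Q$ satisfies $\varphi\circ(\gamma_M\otimes Q)=\gamma\,\varphi$; and $M\otimes_R Q$ is $Q$-projective (Lemma~\ref{L_p=Q-projective}) hence reflexive, so such functionals separate points. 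Consequently, by $Q$-linearity of trace and centrality of $\gamma$, for every $h'\in\End_R M$ we obtain the identity $\trace\bigl((h'\gamma_M)\otimes Q\bigr)=\gamma\cdot\trace(h'\otimes Q)$.

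\emph{Socle membership.} Since $\End_R M$ is (graded-)local (Remark~\ref{AR seq as pushout}) and module-finite over $R$, every non-unit of $\End_R M$ has a power in $\m_R\End_R M$, and $\J(\stEnd_R M)$ is the image of $\J(\End_R M)$; so it suffices to show $g_0\gamma_M$ is stably zero for every non-unit $g_0$. By Lemma~\ref{trace argument!} and the identity above, this reduces (taking $h'=gg_0$, still a non-unit) to: $\gamma\cdot\trace(h'\otimes Q)\in R$ for every non-unit $h'\in\End_R M$. Given such an $h'$, pick $n$ with $(h')^{n}\in\m_R\End_R M$; then $(h'\otimes_R R')^{n}\in\m_{R'}\End_{R'}(M\otimes_R R')$, and since $(M\otimes_R R')\otimes_{R'}Q'=M\otimes_R Q'$ is $Q'$-projective while $R'$ is a reduced one-dimensional complete-local or connected-graded ring, Lemma~\ref{trace-in-radical} gives $\trace(h'\otimes_R Q')\in\J(\Rpb)$. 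By Lemma~\ref{base change of trace} this is precisely the statement that the image of $\trace(h'\otimes_R Q)$ under $Q\onto Q/\p Q\cong Q'$ (Lemma~\ref{Q and Q'}) lies in $\J(\Rpb)$. As $\gamma'\in\F(R')=(R':_{Q'}\J(\Rpb))$, multiplying that image by $\gamma'$ lands in $R'$; lifting back along $Q\onto Q'$ and using that $\gpt$ lifts $\gamma'$ gives $\gpt\cdot\trace(h'\otimes_R Q)\in R+\p Q$, and then multiplying by $z$ and using $z\p=0$ (so $z\cdot\p Q=0$) gives $\gamma\cdot\trace(h'\otimes Q)=z\gpt\cdot\trace(h'\otimes Q)\in zR\subseteq R$, as required. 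Hence $\J(\stEnd_R M)[\gamma_M]=0$, i.e. $[\gamma_M]\in\soc(\stEnd_R M)$.

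\emph{Non-vanishing and conclusion.} Suppose $n_\p:=\dim_{R'_\p}(M\otimes_R R'_\p)$ is a unit in $R$. By Lemma~\ref{trace argument!} with $g=\id_M$ it suffices to show $\trace(\gamma_M\otimes Q)\notin R$; by the identity, $\trace(\gamma_M\otimes Q)=\gamma\cdot\trace(\id_{M\otimes_R Q})$. Now $M\otimes_R Q'$ is free of rank $n_\p$ over $Q'$, so $\trace(\id_{M\otimes_R Q'})=n_\p\cdot 1_{Q'}$, which by Lemma~\ref{base change of trace} is the image of $\trace(\id_{M\otimes_R Q})$ in $Q'$; hence $\trace(\id_{M\otimes_R Q})\in n_\p\cdot 1_R+\p Q$, and multiplying by $\gamma=z\gpt$ (using $z\cdot\p Q=0$) gives $\trace(\gamma_M\otimes Q)=n_\p\gamma$. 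Were this in $R$, then $\gamma=z\gpt\in R$ (as $n_\p$ is a unit), contradicting the choice of $z$ (Lemma~\ref{z exists}). So $[\gamma_M]\ne 0$. Finally, $\stEnd_R M$ is (graded-)local with simple socle as a left module over itself (Remark~\ref{AR seq as pushout}), so the nonzero (homogeneous) element $[\gamma_M]$ of that socle generates it, and Remark~\ref{AR seq as pushout} then identifies the AR sequence beginning at $M$ with the pushout of $0\to M\to F\to\syz_R^{-1}(M)\to 0$ ($F$ free) along $\gamma_M$.

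I expect the main obstacle to be the web of identifications in the socle step: one must descend to the domain $R'$ in order to be allowed to invoke Lemma~\ref{trace-in-radical}, transport the resulting membership $\trace\in\J(\Rpb)$ back up to $Q$ through the base-change-of-trace identity, exploit that $\gamma'$ was manufactured precisely so as to carry $\J(\Rpb)$ into $R'$, and finally use that multiplication by $z$ annihilates $\p Q$ to land genuinely in $R$ — all while keeping the complete-local and connected-graded cases in step. Lesser points are the reflexivity argument showing $\gamma_M\otimes Q$ is multiplication by $\gamma$, and the finiteness fact that every non-unit of $\End_R M$ has a power in $\m_R\End_R M$.
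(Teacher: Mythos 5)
Your proof is correct and follows essentially the same route as the paper's: socle membership via the trace criterion of Lemma~\ref{trace argument!}, descending to the domain $R'$ with Lemma~\ref{base change of trace} so that Lemma~\ref{trace-in-radical} puts $\trace(h'\otimes Q')$ into $\J(\Rpb)$, then using $\gamma'\J(\Rpb)\subseteq R'$ and $z\p=0$ to land in $R$; and non-vanishing via $\trace(\gamma_M\otimes Q)=n\gamma\notin R$. Your explicit verification that $\gamma_M\otimes Q$ is multiplication by $\gamma$ (so that $\trace(h\gamma_M\otimes Q)=\gamma\,\trace(h\otimes Q)$) is a point the paper uses tacitly, and is a welcome addition rather than a divergence.
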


\begin{proof} First we show $[\gamma_M] \in \soc(\stEnd_RM)$, which by Lemma~\ref{trace argument!} is equivalent to having $\trace(\gamma h \otimes Q) \in R$ for an arbitrary nonisomorphism $h \colon M \to M$.  
  As $ \End_RM/ (\m \End_R M)$ is an artinian local ring, there exists  some $i \ge 1$ such that  $h^i \in \m \End_RM$, and thus $h^i \otimes_R R' \in \m \End_{R'} (M')$.  So $\trace (h \otimes Q') \in \J(\overline{R'})$, by Lemma~\ref{trace-in-radical}.  
Now using Lemma~\ref{base change of trace},    $\gpt \trace(h \otimes Q)  + \p Q \in \gamma' \J(\Rpb) \subset R + \p Q$, whence $\gamma \trace(h \otimes Q) \in zR + z\p Q=zR \subset R$. 

Now suppose $n:=\dim_{R'_{\p}} (M \otimes_{R} R'_{\p})$ is a unit in $R$.   We have  $\trace(\gamma_M \otimes Q)=\gamma \trace(1_{M \otimes Q})\in \gamma (n +\p Q)$ by Lemma~\ref{base change of trace}, while $\gamma (n +\p Q)=n \gamma$,  since $\gamma \p=\gpt z \p=0$. Since $\gamma \notin R$ by definition, we have $n \gamma \notin R$, and thus $[\gamma_M] \neq 0$ by Lemma~\ref{trace argument!}. \qed \end{proof}

The following lemma, which we use in Section 6, is for locating $\gamma'$ when $R'$ is Gorenstein.
\begin{lem} \label{a gamma crit} Assume $R'$ is Gorenstein and connected graded, and let $g \in \Hom_{R'}(\m_{R'},R')$ be a homogeneous element, and  $a=\deg g$. If $R'_a=0$, then $g \in \F(R') \setminus R'$. \end{lem}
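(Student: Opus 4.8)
The plan is to show directly that a homogeneous $g \in \Hom_{R'}(\m_{R'},R')$ of degree $a$ with $R'_a = 0$ satisfies $g \in \F(R') \setminus R'$, by interpreting $\Hom_{R'}(\m_{R'},R')$ as a module of quotients. First I would note that since $R'$ is a one-dimensional Gorenstein connected graded domain (domain because $\p$ is minimal, so $R' = R/\p$ is a domain, and $R'$ is reduced), $\m_{R'}$ contains a homogeneous nonzerodivisor $w$, so $\Hom_{R'}(\m_{R'},R')$ is identified with $(R' :_{Q'} \m_{R'}) = \{ q \in Q' \mid q\,\m_{R'} \subseteq R' \}$, and under this identification $g$ corresponds to a homogeneous element $q_g \in Q'$ of degree $a$ with $q_g \,\m_{R'} \subseteq R'$. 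The claim $g \notin R'$ just says $q_g \notin R'$: indeed $R'_a = 0$ forces $q_g \notin R'$ unless $q_g = 0$, and $q_g = 0$ would mean $g$ is the zero map — I should either assume $g \neq 0$ (harmless, since the zero map is certainly not what one wants) or observe that the statement is vacuous/false for $g = 0$ and silently restrict to $g \neq 0$; looking at the intended application I'd phrase the hypothesis so $g$ is a nonzero homogeneous map.

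Next, the substantive part: showing $g \in \F(R') = (R' :_{Q'} \J(\overline{R'}))$, i.e.\ $q_g\, \J(\overline{R'}) \subseteq R'$. By Lemma~\ref{lem:polynomial ring} (invoked in the proof of Lemma~\ref{gamma' exists}), $\overline{R'}$ is a polynomial ring $k'[\pi]$ in one homogeneous variable $\pi$ over a field $k' = R'_0$, say $\deg \pi = d > 0$, and $\J(\overline{R'}) = \pi\,\overline{R'} = \bigoplus_{i \geq d} \overline{R'}_i$. Let $n$ be the positive integer with conductor $(R' :_{R'} \overline{R'}) = \pi^n \overline{R'}$, as in the proof of Lemma~\ref{gamma' exists}; then $\pi^{n-1}\overline{R'} \subseteq \F(R')$. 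The key observation is that $q_g \in (R' :_{Q'} \m_{R'})$ already forces $q_g \in \overline{R'}$ of low degree: multiplying by the bottom piece of $\m_{R'}$, which includes positive-degree elements, one sees $q_g$ is integral over $R'$ (it lands in a fractional ideal that is a finitely generated $R'$-module), hence $q_g \in \overline{R'}$, and since $\deg q_g = a$ with $R'_a = 0$, in fact $q_g \in \overline{R'}_a \setminus R'_a = \overline{R'}_a$. The degree constraint I want to extract is that $a$ is small — specifically I'd argue $0 < a \leq $ (some bound forcing $\pi^{n-1} \mid q_g$ in the relevant sense, equivalently $q_g \in \pi^{n-1}\overline{R'}$) — using that $(R' :_{Q'} \m_{R'})$ is squeezed between $R'$ and $\F(R')$-type modules and that $g$ being a genuine homomorphism $\m_{R'} \to R'$ (not into all of $Q'$) pins down how negative/positive $q_g$ can be. Once $q_g \in \pi^{n-1}\overline{R'}$ I get $q_g \,\J(\overline{R'}) = q_g\,\pi\,\overline{R'} \subseteq \pi^n \overline{R'} \subseteq R'$, i.e.\ $g \in \F(R')$.

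I expect the main obstacle to be the degree bookkeeping that shows $R'_a = 0$ is exactly the condition making $q_g$ sit in $\pi^{n-1}\overline{R'}$. The cleanest route is probably: since $R'$ is Gorenstein graded of dimension one, its $a$-invariant (equivalently, the top nonvanishing degree of its canonical module, or the relation $\overline{R'}/R'$ has socle degree governed by the conductor) ties the vanishing $R'_a = 0$ to the degree range where $\overline{R'}$ and $R'$ first differ. Concretely, $\Hom_{R'}(\m_{R'}, R') / R' \cong \soc$-type data, and the Gorenstein hypothesis makes $\Hom_{R'}(\m_{R'},R')/R'$ one-dimensional concentrated in a single degree, so any homogeneous $g \notin R'$ automatically has that degree, and one checks that degree lies in $\F(R')$. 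So the real content is: (i) $\Hom_{R'}(\m_{R'},R')/R'$ is $1$-dimensional over $k'$ (Gorenstein $+$ dimension one $\Rightarrow$ $\m_{R'}$ is ``almost'' a canonical-type module, or use that $R'$ has minimal multiplicity-adjacent structure — more safely, use $\Ext$- duality), and (ii) that unique degree $a_0$ satisfies $\pi^{n-1}\overline{R'} \ni$ everything of degree $a_0$ in $(R':_{Q'}\m_{R'})$. Given the hypothesis is phrased as "$R'_a = 0$", I suspect the intended argument is even softer: $R'_a = 0$ means $g$ cannot be multiplication by an element of $R'$ (as that would need a degree-$a$ element of $R'$), so $g \notin R'$ trivially; and $g \in \F(R')$ because $g(\m_{R'}) \subseteq R'$ together with $g$ homogeneous of a degree where $R'$ vanishes forces, via $\m_{R'} \supseteq \J(\overline{R'}) \cdot(\text{conductor})$-type containments, that $g$ extends to $\J(\overline{R'})$. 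I would write the proof around identifying $g$ with $q_g \in \overline{R'}_a = (\overline{R'}/R')_a$, invoking $\overline{R'} = k'[\pi]$ and the conductor $\pi^n\overline{R'}$, and checking $q_g \pi \overline{R'} \subseteq R'$ degree by degree, flagging the conductor computation as the one genuinely non-formal input.
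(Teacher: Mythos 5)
You have the right two ingredients on the table --- Gorenstein duality giving $\dim_k\bigl(\Hom_{R'}(\m_{R'},R')/R'\bigr)=1$, and the conductor computation of Lemma~\ref{gamma' exists} giving a homogeneous element of $\F(R')\setminus R'$ --- but you never assemble them, and the step you defer as ``one checks that degree lies in $\F(R')$'' is exactly the missing content. The paper's finish is two lines: since $\F(R')\subseteq\Hom_{R'}(\m_{R'},R')$ and $\F(R')\setminus R'$ contains a homogeneous element $\gamma'$, one-dimensionality of the quotient forces $\Hom_{R'}(\m_{R'},R')=R'+k\gamma'$; writing $g=d+d'\gamma'$ with $d$ homogeneous of degree $a$, the hypothesis $R'_a=0$ kills $d$, so $g\in R'\gamma'\subseteq\F(R')$. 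Note that $R'_a=0$ is used twice: once to see $g\notin R'$ (which you do), and once to see that the $R'$-component of $g$ vanishes, so that $g$ itself --- not merely its class modulo $R'$ --- is a multiple of $\gamma'$. Your item (ii) asserts essentially this but gives no proof, and your ``degree bookkeeping'' strand never produces the inequality $a\ge (n-1)\deg\pi$ it would need.

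Your closing suspicion that the intended argument is ``even softer'' --- that $g(\m_{R'})\subseteq R'$ together with homogeneity in a degree where $R'$ vanishes already forces $g$ to extend to $\J(\Rpb)$ --- is false, so the duality step cannot be bypassed. Take $R'=k[t^3,t^4,t^5]$ (connected graded, dimension one, not Gorenstein), so $\Rpb=k[t]$, the conductor is $t^3\Rpb$, and $\F(R')=t^{2}\Rpb$. Then $g=t$ lies in $\Hom_{R'}(\m_{R'},R')$ (indeed $t\cdot\m_{R'}=t^4\Rpb\subseteq R'$), is homogeneous of degree $1$ with $R'_1=0$, yet $t\,\J(\Rpb)=t^2\Rpb\not\subseteq R'$ because $t^2\notin R'$; so $g\notin\F(R')$. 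Here $\Hom_{R'}(\m_{R'},R')/R'=kt\oplus kt^2$ is two-dimensional, which is precisely the situation the Gorenstein hypothesis rules out.
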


\begin{proof} Let $D=R'$. Since $D_a=0$, $g$ is not an element of $D$, and it remains to show that $g \in \F(D)$. As $D$ is Gorenstein, $\dim_k \Ext_D^1(k,D)=1$, where $k=D_0$. Then applying $(\_)^*=\Hom_D(\_,D)$ to the short exact sequence $0 \ra \m_D \ra D \ra k \ra 0$ yields  $\dim_k(\m_D^*/D)=1$.  By  Lemma~\ref{gamma' exists}, there exists some homogeneous $\gamma' \in\F(D) \setminus D$. Then $\gamma' \in \m_D^* \setminus D$, so $\m_D^*=D+k \gamma'$. Therefore $g=d+d' \gamma'$ for some $d \in D_a$ and $d' \in D$. But $d=0$ since $D_a=0$.   Thus $g \in d' \F(D) \subseteq \F(D)$. \qed \end{proof}

\section {Stable AR quivers.} \label{sec:AR quivers}

 In this section we provide the background for those unfamiliar with stable translation quivers and their tree classes. Confer, e.g., \cite{AKM} and \cite{Benson:1991},  although the meaning of ``valued'' is different in \cite{Benson:1991}. 

\defn  A \emph{quiver} is a directed graph $\Gamma = (\Gamma_0, \Gamma_1)$, where $\Gamma_0$ is the set of vertices and $\Gamma_1$ is the set
of arrows. A \emph{morphism} of quivers $\phi:\Gamma \ra \Gamma'$ is a pair  $(\phi_0:\Gamma_0 \ra \Gamma'_0, \phi_1: \Gamma_1 \ra \Gamma'_1)$ such that $\phi_1$ applied to an arrow $x \ra y$ is an arrow $\phi(x) \ra \phi(y)$.    For $x \in \Gamma_0$, $x^-$ denotes the set $\{y \in \Gamma_0| \exists \text{arrow }y \ra x \text{ in } \Gamma_1\}$; and $x^{+}=\{y \in \Gamma_0| \exists \text{arrow } x \ra y \text{ in } \Gamma_1\}$. $\Gamma$ is \emph{locally finite} if $x^+$ and $x^-$ are finite sets for each $x \in \Gamma_0$. A \emph{loop} is an arrow from a vertex to itself. A \emph{multiple arrow} is a set of at least two arrows from a given vertex to another given vertex.  

A \emph{valued quiver} is a quiver $\Gamma$ together with a map $v \colon \Gamma_1 \ra \ZZ_{\ge 1} \times \ZZ_{\ge 1}$. 
By a \emph{graph} we mean an undirected graph. A \emph{valued graph} is a graph $G$ together with specified integers $d_{xy} \ge 1$ and $d_{yx}\ge 1$ for each edge $x \text{\textemdash} y$.

\begin{defn} \label{defn:stable-translation-quiver} A \emph{stable translation quiver} is a locally finite quiver together with a quiver automorphism $\tau$ called the \emph{translation}, such that:
\begin{itemize}
\item $\Gamma$ has no loops and no multiple arrows.
\item For  $x \in \Gamma_0$, $x^-=\tau(x)^+$. \end{itemize} \end{defn}

 Given a stable translation quiver $(\Gamma,\tau)$ and a map $v : \Gamma_1 \ra \ZZ _{\ge 0} \times \ZZ_{\ge 0}$, the triple $(\Gamma,v,\tau)$ is called a \emph{valued  stable translation quiver} if $v(x \ra y)=(a,b) \Leftrightarrow v(\tau(y) \ra x)=(b,a)$. 
A stable translation quiver is \emph{connected} if it is non-empty and cannot be written as disjoint union of two subquivers each stable under the translation. 

\defn \label{defn:component} Let $C$ be a full subquiver of a  quiver $\Gamma$ which satisfies Definition~\ref{defn:stable-translation-quiver} except possibly for the no-loop condition. We call $C$  a \emph{component} of $\Gamma$  if:

(1) For all vertices $x \in C$, we have  $\tau x \in C$ and $\tau^{-1} x \in C$ .

(2) $C$ is a union of connected components of the underlying undirected graph of $\Gamma$.

(3) There is no proper subquiver of $C$ that satisfies (1) and (2).
 
\defn \label{directed tree} By a \emph{directed tree} we shall mean a quiver $T$, with no loops or multiple arrows, such that the underlying undirected graph of $T$ is a tree, and for each $x\in T$, the set $x^-$ has at most one element.

Given a directed tree $T$, there is an associated stable translation quiver $\ZZ T$ defined as follows. The vertices of $\ZZ T$ are the pairs $(n,x)$ with $n \in \ZZ$ and $x$ a vertex of $T$. The arrows of $\ZZ T$ are determined by the following rules: Given vertices $x, y \in T$, and $n \in \ZZ$,

\begin{itemize}

\item $(n,x) \ra (n,y) \in \ZZ T \Leftrightarrow x \ra y  \in T \Leftrightarrow (n,y) \ra (n-1,x) \in \ZZ T$; 

\item If $n' \notin \{n, n-1\}$, there is no arrow $(n,x) \ra (n',y)$.

\end{itemize}

\begin{remark} \label{valued ZZ T} Let $T$ be a valued quiver which is also a  directed tree. Then there is a unique extension of $v$ to $\ZZ T$ such that the latter becomes a valued stable translation quiver. Namely, if  $v( x \ra y )=(a,b)$, then $v((n,x) \ra (n,y))=(a,b)$, and $v( (n,y) \ra (n-1,x) )=(b,a)$. \end{remark}

\begin{lem} \label{ZZ T determines T} Let $T$ and $T'$ be (valued) directed trees. Then $\ZZ T \cong \ZZ T'$ as (valued) stable translation quivers if and only $T \cong T'$ as (valued) graphs.\end{lem}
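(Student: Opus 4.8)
The plan is to prove both implications by reducing everything to the underlying valued graph of a directed tree; write $G(T)$ for that underlying graph (with its valuation). For the ``if'' direction, a valued-graph isomorphism $T\cong T'$ may be used to transport the orientation and valuation of $T$ onto the graph $G(T')$, producing a directed tree $T_1$ with $G(T_1)=G(T')$ together with an isomorphism $T\cong T_1$ of valued quivers; since the construction $\ZZ(-)$ is visibly functorial with respect to isomorphisms of valued quivers (read off the arrow and valuation rules of the construction and of Remark~\ref{valued ZZ T}), this gives $\ZZ T\cong \ZZ T_1$. Thus the ``if'' direction comes down to the \emph{orientation-independence} statement: if $T$ and $T_1$ are directed trees with $G(T)=G(T_1)$ (i.e.\ two orientations of one valued graph), then $\ZZ T\cong\ZZ T_1$ as valued stable translation quivers.

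To prove orientation-independence I would exhibit an explicit isomorphism $\phi\colon\ZZ T\to\ZZ T_1$ of the form $\phi(n,x)=(n+c_x,x)$ for a suitable ``potential'' function $c\colon G_0\to\ZZ$ on the common vertex set. Let $F$ be the set of edges on which the two orientations disagree. Checking the defining arrow and valuation rules directly (there are two arrows of $\ZZ T$ lying over each edge, one in each ``direction''), one finds that $\phi$ is a valued-quiver isomorphism commuting with the translation exactly when $c$ is constant across each edge not in $F$ and jumps by $\pm1$, with the sign dictated by which way the edge is flipped, across each edge in $F$. Because $G$ is a tree, and so has no cycles, such a $c$ exists: fix a basepoint, set $c=0$ there, and define $c_v$ as the sum of the prescribed jumps along the unique path from the basepoint to $v$; well-definedness uses only the absence of cycles, and no finiteness hypothesis on $T$ is needed. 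Combined with the transport-of-structure reduction, this establishes the ``if'' direction.

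For the ``only if'' direction, let $\phi\colon\ZZ T\to\ZZ T'$ be an isomorphism of valued stable translation quivers. Since the translation is part of the data, $\phi$ commutes with it and hence descends to an isomorphism $\bar\phi\colon(\ZZ T)/\tau\to(\ZZ T')/\tau$ of valued quivers. The translation acts freely on both vertices and arrows of $\ZZ T$, and a direct computation from the defining rules identifies $(\ZZ T)/\tau$ with the ``doubled tree'' $\widehat T$: its vertex set is $T_0$, and for each undirected edge $\{x,y\}$ of $G(T)$ it has exactly two arrows $[x]\to[y]$ and $[y]\to[x]$, with valuations $(a,b)$ and $(b,a)$ where the valuation of that edge in $T$ is $(a,b)$. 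In particular $\widehat T$ depends only on $G(T)$, and conversely $G(T)$ is recovered from $\widehat T$: the source and target of an arrow are intrinsic, so the arrow from $[x]$ to $[y]$ carries precisely the ordered datum $(d_{xy},d_{yx})$. Hence $\widehat T\cong\widehat{T'}$ as valued quivers forces $G(T)\cong G(T')$, i.e.\ $T\cong T'$ as valued graphs. The unvalued case is the special case where all valuations equal $(1,1)$, and every step above applies verbatim.

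I expect the main obstacle to be the bookkeeping in the orientation-independence step: lining up the conventions of the construction and of Remark~\ref{valued ZZ T} so that the jump condition on $c$ is forced consistently at \emph{both} arrows lying over a flipped edge, and verifying that the candidate $\phi$ respects the valuation and the translation simultaneously. By contrast, the descent-to-$\tau$-quotient argument for the converse is short once one notices that the doubled tree is orientation-free and that a valued graph is reconstructed from its double.
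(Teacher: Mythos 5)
Your argument is correct; note that the paper itself offers no proof of this lemma beyond the citation to \cite[proposition 4.15.3]{Benson:1991}, so your write-up is a genuine filling-in rather than a variant of an argument in the text. Both halves check out against the paper's conventions. For the ``if'' direction, the reduction by transport of structure is fine, and the potential-function argument for orientation-independence works: with the paper's arrow rules, an edge oriented $x\ra y$ in $T$ but $y\ra x$ in $T_1$ forces $c_y=c_x-1$ from the arrow $(n,x)\ra(n,y)$, and the second arrow $(n,y)\ra(n-1,x)$ over the same edge imposes the \emph{same} condition $c_y=c_x-1$, so the two constraints per flipped edge are consistent, as you anticipated; acyclicity of the tree then gives existence of $c$, and the valuation check goes through because $v((n,y)\ra(n-1,x))=(b,a)$ matches the value $(b,a)$ assigned to $y\ra x$ in $T_1$ by the underlying valued graph. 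For the ``only if'' direction, an isomorphism of stable translation quivers commutes with $\tau$ by definition, $\tau$ acts freely on $\ZZ T$, and the quotient is indeed the double of the underlying valued graph with the arrow $[x]\ra[y]$ carrying $(d_{xy},d_{yx})$, from which the valued graph is reconstructed; this is arguably cleaner than the alternative of re-running the Riedtmann path construction inside $\ZZ T$ to recover $T$. The only points worth making explicit in a final version are (i) that $T_1$ is again a directed tree in the paper's restricted sense (each vertex has at most one predecessor) because it is isomorphic to $T$ as a quiver, so $\ZZ T_1$ is defined, and (ii) that the basepoint argument for $c$ uses connectedness of the tree (or should be run componentwise).
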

\begin{proof} See~\cite[proposition 4.15.3]{Benson:1991}. \qed \end{proof}

A group $\Pi$ of automorphisms (commuting with the translation) of a stable translation quiver $\Gamma$ is said to be \emph{admissible} if no orbit of $\Pi$ on the vertices of $\Gamma$ intersects a set of the form $\{x\} \cup x^+$ or $\{x\} \cup x^-$ in more than one point. 
The quotient quiver $\Gamma/\Pi$, with vertices the $\Pi$-orbits of $\Gamma_0$, and with the induced arrows and translation, is also a  stable translation quiver. A surjective morphism of stable translation quivers $\phi:\Gamma \ra \Gamma'$ is called a \emph{covering} if, for each $x \in \Gamma_0$, the induced maps $x^- \ra \phi(x)^-$ and $x^+ \ra \phi(x)^+$ are bijective. 
Note that if $\Pi$ is an admissible group of automorphisms of $\Gamma$,
\begin{equation}\label{quotient is a covering} \text{the canonical projection } \Gamma \ra \Gamma/ \Pi  \text{ is a covering.}\end{equation}

\begin{theorem} \label{RST} \emph{(Riedtmann Structure Theorem}; see \cite[Theorem 4.15.6]{Benson:1991} \emph{)} Given a connected stable translation quiver $\Gamma$, there is a directed tree $T$ and an admissible group of automorphisms $\Pi \subseteq \Aut (\ZZ T)$ such that $\Gamma \cong \ZZ T/\Pi$. In particular, we have a covering $\ZZ T \ra \Gamma$. The underlying undirected graph of $T$ is uniquely determined by $\Gamma$, up to isomorphism. \end{theorem}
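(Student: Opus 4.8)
The plan is to run the classical covering-space argument for stable translation quivers. Two facts are needed: (i) every connected stable translation quiver $\Gamma$ admits a \emph{universal cover} $p\colon\tilde{\Gamma}\to\Gamma$ which is again a connected stable translation quiver, is simply connected, and satisfies $\Gamma\cong\tilde{\Gamma}/\Pi$, where $\Pi=\Aut(\tilde{\Gamma}/\Gamma)$ is the (automatically admissible) group of deck transformations; and (ii) a connected, simply connected stable translation quiver is isomorphic to $\ZZ T$ for some directed tree $T$. Granting these, apply (ii) to $\tilde{\Gamma}$ to write $\tilde{\Gamma}\cong\ZZ T$; then $\Gamma\cong\ZZ T/\Pi$, and by~\eqref{quotient is a covering} the quotient map $\ZZ T\to\Gamma$ is a covering. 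For the uniqueness clause, note $\tilde{\Gamma}$ is determined up to isomorphism by $\Gamma$ through the universal property of universal covers, so any two directed trees $T,T'$ produced this way have $\ZZ T\cong\ZZ T'$, whence $T\cong T'$ as undirected graphs by Lemma~\ref{ZZ T determines T}.

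For (i), fix a base vertex $x_0\in\Gamma_0$ and let the vertices of $\tilde{\Gamma}$ be the homotopy classes of walks in $\Gamma$ starting at $x_0$, homotopy being generated by cancelling backtracks and by the ``mesh moves'' coming from the relation $x^-=\tau(x)^+$: every arrow $\alpha\colon y\to x$ has a mesh partner $\sigma\alpha\colon\tau x\to y$, and a walk may be slid across such a mesh. Put an arrow $[w]\to[w\alpha]$ for each arrow $\alpha$ of $\Gamma$ leaving the endpoint of $w$, and lift the translation to $\tilde{\Gamma}$ so that $p$ intertwines the two translations. One then checks that $p$ is a covering of stable translation quivers (it restricts to bijections $\tilde{x}^{\pm}\to p(\tilde{x})^{\pm}$), that $\tilde{\Gamma}$ is connected and simply connected, and that $\Gamma\cong\tilde{\Gamma}/\Pi$. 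Admissibility of $\Pi$ is a short chase: $\Pi$ acts freely, and if $1\neq g\in\Pi$ carried a vertex $\tilde{x}$ into $\{\tilde{x}\}\cup\tilde{x}^{+}$ or $\{\tilde{x}\}\cup\tilde{x}^{-}$, then applying $p$ would yield a loop or a multiple arrow in $\Gamma$, which is impossible.

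Step (ii) is the substance of the proof, and I expect it to be the main obstacle. Let $\Delta$ be connected and simply connected. Using simple-connectedness, one equips $\Delta$ with a \emph{level function} $d\colon\Delta_0\to\ZZ$ such that $d(\tau v)=d(v)+1$ and $d(u)-d(v)\in\{0,1\}$ for every arrow $u\to v$; set $T$ equal to the full subquiver of $\Delta$ on $d^{-1}(0)$. One checks that $T$ has no loops or multiple arrows (inherited from $\Delta$), that its underlying undirected graph is a tree and that each vertex of $T$ has at most one predecessor — both again forced by simple-connectedness — so $T$ is a directed tree; and that the map $\ZZ T\to\Delta$, $(n,t)\mapsto\tau^{n}t$, is an isomorphism of stable translation quivers. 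Surjectivity uses that every $\tau$-orbit meets $d^{-1}(0)$ (connectedness of $\Delta$), injectivity uses that $d(\tau^{n}t)=n$, and matching the arrows amounts to checking that the two families of arrows defining $\ZZ T$ — the ``horizontal'' and the ``mesh-downward'' ones — correspond bijectively to the arrows of $\Delta$, which is read off from the mesh relations. Verifying that $d$ is well defined, that $T$ is a tree, and that the arrow correspondence is exact is the technical heart; everything else is formal covering-space bookkeeping.
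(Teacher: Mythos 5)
The paper does not actually prove this statement: it is imported from Benson \cite[Theorem 4.15.6]{Benson:1991}, and the only piece of the argument retained in the text is Remark~\ref{rmk:treeclass}, which records the construction of $T$ as the set of reduced paths $(x=y_0\to y_1\to\cdots\to y_n)$ from a fixed base vertex (``reduced'' meaning no $y_i=\tau(y_{i+2})$). Your outline is the standard covering-theoretic proof, so in spirit it matches the cited source; step (i), the deduction of the covering from~\eqref{quotient is a covering}, and the uniqueness argument via Lemma~\ref{ZZ T determines T} are all fine. You are also right to insist that ``simply connected'' must mean simply connected with respect to the homotopy generated by mesh moves: naive graph-theoretic simple connectivity would make step (ii) false, since already the underlying undirected graph of $\ZZ A_3$ contains $4$-cycles coming from the meshes.

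The one concrete weak point is in step (ii), where you claim that $T=d^{-1}(0)$ being a directed tree is ``forced by simple connectedness.'' Simple connectedness does give that the underlying undirected graph of $d^{-1}(0)$ is a tree, but it does not force Definition~\ref{directed tree}, which additionally requires each vertex of $T$ to have at most one predecessor --- and that condition is needed even to form $\ZZ T$ as defined in the paper. For example, take $\Delta=\ZZ(a\to b\to c)$ and $d(n,a)=d(n,b)=n$, $d(n,c)=n-1$; this $d$ satisfies all of your constraints, yet $d^{-1}(0)$ is the quiver $(0,a)\to(0,b)\leftarrow(1,c)$, in which $(0,b)$ has two predecessors. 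The repair is standard but has to be said: either build $T$ as in Remark~\ref{rmk:treeclass}, where each reduced path has a unique predecessor (its truncation) by construction, or show separately that $\ZZ T$ depends only on the underlying undirected tree of $T$, so that the edges may be re-oriented away from a chosen root. With that addendum, and with the deferred verifications (existence and well-definedness of $d$, the bijection between the arrows of $\Delta$ and the two families of arrows of $\ZZ T$) actually carried out, your outline becomes the proof in the literature.
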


The underlying undirected graph of $T$ is called the \emph{tree class} of $\Gamma$.

\begin{remark}\label{rmk:treeclass} Formally, the tree class $T$ of $\Gamma$ is constructed as follows (as in the proof of Theorem~\ref{RST}, which we will not reproduce here). Choose any vertex $x \in \Gamma$, and define the vertices of $T$ to be the set of paths \begin{center} $(x=y_0 \ra y_1 \ra \cdots \ra y_n) \quad (n \ge 0)$ \end{center}
for which no $y_i=\tau(y_{i+2})$. The arrows of $T$ are \begin{center}
$(x=y_0 \ra y_1 \ra \cdots \ra y_{n-1}) \to (x=y_0 \ra y_1 \ra \cdots \ra y_n)$ .\end{center}
\end{remark}

\begin{remark} \label{rmk:valued tree class} Suppose $\Gamma$ is a valued stable translation quiver, and let $\phi: \ZZ T  \ra \Gamma$ be a covering, which exists by the Theorem. Now $\ZZ T$ becomes a valued stable translation quiver, by setting $v(x\ra y)=v(\phi(x\ra y))$. In particular, $T$ becomes  a valued quiver. \end{remark}

\defn \label{defn:valued tree class} The \emph{valued tree class} of a stable translation quiver $\Gamma $ is a valued graph $(T,v)$ where $T$ denotes the tree class of  $\Gamma $, and $v \colon \{\text{edges of } T\} \ra \ZZ_{\ge 0} \times \ZZ_{\ge 0}$ is given as in Remark~\ref{rmk:valued tree class}. 


\begin{defn} \label{defn:subadditive}
Let $(\Gamma,v)$ be a valued, locally finite  quiver without  multiple arrows. For $x\ra y$ in $\Gamma$, we write $v(x \ra y)=(d_{xy},d_{yx})$. If there is no arrow between $x$ and $y$, we set $d_{xy}=d_{yx}=0$. Let $\QQ_{>0}$ be the set of positive rational numbers.

(i) A \emph{subadditive function} on $(\Gamma,v)$ is a $\QQ_{>0}$-valued function $f$ on the set of vertices of $\Gamma$ such that $2f(x) \ge \sum_{y \in \Gamma} d_{yx} f(y)$, for each vertex $x$.

(ii) An \emph{additive function} on $(\Gamma,v)$ is a $\QQ_{>0}$-valued function $f$ on the set of vertices of $\Gamma$ such that $2f(x) =\sum_{y \in \Gamma} d_{yx} f(y)$, for each vertex $x$.
\end{defn}

%

\begin {lem}\label{lemma:Dynkin diagrams}  \cite[Theorem 4.5.8]{Benson:1991} Let $(\Gamma,v)$ be a  connected valued quiver  without loops or multiple arrows. Suppose $f$ is a subadditive function on $\Gamma$, and assume $\Gamma$ has infinitely many vertices. Then: \begin{enumerate}\item The underlying valued graph of $\Gamma$ is an infinite Dynkin diagram.
 \item If $f$ is unbounded, or if $f$ is not additive, then the underlying valued graph of $\Gamma$ is $A_\infty$. \end{enumerate}\end{lem}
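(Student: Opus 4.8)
\emph{Proposal.} The statement is exactly \cite[Theorem 4.5.8]{Benson:1991}, so in the paper we simply cite Benson; the following sketches how the argument is reconstructed. The plan is to reduce to Vinberg's trichotomy for indecomposable matrices with positive diagonal and nonpositive off-diagonal entries, by restricting $f$ to finite full subgraphs, and then to run a combinatorial classification plus an elementary analysis of positive concave sequences.

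First I would observe that for any \emph{finite} connected full subgraph $\Gamma'$ of $\Gamma$ the restriction $f|_{\Gamma'}$ is again subadditive on $\Gamma'$: for a vertex $x$ of $\Gamma'$ one has $2f(x)\ge\sum_{y\in\Gamma}d_{yx}f(y)\ge\sum_{y\in\Gamma'}d_{yx}f(y)$, the second step merely discarding nonnegative summands. Since $\Gamma$ is connected with infinitely many vertices, $\Gamma'$ is proper, so it has a vertex $x_0$ with a neighbour outside $\Gamma'$; at $x_0$ the first inequality is strict, so $f|_{\Gamma'}$ is subadditive but \emph{not} additive. Let $C'$ be the matrix on the vertices of $\Gamma'$ with $C'_{xx}=2$ and $C'_{xy}=-d_{yx}$ for $x\ne y$; then $C'$ is indecomposable with positive diagonal and nonpositive off-diagonal, $C'(f|_{\Gamma'})\ge 0$ with $f|_{\Gamma'}>0$, and $C'(f|_{\Gamma'})\ne 0$. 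Vinberg's lemma (as used in the proof of \cite[Theorem 4.5.8]{Benson:1991}) then forces $C'$ to be of finite type — it cannot be affine, since for affine type $C'v\ge 0,\ v>0$ would give $C'v=0$ — and the classification of positive definite Cartan matrices identifies $\Gamma'$ with a finite Dynkin diagram.

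Thus every finite connected full subgraph of $\Gamma$ is Dynkin. A connected infinite valued graph with this property must be one of the infinite Dynkin diagrams $A_\infty$, $A_\infty^\infty$, $B_\infty$, $C_\infty$, $D_\infty$: the hypothesis forces all vertex degrees to be at most $3$ (else an extended $D_4$ appears), at most one vertex of degree $3$ (else an extended $D_n$), at most one edge of valuation $\ne(1,1)$ and only of type $(1,2)$ or $(2,1)$ (else an extended $A_1$, $B$, $C$, $F$, or $G$), no such edge together with a branch vertex, and bounds the short arms at a branch vertex to length one; running through the surviving possibilities yields precisely that list. This proves (1).

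For (2) it remains to check that on each of $A_\infty^\infty$, $B_\infty$, $C_\infty$, $D_\infty$ every positive subadditive function is bounded and additive, so that if $f$ is unbounded or non-additive the only possibility left is $A_\infty$. In each case the subadditivity inequalities say that along the infinite arm the consecutive increments of $f$ are non-increasing (i.e.\ $f$ is concave there), with the appropriate boundary relations at the finite end (the fork of $D_\infty$, the double bond of $B_\infty$ or $C_\infty$, nothing extra for $A_\infty^\infty$). A positive concave sequence on a ray or line cannot be unbounded, since an increment that is eventually positive is an upper bound for all earlier increments and would push the values negative in the opposite direction; hence $f$ is bounded, and a bounded sequence whose increments are non-increasing with bounded partial sums has all increments eventually zero, which together with the finitely many boundary relations pins $f$ down to the unique-up-to-scalar additive function. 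I expect the real work (and the reason the paper just cites Benson) to be in the second and third paragraphs: importing Vinberg's trichotomy and the Cartan classification, or reproving the needed instances by the standard extended-Dynkin-subgraph obstructions, and then executing the routine but fiddly classification of infinite graphs all of whose finite connected full subgraphs are Dynkin; the sequence argument in (2) is then straightforward arithmetic.
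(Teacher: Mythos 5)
The paper offers no proof of this lemma beyond the citation to Benson (Theorem~4.5.8), and your reconstruction is exactly the standard Happel--Preiser--Ringel/Benson argument behind that citation: restrict $f$ to finite connected full subgraphs, note the restriction is subadditive but not additive, invoke Vinberg's trichotomy to force finite (Dynkin) type, classify the infinite connected valued graphs all of whose finite connected full subgraphs are Dynkin as $A_\infty$, $A_\infty^\infty$, $B_\infty$, $C_\infty$, $D_\infty$, and then analyze concave positive sequences on the last four. One small repair in part (2): on a one-ended ray ($B_\infty$, $C_\infty$, $D_\infty$) positivity and concavity alone do \emph{not} force boundedness (witness $f(n)=n$ on $A_\infty$ itself), so the boundary relation at the finite end must be used already to kill the increments, not merely afterwards to pin down the additive function; with that reordering the argument closes as you describe.
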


\section{The Cohen-Macaulay setting} The over-arching ideas of this section are largely from \cite{AKM}, which in turn is based partly on \cite{Happel1980}. Since we confine our arguments to the commutative setting, they are sometimes slightly easier. Also, the proof of \cite[Lemma 1.23]{AKM} is flawed, since its penultimate sentence is false, and we give a correct version as Lemma~\ref{no loops}. A principal goal of this section is to provide some sufficient conditions for guaranteeing that a given component of the stable AR quiver of a hypersurface is a tube; see  Proposition~\ref{tube proposition}. 

Assume $R$ is a Cohen-Macaulay complete local ring, with maximal ideal $\m=\m_R$. (But the same results hold when $R$ is connected graded instead of complete local.)

\defn \label{defn:Irr} If $M$ and $N$ are indecomposables in $\CM(R)$,  let $\Irr(M,N)$ denote the module of nonisomorphisms $M \to N$ modulo those which are not irreducible. Let $k_M$ denote the division ring $(\End_R M)/\J(\End_R M)$. Thus $\Irr(M,N)$ is a right $k_M$-space, and a left $k_N$-space.

\defn \label{defn:AR quiver} The  Auslander-Reiten quiver of $R$ is the valued quiver defined as follows:
 \begin{itemize}
\item Vertices are isoclasses of indecomposables in $\CM(R)$.
\item There is an arrow $M \ra N$ if and only if there exists an irreducible morphism $M \ra N$, i.e. $\Irr(M,N) \neq 0$. The value $v(M \ra N) $ of the arrow $M \ra N$ is $(a,b)$ where $a$ is the dimension of $\Irr(M,N)$ as a right $k_M$-space, and $b$ is the dimension of $\Irr(M,N)$ as a left $k_N$-space.
\end{itemize}

Recall that we use $\tau$ to denote the AR-translate (defined at the end of Definition~\ref{defn:AR-seq}). 

\begin{lem} \label{dim over k_M} Let $M$ and $N$ be indecomposables in $L_p(R)$. \begin{enumerate} \item If $0 \ra \tau N \ra E\ra N\ra 0$ is an AR sequence, the number of copies of $M$ appearing in a direct sum decomposition of $E$ equals the dimension of $\Irr(M,N)$ as a right  $k_M$-space.
\item If $0 \ra M \ra E'\ra \tau^{-1}M\ra 0$ is an AR sequence, then the number of copies of $N$ appearing in a direct sum decomposition of $E'$ equals the dimension of $\Irr(M,N)$ as a left $k_N$-space. \end{enumerate}
\end{lem}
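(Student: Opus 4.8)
The plan is to prove (1); part (2) then follows by a dual argument (applying $(\_)^*$, or equivalently by working in the opposite category, which interchanges the roles of $M \to N$ and $\tau^{-1}M$), or directly by the same method reading the AR sequence from the other end. For (1), let $0 \to \tau N \to E \to N \to 0$ be the AR sequence ending in $N$, and write $E \cong \bigoplus_j L_j^{a_j}$ as a direct sum of indecomposables with the $L_j$ pairwise non-isomorphic. First I would recall from \S\ref{sit:irreducible} that $L$ is a summand of $E$ if and only if there is an irreducible morphism $L \to N$ (equivalently $\tau N \to L$); so the $L_j$ are exactly the sources of arrows into $N$, and it remains to identify the multiplicity $a_j$ with $\dim_{k_{L_j}} \Irr(L_j, N)$ (as a right $k_{L_j}$-space) and specialize to $L_j = M$.

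The key step is the standard fact that, given the AR sequence ending in $N$ with surjection $q\colon E \to N$, and given an indecomposable $L$, the map
\begin{equation*}
\Hom_R(L,E) \longrightarrow \mathrm{rad}(L,N), \qquad \varphi \mapsto q\varphi,
\end{equation*}
is surjective (this is immediate from the defining property of the AR sequence: any non-split-epi $L \to N$ factors through $q$), and its kernel is the set of $\varphi\colon L \to E$ such that $q\varphi$ factors through the radical-square, i.e. it descends to an isomorphism
\begin{equation*}
\Irr(L,N) \;\cong\; \Hom_R(L,E)\big/\!\left(\mathrm{rad}^2(L,N)\text{-part}\right)
\end{equation*}
compatibly with the right $\End_R(L)$-action. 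Concretely: composing with $q$ carries $\Hom_R(L,E)$ onto $\mathrm{rad}(L,N)$, and carries the submodule $\{\varphi : q\varphi \in \mathrm{rad}^2(L,N)\}$ onto $\mathrm{rad}^2(L,N)$; since by definition $\Irr(L,N) = \mathrm{rad}(L,N)/\mathrm{rad}^2(L,N)$, we obtain $\Irr(L,N)$ as a quotient of $\Hom_R(L,E)$. Decomposing $E = \bigoplus_j L_j^{a_j}$ and using that $\Hom_R(L_i, L_j)$ is radical for $i \ne j$ while $\Hom_R(L_i,L_i)$ is local with residue division ring $k_{L_i}$, one checks that after quotienting by radicals the only surviving contribution to $\Irr(L_i,N)$ is $k_{L_i}^{a_i}$, giving $\dim_{k_{L_i}} \Irr(L_i,N) = a_i$ as right $k_{L_i}$-spaces. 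Taking $L_i = M$ proves (1).

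The main obstacle is bookkeeping the radical filtration correctly: one must verify that the surjection $\Hom_R(L,E) \to \mathrm{rad}(L,N)$ really does send the ``factors through $\mathrm{rad}^2$'' submodule precisely onto $\mathrm{rad}^2(L,N)$ and no further, so that the induced map on $\Irr$ is an isomorphism and not merely a surjection, and to track the $k_M$-action through this identification so that the dimension count is on the correct side. This is where the hypothesis $M,N \in L_p(R)$ is used, via Theorem~\ref{Yo theorem}, to guarantee the relevant AR sequences exist and that $\End_R(M)$, $\End_R(N)$ are local so that $k_M$, $k_N$ are genuinely division rings. Part (2) is then obtained either by the symmetric argument using the AR sequence starting from $M$ together with the dual of \ref{sit:irreducible}, or by applying the duality $(\_)^*$ of \S\ref{subsec:CM and Gor}, which is exact on $\CM(R)$ for $R$ Gorenstein and interchanges $\tau$ with $\tau^{-1}$.
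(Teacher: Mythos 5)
Your plan is correct: the paper itself offers no argument for Lemma~\ref{dim over k_M}, only the citation to Yoshino's Lemmas 5.5 and 5.6, and what you outline (the surjection $\Hom_R(L,E)\to\mathrm{rad}(L,N)$ induced by $q$, the identification of its ``$\mathrm{rad}^2$'' preimage with $\mathrm{rad}(L,E)$, and the resulting isomorphism $\Irr(M,N)\cong\Hom_R(M,E)/\mathrm{rad}(M,E)\cong k_M^{a}$ of right $k_M$-spaces) is precisely the standard proof underlying that citation. The one step you flag as the obstacle is indeed the crux, but it is routine in both directions --- if $\varphi\in\mathrm{rad}(L,E)$ then $q\varphi\in\mathrm{rad}^2$ since each component of $q$ is irreducible, and conversely any $f=\sum g_ih_i\in\mathrm{rad}^2(L,N)$ has each $g_i$ a non-split epimorphism, hence factoring through $q$ --- so your outline closes.
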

\begin{proof}See \cite[Lemmas 5.5 and 5.6]{Yoshino:book}. \qed \end{proof}
\begin{remark} \label{rmk:quiver-symmetry}  Suppose that $k=R/\m$ is algebraically closed. Then in the notation of Lemma~\ref{dim over k_M}, we have $k=k_M=k_N$, and it therefore follows from Lemma~\ref{dim over k_M} that the number of copies of $N$ appearing in a decomposition of $E'$ equals the number of copies of $M$ appearing in a decomposition of $E$.\end{remark}

Notationally, we allow $\tau$ to be a partially-defined morphism on the AR quiver of $R$; $\tau x$ is defined precisely when the vertex $x$ corresponds to a non-projective module in $L_p(R)$, by \cite[Theorem 3.4]{Yoshino:book}. The following  fact is used in \cite{AKM}, and the proof essentially follows that of  \cite[VII 1.4]{AuslanderReitenSmalo}.

\begin{lem} \label{lem:valued translation} Let $x \ra y$ be an arrow in the AR quiver of $R$, and let $(a,b)=v(x \ra y)$. If $\tau y $ is defined, then $v(\tau y \ra x) =(b,a)$. If $\tau x$ and $\tau y$ are both defined, then $v(\tau x \ra \tau y)=v(x \ra y)$. \end{lem}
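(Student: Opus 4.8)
The strategy is to reduce everything to the module-theoretic description of arrow values from Lemma~\ref{dim over k_M}, and then use the invariance of AR sequences under $\tau$ and $\tau^{-1}$. Recall that $v(x \to y) = (a,b)$ means: in the AR sequence $0 \to \tau y \to E \to y \to 0$, the module corresponding to $x$ appears $a$ times in a decomposition of $E$ (the right $k_M$-dimension of $\Irr(M,N)$); and in the AR sequence $0 \to x \to E' \to \tau^{-1} x \to 0$, the module corresponding to $y$ appears $b$ times in a decomposition of $E'$ (the left $k_N$-dimension). Here $M, N$ denote the modules at vertices $x, y$.

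For the first assertion, suppose $\tau y$ is defined; write $X, Y$ for the modules at $x, y$ and $\tau Y$ for the module at $\tau y$. I want $v(\tau y \to x) = (b,a)$. The AR sequence ending at $x$ is $0 \to \tau X \to E'' \to X \to 0$, and by Lemma~\ref{dim over k_M}(1) the multiplicity of $\tau Y$ in $E''$ is the right $k_{\tau Y}$-dimension of $\Irr(\tau Y, X)$, which is the first coordinate of $v(\tau y \to x)$. On the other hand, the AR sequence starting at $\tau Y$ is $0 \to \tau Y \to E \to Y \to 0$ (this is exactly the AR sequence ending at $Y$, since $\tau Y$ is the translate of $Y$), and by Lemma~\ref{dim over k_M}(2) the multiplicity of $X$ in $E$ is the left $k_X$-dimension of $\Irr(\tau Y, X)$, which is the second coordinate of $v(\tau y \to x)$. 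But the multiplicity of $X$ in the middle term of the AR sequence ending at $Y$ is precisely $a$ by definition of $v(x\to y)$. So the second coordinate of $v(\tau y \to x)$ is $a$. Symmetrically — applying the same reasoning with roles reflected, using that the AR sequence ending at $x$ coincides with the AR sequence starting at $\tau X$ — the first coordinate of $v(\tau y \to x)$ equals the left $k_Y$-dimension of $\Irr(X,Y)$, which is $b$. Hence $v(\tau y \to x) = (b,a)$.

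The second assertion follows by applying the first twice. If $\tau x$ and $\tau y$ are both defined, then from the arrow $x \to y$ and the first part we get $v(\tau y \to x) = (b,a)$. Now $\tau x$ is defined and is the translate of $x$, so applying the first part again to the arrow $\tau y \to x$ gives $v(\tau x \to \tau y) = (a,b) = v(x \to y)$, as desired. (One should check that the hypothesis ``$\tau$ of the target is defined'' is met at the second application: the target of $\tau y \to x$ is $x$, and $\tau x$ is assumed defined.)

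The main obstacle is bookkeeping: one must be careful to match up, on the one hand, the right-versus-left $k$-dimensions in the two clauses of Lemma~\ref{dim over k_M}, and on the other hand, the identification of ``the AR sequence ending at $N$'' with ``the AR sequence starting at $\tau N$'' — these are literally the same short exact sequence, which is what makes the two multiplicity counts refer to the same space $\Irr(\tau N, \cdot)$ or $\Irr(\cdot, N)$. Once those identifications are pinned down, each coordinate is read off directly, with no further computation. The argument uses only Lemma~\ref{dim over k_M} and the uniqueness of AR sequences from Definition~\ref{defn:AR-seq}.
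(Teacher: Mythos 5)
Your treatment of the second coordinate of $v(\tau y \ra x)$ is correct and coincides with the paper's first step: reading the multiplicity of $X$ in the middle term of the single sequence $0 \ra \tau Y \ra E \ra Y \ra 0$ in two ways via Lemma~\ref{dim over k_M} shows that the left $k_X$-dimension of $\Irr(\tau Y, X)$ equals the right $k_X$-dimension of $\Irr(X,Y)$, namely $a$; and the deduction of the lemma's last sentence from its first is fine. The gap is in the ``symmetric'' step for the first coordinate. That coordinate is by definition the dimension of $\Irr(\tau Y, X)$ as a \emph{right} $k_{\tau Y}$-space, and running your argument with roles reflected --- using the AR sequence ending at $x$, read once as ending at $X$ and once as starting at $\tau X$ --- identifies it with the left $k_{\tau Y}$-dimension of $\Irr(\tau X,\tau Y)$, i.e.\ with the second coordinate of $v(\tau x \ra \tau y)$, \emph{not} with $b$, the left $k_Y$-dimension of $\Irr(X,Y)$. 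Equating those two numbers is precisely half of the lemma's second assertion, which you then derive from the first: the argument is circular. (A secondary problem: the first assertion assumes only that $\tau y$ is defined, so the AR sequence ending at $x$ that this step invokes need not exist --- $x$ could, for instance, be the vertex of $R$.)

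What is genuinely needed is a comparison of the division rings $k_Y$ and $k_{\tau Y}$. From your correct step one gets $\dim_k\Irr(\tau Y,X)= a\cdot\dim_k k_X=\dim_k\Irr(X,Y)= b\cdot\dim_k k_Y$, so the right $k_{\tau Y}$-dimension of $\Irr(\tau Y,X)$ equals $b\cdot\dim_k k_Y/\dim_k k_{\tau Y}$; to conclude that it is $b$ you must know $\dim_k k_Y=\dim_k k_{\tau Y}$. This is the nontrivial content of the paper's proof: it establishes $k_N\cong k_{\tau N}$ as $k$-algebras by lifting an endomorphism $h$ of $N$ through the AR sequence to an endomorphism $h'$ of $\tau N$ and checking that non-isomorphisms lift to non-isomorphisms (if $h'$ were invertible while $h$ is not, then $h$ would factor through the right-hand map of the AR sequence and the top sequence in the lifted diagram would split). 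Without this ingredient, or a substitute for it, the first coordinate cannot be extracted from Lemma~\ref{dim over k_M} alone.
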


\begin{proof} We need not prove the last sentence, as it follows from the previous. Let $M$ and $N \in \CM(R)$ be the modules corresponding to $x$ and $y$ respectively. We first show $k_N$ and $k_{\tau N}$ are isomorphic $k$-algebras, where $k=R/\m$. Let $ \xymatrix@C-6pt{ 0 \ar[r] & \tau N \ar[r]^p & E \ar[r]^q & N \ar[r] & 0 }$ be an AR sequence. Given $h \in \End_R N$, there exists a commutative diagram
\begin {equation} \label{endo of AR seq} \xymatrix{ 0 \ar[r] & \tau N \ar[r]^p \ar[d]_{h'}& E \ar[r]^q  \ar[d]& N \ar[r] \ar[d]^h & 0 \\
0 \ar[r] & \tau N \ar[r]^p & E \ar[r]^q & N \ar[r] & 0 &.}\end{equation}
 Indeed, note that $hq$ is not a split epimorphism, because if $h$ is surjective, then $h$ is an isomorphism, and thus $hq$ is not a split epimorphism because $q$ is not. Therefore, by Definition~\ref{defn:AR-seq}, there exists $u: E\to E$ such that $hq=qu$, and the existence of $h'$ follows. 
 
 By the dual argument, any given $h' \in \End_R(\tau N)$ can be fit into a similar commutative diagram. 

We wish to show that $h \mapsto h'$ induces a well-defined map $k_N \ra k_{\tau N}$. If so then it is  a surjective ring map  from a division ring, hence an isomorphism, so we will be done. It suffices to show that, given any commutative diagram~\eqref{endo of AR seq} such that $h$ is a nonisomorphism, it follows that $h'$ is also a  nonisomorphism. Suppose, to the contrary, that $h$ is an nonisomorphism and $h'$ is an isomorphism. We may assume $h'$ is the identity map, since we could certainly compose the diagram~\eqref{endo of AR seq} with a similar diagram which has $(h')^{-1}$ on the left. As $h$ is not a split epimorphism, it factors through $q$. But then the top sequence in~\eqref{endo of AR seq} splits, cf. \cite[Ch. III, Lemma 3.3]{MacLane:1963}; and this of course is a contradiction. Thus $k_N \cong k_{\tau N}$ as $k$-algebras. 

In particular, $\dim_k(k_N)=\dim_k(k_{\tau N})$. As $\dim_{k_M} \Irr(M,N)=\dim_{k_M} \Irr(\tau N,M)$ is an immediate consequence of Lemma~\ref{dim over k_M}, our aim is to show $\dim_{k_N} \Irr(M,N)=\dim_{k_{\tau N}} \Irr(\tau N,M)$. By the former, we have  $\dim_{k} \Irr(M,N)=\dim_{k} \Irr(\tau N,M)$. Thus,  
$\dim_{k_N} \Irr(M,N)= \dim_k \Irr(M,N)/\dim_k(k_N) =\dim_{k}\Irr(\tau N,M)/\dim_k(k_{\tau N})=\dim_{k_{\tau N}} \Irr(\tau N,M)$. \qed \end{proof}

\defn \label{defn:stable quiver} If $R$ is Gorenstein,  the \emph{stable Auslander-Reiten (AR) quiver} of $R$ is the valued quiver defined as in Definition~\ref{defn:AR quiver}, except that the vertices are only the isoclasses of nonfree indecomposable modules $M \in L_p(R)$. By a \emph{stable AR component}, we shall mean a component (Definition~\ref{defn:component}) of the stable AR quiver.

\begin{defn} \label{defn:periodic} Let $(\Gamma,\tau)$ be a translation quiver, and $x$ a vertex of $\Gamma$. If  $x =\tau^n x$ for some $n >0$, we say that $x$ is $\tau$-\emph{periodic}. A module $M \in \CM(R)$ is said to be $\tau$-periodic if it corresponds to a $\tau$-periodic vertex in the AR quiver of $R$, i.e., $M \cong \tau^n M$. When $R$ is Gorenstein of dimension one, we will omit the prefix ``$\tau$-'' and just say $M$ is periodic. \end{defn}

The following is well-known.

\begin{lem} \label{lem:periodic} If $(\Gamma,\tau)$ is a connected  translation quiver containing a $\tau$-periodic vertex, then all of its vertices are $\tau$-periodic. \end{lem}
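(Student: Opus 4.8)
The plan is to reduce the statement to a purely combinatorial fact about the translation $\tau$ acting on the vertex set and then exploit connectedness. First I would fix a $\tau$-periodic vertex $x_0$, so $\tau^n x_0 = x_0$ for some $n>0$, and define the set $S = \{x \in \Gamma_0 \mid x \text{ is } \tau\text{-periodic}\}$. The goal is to show $S = \Gamma_0$; since $\Gamma$ is connected as a translation quiver (and in particular its underlying undirected graph, together with the $\tau$-action, cannot be split), it suffices to show that $S$ is nonempty, closed under $\tau$ and $\tau^{-1}$, and ``closed under arrows'' in the sense that if $x \in S$ and there is an arrow $x \to y$ or $y \to x$ then $y \in S$. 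Nonemptiness is the hypothesis, and closure under $\tau^{\pm 1}$ is immediate because $\tau$ is an automorphism: if $\tau^n x = x$ then $\tau^n(\tau^{\pm 1} x) = \tau^{\pm 1} x$.

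The heart of the argument is the arrow-closure step. Suppose $x \in S$, say $\tau^n x = x$, and there is an arrow $x \to y$. Using the defining property of a stable translation quiver, $y^- = \tau(y)^+$, equivalently an arrow $x \to y$ exists iff an arrow $\tau(y) \to x$ exists. Applying this together with the fact that $\tau$ is a quiver automorphism, I would show that the arrow $x \to y$ propagates to an arrow $\tau^{-n} x \to \tau^{-n} y$, i.e. $x \to \tau^{-n} y$. So both $y$ and $\tau^{-n} y$ lie in $y^+ $... more precisely, I want to produce a bound forcing $y$ into a finite $\tau$-orbit. The clean way: the set $x^+$ is finite (local finiteness), and $\tau^{jn}$ fixes $x$ for all $j$, hence $\tau^{jn}$ permutes the finite set $x^+$; since $y \in x^+$, the orbit $\{\tau^{jn} y \mid j \in \ZZ\}$ is contained in $x^+$ and therefore finite, so $y$ is $\tau$-periodic. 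The case of an incoming arrow $y \to x$ is handled symmetrically using that $x^-$ is finite and $\tau^{jn}$ permutes it. This shows $S$ is closed under passing along arrows in either direction.

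Finally I would invoke connectedness: the underlying undirected graph of $\Gamma$ is connected (a translation quiver that is connected cannot be written as a disjoint union of two $\tau$-stable subquivers, and the subquiver on $S$ and the subquiver on its complement are each $\tau$-stable and have no arrows between them, by the arrow-closure just established). Hence $\Gamma_0 \setminus S = \varnothing$, i.e. every vertex is $\tau$-periodic, which is the claim. The main obstacle I anticipate is making the ``no arrows between $S$ and its complement'' step airtight — one must check that an arrow from a periodic vertex to a non-periodic one, or vice versa, is genuinely impossible, which is exactly what the finiteness-of-$x^\pm$ plus permutation argument delivers; once that is in place, connectedness finishes the job with no further calculation.
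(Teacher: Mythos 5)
Your proof is correct and takes essentially the same approach as the paper: both hinge on the observation that if $\tau^n x = x$ then $\tau^n$ permutes the finite sets $x^+$ and $x^-$, so every neighbour of a periodic vertex lies in a finite $\tau$-orbit and is therefore periodic, after which connectedness propagates periodicity to all vertices. The paper phrases the final step as an induction rather than via your $\tau$-stable subset $S$, but the content is identical.
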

\begin{proof}  If $x$ is a vertex in $\Gamma$ and $\tau^n x=x$, then  $\tau^n$ induces a permutation on the finite set $x^-$, and so some power of $\tau^n$ stabilizes $x^-$ pointwise. Thus each vertex in $x^-$ is $\tau$-periodic; likewise for $x^+$, so every vertex in $\Gamma$ is $\tau$-periodic by induction. \qed \end{proof}

\begin{defn} \label{defn:periodic-component} We say that a connected translation quiver is \emph{periodic} if one, equivalently all, of its vertices is $\tau$-periodic. \end{defn}

\begin{defn} \label{defn:tube} A valued stable translation quiver $\Gamma$ is called a \emph{tube} if $\Gamma \cong \ZZ A_\infty /\langle \tau^n \rangle$ for some $n \ge 1$. If $n=1$, $\Gamma$ is called a \emph{homogeneous} tube.\end{defn}

\begin{remark} \label{rmk:tube} Let $\Gamma$ be a connected periodic stable translation quiver, and suppose the valued tree class of $\Gamma$ is $A_\infty$. Then $\Gamma$ is a tube. To see this, let $\Pi$ be an admissible group of automorphisms of $\ZZ A_\infty$ such that $\Gamma \cong \ZZ A_\infty/\Pi$. Note that every automorphism of the stable translation quiver $\ZZ A_\infty$ is of the form $\tau^n$ for some $n \ge 0$. Thus $\Pi=\langle \tau^n \rangle$ for some $n \ge 0$; and the periodicity implies $n \ge 1$. \end{remark}

\notation \label{push} If $R$ is Gorenstein of dimension one, and $M $ is  an indecomposable in $L_p(R)$, define an $R$-module $\push(M)$ as follows. If $M$ is free, let $\push(M)=0$. Otherwise let $\push(M)$ denote the unique module (up to isomorphism) such that there exists an AR sequence 
$0 \to  M \to  \push (M) \to  \syz^{-1}_R(M) \to  0$. More generally, if $M =\bigoplus_{i=1}^n M_i$ with each $M_i$ in $L_p(R)$, then we set $\push(M)=\bigoplus_{i=1}^n \push(M_i)$.

\begin{notation} \label{multiplicity} (See, e.g.,  \cite[14.1-14.6]{Matsumura}.) For an $R$-module $M$, let $e(M)$ denote the multiplicity of $M$. This can  be defined as $e(M)=\lim_{n \rightarrow \infty} \frac{d!}{n^d} \length(M/\m^n M)$, where $d=\dim R$, but the reader may  ignore this definition; we use only the following properties:

\begin{itemize}
\item  If $0 \to M' \to M \to M'' \to 0$ is exact, then $e(M)=e(M')+e(M'')$.

\item For all $M \in \CM(R)$, $e(M)$ is a positive integer.

 \end{itemize} \end{notation}

\begin{notation} \label{e avg}  Define a function  $e_{\avg}$ from  $\tau$-periodic maximal Cohen-Macaulay $R$-modules  to $\QQ_{>0}$ as follows: If $M$ is $\tau$-periodic of period $n$, let $e_{\avg}(M)=\frac{1}{n}\sum_{i=0}^{n-1} e(\tau^i(M))$. \end{notation}

\begin{lem} \label{lem:e_avg} Assume $R$ is Gorenstein of dimension one, and $M \in L_p(R)$ is indecomposable and periodic. If $\push M=X \oplus F$ where $X$ has no free direct summands and $F$ is a (possibly zero) free module, then $X$ is periodic, and $e_{\avg}(\push M) \le 2 e_{\avg}(M)$. \end{lem}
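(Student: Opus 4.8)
The plan is to combine the additivity of the multiplicity $e$ along the Auslander--Reiten sequences with the $\tau$-periodicity of the component of $M$. Throughout, recall from~\ref{sit:Gor 1} that $\tau$ coincides with $\syz_R$ on $L_p(R)$, so $\syz^{-1}_R(M)=\tau^{-1}(M)$; let $n$ be the period of $M$, i.e.\ $\tau^n M\cong M$, so that $\tau^{-1}(M)\cong\tau^{n-1}(M)$. For each $i$, by Notation~\ref{push} the module $\push(\tau^i M)$ sits in the AR sequence $0\to\tau^i M\to\push(\tau^i M)\to\tau^{i-1}M\to 0$.

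First I would pin down the nonfree summands of each $\push(\tau^i M)$. By~\ref{sit:irreducible}, the nonfree indecomposable summands of $\push(M)$ are precisely the $N$ admitting an irreducible morphism $M\to N$; these are out-neighbours of $M$ in the stable AR quiver, hence lie in the stable AR component $C$ of $M$. As $C$ is a connected translation quiver with the $\tau$-periodic vertex $M$, Lemma~\ref{lem:periodic} makes every vertex of $C$ periodic, and $\tau$ restricts to a permutation of the vertices of $C$. Now Lemma~\ref{dim over k_M} identifies the multiplicity of such an $N$ in $\push M$ with $\dim_{k_N}\Irr(M,N)$, and Lemma~\ref{lem:valued translation}, applied repeatedly inside $C$, gives $v(\tau^i M\to\tau^i N)=v(M\to N)$; hence the multiplicity of $\tau^i N$ in $\push(\tau^i M)$ equals that of $N$ in $\push M$. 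Writing $\push M=X\oplus F$ as in the statement, this says $\push(\tau^i M)\cong\tau^i X\oplus F_i$ for some free module $F_i$. Taking $i=n$ and comparing nonfree parts (Krull--Schmidt) yields $\tau^n X\cong X$, so $X$ is periodic with period dividing $n$ --- the first assertion of the lemma.

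Next I would run the counting. Additivity of $e$ along the AR sequence above gives $e(\push(\tau^i M))=e(\tau^i M)+e(\tau^{i-1}M)$, while $\push(\tau^i M)\cong\tau^i X\oplus F_i$ gives $e(\push(\tau^i M))\ge e(\tau^i X)$. Summing the identity over $i=0,\dots,n-1$ and using that $\{\tau^{i-1}M\}_{i=0}^{n-1}$ is just a reindexing of $\{\tau^i M\}_{i=0}^{n-1}$ (since $\tau^{-1}M\cong\tau^{n-1}M$), the right side collapses to $2\sum_{i=0}^{n-1}e(\tau^i M)=2n\,e_{\avg}(M)$. Therefore $n\,e_{\avg}(X)=\sum_{i=0}^{n-1}e(\tau^i X)\le\sum_{i=0}^{n-1}e(\push(\tau^i M))=2n\,e_{\avg}(M)$, i.e.\ $e_{\avg}(X)\le 2e_{\avg}(M)$; since free direct summands do not affect $e_{\avg}$, this is exactly $e_{\avg}(\push M)\le 2e_{\avg}(M)$.

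The step I expect to be the main obstacle is identifying the nonfree part of $\push(\tau^i M)$ with $\tau^i$ applied to the nonfree part of $\push M$: this is where one genuinely needs that $\tau$ is a vertex-automorphism of the periodic component $C$, so that $\tau^{\pm i}$ is defined on all the modules in play, together with the $\tau$-invariance of irreducible-morphism multiplicities coming from Lemmas~\ref{dim over k_M} and~\ref{lem:valued translation}. The one delicate piece of bookkeeping is that $\tau$ does not act on the free module, so the free parts $F_i$ of $\push(\tau^i M)$ need not be $\tau$-invariant; but they only enlarge $e(\push(\tau^i M))$, hence only help the inequality, and may simply be discarded.
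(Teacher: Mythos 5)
Your argument is correct and follows essentially the same route as the paper's proof: identify the nonfree part of $\push(\tau^i M)$ with $\tau^i X$ up to free summands via Lemmas~\ref{dim over k_M} and~\ref{lem:valued translation}, use additivity of $e$ along the AR sequences, sum over a common period, and divide. The only (harmless) difference is that you derive the periodicity of $X$ via Krull--Schmidt from $\push(\tau^n M)\cong \push(M)$ in addition to Lemma~\ref{lem:periodic}, and you spell out the bookkeeping of multiplicities that the paper leaves implicit.
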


\begin{proof} We know $X$ is periodic from Lemma~\ref{lem:periodic}. Note that if $N \in \CM(R)$ is periodic, then for any $j \in \ZZ$, and $n \in \NN$  a  multiple of the period of $N$,  $\sum_{i=j}^{n+j-1} e(\tau^i N)=n e_{\avg}(N)$.  For each integer $i$, we have by Lemma~\ref{lem:valued translation} an AR sequence $0 \to  \tau^{i+1} M \to  F_i\oplus \tau^i X \to  \tau^i M \to  0$,  where $F_i$ is a (possibly zero) free module. So  $e(\tau^i X) \le e(\tau^{i+1}M)+e(\tau^i M)$, hence $\sum_{i=1}^n e(\tau^i X) \le \sum_{i=1}^n e(\tau^{i+1}M)+\sum_{i=1}^n e(\tau^i M)$ for each $n \in \NN$. This inequality gives the desired result by  taking  $n$ to be a common multiple of the periods of $M$ and $X$, and  dividing  both sides by $n$. \qed \end{proof}

The following goes back at least to \cite{Happel1980} (in a slightly different setting).

\begin{lem} \label{lemma:subadditive-for-periodic} Let $C$ be a connected $\tau$-periodic valued stable translation quiver which is a subquiver of the stable AR quiver of $R$. Then the valued tree class of $C$ admits a subadditive function (Definition~\ref{defn:subadditive}). \end{lem}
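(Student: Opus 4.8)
The plan is to transport the function $e_{\avg}$ (Notation~\ref{e avg}) from $C$ to its tree class and show it becomes subadditive there. The underlying mechanism is that $e(-)$ is additive on every AR sequence, while the valuation of the AR quiver records the multiplicities of the middle term (Lemma~\ref{dim over k_M} together with Definition~\ref{defn:AR quiver}); the possible free direct summands of the middle term, which are \emph{not} vertices of the stable AR quiver, are exactly what turns ``additive'' into ``subadditive''.

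First I would fix, via the Riedtmann Structure Theorem (Theorem~\ref{RST}), a directed tree $T$ whose underlying graph is the tree class of $C$, together with a covering $\pi \colon \ZZ T \to C$; by Remark~\ref{rmk:valued tree class} this makes $T$ a valued directed tree and $\ZZ T$ a valued stable translation quiver, and the valuation of $\ZZ T$ agrees with the pullback along $\pi$ of the valuation of $C$, since both satisfy the defining $\tau$-relation of a valued stable translation quiver (Lemma~\ref{lem:valued translation}, Remark~\ref{valued ZZ T}) and agree on the slice $\{(0,x) : x \in T\}$. Because $C$ is periodic, Lemma~\ref{lem:periodic} makes every vertex of $C$ periodic, so $e_{\avg}$ is defined at each module that occurs; and since $e(-)$ is a positive integer on $\CM(R) \supseteq L_p(R)$, the assignment $f \colon T_0 \to \QQ_{>0}$, $f(x) = e_{\avg}(N_x)$, is well defined and $\QQ_{>0}$-valued, where $N_x$ denotes the module at the vertex $\pi(0,x)$ of $C$.

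Next I would verify subadditivity at each $x \in T_0$. The combinatorial crux is that the predecessors of $(0,x)$ in $\ZZ T$, namely $\{(0,w) : w \to x \text{ in } T\} \cup \{(1,y) : x \to y \text{ in } T\}$, project bijectively (the covering property) onto the vertices $M$ of $C$ admitting an arrow $M \to N_x$; by the equivalences in~\ref{sit:irreducible} these are exactly the nonfree indecomposable summands of the middle term $E$ of the AR sequence $0 \to \tau N_x \to E \to N_x \to 0$, and by Lemma~\ref{dim over k_M}(1) the multiplicity of such an $M$ in $E$ is the first component of $v(M \to N_x)$. Reading these first components off Remark~\ref{valued ZZ T} — it is $d_{wx}$ for the predecessor $(0,w)$, where $v(w\to x) = (d_{wx},d_{xw})$, and $d_{yx}$ for $(1,y)$, where $v(x\to y) = (d_{xy},d_{yx})$ — and noting that the module at $(1,y)$ is $\tau N_y$, additivity of $e$ on the AR sequence gives
\[ e(\tau N_x) + e(N_x) = e(E) \;\ge\; \sum_{w \to x} d_{wx}\, e(N_w) + \sum_{x \to y} d_{yx}\, e(\tau N_y), \]
with inequality precisely because $E$ may contain a free summand. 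Applying this same estimate to the AR sequence ending in $\tau^j N_x$ for $j = 0,\dots,n-1$, where $n$ is a common multiple of the periods of $N_x$ and of its finitely many neighbours, summing over $j$, and dividing by $n$ replaces each $\tfrac1n\sum_j e(\tau^j(-))$ by $e_{\avg}(-)$ (cf.\ the proof of Lemma~\ref{lem:e_avg}) and yields
\[ 2\, e_{\avg}(N_x) \;\ge\; \sum_{w \to x} d_{wx}\, e_{\avg}(N_w) + \sum_{x \to y} d_{yx}\, e_{\avg}(N_y) \;=\; \sum_{y} d_{yx}\, f(y), \]
the last sum over all neighbours $y$ of $x$ in $T$; this is exactly condition (i) of Definition~\ref{defn:subadditive} for $f$ on the valued tree class.

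The main obstacle is bookkeeping rather than conceptual content: one must keep straight that the predecessors of the slice vertex $(0,x)$ in $\ZZ T$ split into the $T$-predecessors and the $T$-successors of $x$, so that the single AR-sequence relation at $N_x$ simultaneously accounts for \emph{all} neighbours of $x$ in $T$, and one must check that in each of the two cases the correct component of the valuation appears — this is the only place where Remark~\ref{valued ZZ T} and Lemma~\ref{lem:valued translation} really do work. The one substantive subtlety is to remember that free summands of the middle term are absent from the stable AR quiver, which is exactly why the inequality need not be an equality.
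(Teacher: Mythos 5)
Your proof is correct and follows essentially the same route as the paper's: transport $e_{\avg}$ along the covering $\ZZ T \to C$, convert the sum over $T$-neighbours of $x$ into a sum over one side of the slice using the $\tau$-invariance of the valuation and of $e_{\avg}$, and conclude from additivity of $e$ on AR sequences plus the invisibility of free summands in the stable quiver. The only cosmetic difference is that you work with the predecessors of $(0,x)$ and the AR sequence ending at $N_x$, reproving the key inequality inline via Lemma~\ref{dim over k_M}(1), whereas the paper passes to the $\ZZ T$-successors $x^+$ and cites the prepackaged inequality of Lemma~\ref{lem:e_avg}.
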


\begin{proof} Let $T$ denote the valued tree class  (Definition~\ref{defn:valued tree class}) of $\Gamma$. By definition of $T$, we have a value-preserving covering $\phi \colon \ZZ T \ra C$.  Define a function $f \colon \ZZ T \ra \QQ_{>0}$ by the rule $f(x)=e_{\avg}(\phi(x))$. 
We claim that the restriction of $f$ to $T$ is a subadditive function.  That is,   $2f(x) \ge  \sum_{y \in T } d_{yx} f(y)$, for each vertex $x$ of $T$. By Lemma~\ref{lem:valued translation}, $d_{yx}=d_{(\tau^{-1} y)x}$ for all $x, y \in C$, hence for all $x, y \in \ZZ T$. In what follows, for any $x \in T$, the sets $x^-$ and $x^+$ will always be taken  with respect to $\ZZ T$;  to signify the predecessors of $x$ with respect to $T$ we can use $x^- \cap T$.  If $x \in T$,  then  $x^+$ equals the disjoint union of $x^+ \cap T$ and $\tau^{-1}(x^- \cap T)$. Now, we have \begin{center}$\displaystyle \sum_{y \in T } d_{yx} f(y)=\displaystyle \sum_{y \in x^- \cap T} d_{yx} f(y) + \sum_{y \in x^+ \cap T} d_{yx} f(y)$  $=\displaystyle \sum_{y \in x^- \cap T} d_{\tau^{-1}yx} f(\tau^{-1}y) + \sum_{y \in x^+ \cap T} d_{yx} f( y)=\sum_{y\in x^+} d_{yx} f(y)$. \end{center} So subadditivity of $f$ is equivalent to $2f(x) \ge \sum_{y\in x^+} d_{yx} f(y)$. Since $\phi$ is a covering, $\sum_{y\in x^+} d_{yx} f(y)=\sum_{y \in \phi(x)^+} d_{y \phi(x)} e_{\avg}(y)$, which is  bounded by $2 e_{\avg}(\phi(x))$ by Lemma~\ref{lem:e_avg}. So $f$ is subadditive. \qed \end{proof}

\begin{lem} \label{no loops} Assume $R$ is Gorenstein, let $M \in L_p(R)$ be a nonfree indecomposable, and suppose there exists an irreducible map from $M$ to itself. Let $C$ denote  the  component of the stable AR quiver containing $M$, and assume $C$ is infinite. Then $C$ is a homogeneous tube with a loop at the end:

\begin{tikzpicture} \node[above] at (1,0) (1){$M=X_0$};
\draw[->](2,.4)--(3,.4);
\draw[->](3,.2)--(2,.2);

\node[above] at (3.4,0){$X_1$};

\draw[->](3.8,.4)--(4.8,.4);
\draw[->](4.8,.2)--(3.8,.2);

\node[above] at (5.2,0){$X_2$};

\draw[->](5.6,.4)--(6.6,.4);
\draw[->](6.6,.2)--(5.6,.2);

\node[above] at (7,0){$X_3$};

\draw[->](7.4,.4)--(8.4,.4);
\draw[->](8.4,.2)--(7.4,.2);

\node[above] at (8.8,0){$\cdots$};

\path
(1) edge [loop left] node {} (1);
\end{tikzpicture} 

In particular, $\tau X_i \cong X_i$ for all $X_i \in C$. 
\end{lem}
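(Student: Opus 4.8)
The plan is to combine the subadditive-function machinery (Lemma~\ref{lemma:subadditive-for-periodic} together with Lemma~\ref{lemma:Dynkin diagrams}) with the self-map structure coming from Theorem~\ref{main theorem} and Remark~\ref{AR seq as pushout}. First I would set up the component $C$ and the periodicity. Since $R$ is Gorenstein of dimension one, every nonfree indecomposable in $L_p(R)$ has $\tau(M)=\syz_R(M)$, which is again in $L_p(R)$ and nonfree; so $C$ is a genuine stable translation quiver, hence (being connected and infinite) has a well-defined valued tree class via Theorem~\ref{RST}. The hypothesis gives an irreducible self-map $M\to M$, hence a loop at the vertex $M$ in the (unstable, but here stable) AR quiver; I need to argue $C$ is periodic so that Lemma~\ref{lemma:subadditive-for-periodic} applies. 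The key point is \ref{sit:irreducible}: an irreducible map $M\to M$ forces $M$ to be a summand of the middle term $E$ of the AR sequence $0\to M\to E\to \syz^{-1}_R M\to 0$, i.e. $M$ appears in $\push(M)$. Using Remark~\ref{AR seq as pushout}, the middle term is $X\cong (M\oplus F)/\{(-h(m),\iota(m))\}$ for $F$ free, and one computes its multiplicity is $e(M)+e(\syz^{-1}_R M)\le e(M)+e(F)$; more to the point, extracting a copy of $M$ from $E$ leaves something of strictly smaller multiplicity, which (after iterating $\tau$) will pin down the tree class.

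Next I would invoke the tree-class dichotomy. By Lemma~\ref{lemma:subadditive-for-periodic}, once $C$ is periodic its valued tree class $T$ admits a subadditive function $f$; since $C$ is infinite, $T$ is infinite, so by Lemma~\ref{lemma:Dynkin diagrams}(1) $T$ is an infinite Dynkin diagram. The loop at $M$ means $\Irr(M,M)\ne 0$, so $M$ is a summand of $E$; by Lemma~\ref{dim over k_M} this says $d_{MM}\ge 1$ in the AR quiver. Translating through the covering $\phi\colon \ZZ T\to C$ and using that a loop can only come from the quotient identifying a vertex with a neighbor, I get that $T$ must be $A_\infty$ and that $\Pi$ is generated by a single $\tau^n$; combined with periodicity, $n\ge 1$, and the fact that the loop persists forces $n=1$. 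So $C\cong \ZZ A_\infty/\langle\tau\rangle$ is a homogeneous tube, and $\tau X_i\cong X_i$ for every vertex. The labeling $X_0=M, X_1, X_2,\dots$ then comes from walking along the $A_\infty$ arm, with the double arrows recording that $v(X_i\to X_{i+1})=(1,1)$ — this last point needs Remark~\ref{rmk:quiver-symmetry} (so one should note the hypothesis that $k$ is algebraically closed, or argue the valued version directly via Lemma~\ref{lem:valued translation}) and the computation of $\push(X_i)$.

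I would organize the argument as: (i) $C$ is a connected infinite stable translation quiver and $\tau$ is everywhere defined; (ii) the self-irreducible map gives $M\mid \push(M)$, hence via multiplicities and Lemma~\ref{lem:e_avg} that $e_{\avg}$ is a bounded subadditive function that is \emph{not} additive on the tree class (because extracting the copy of $M$ strictly decreases multiplicity at the relevant vertex) — wait, more carefully: the presence of the loop makes the standard "mesh" equality fail, which is exactly what rules out the additive case; (iii) apply Lemma~\ref{lemma:Dynkin diagrams}(2) to conclude the tree class is $A_\infty$; (iv) establish periodicity either directly (the loop gives $\tau M$ a neighbor $M$, and in $\ZZ A_\infty$ modulo an admissible group a loop forces the period to collapse to a homogeneous tube) or by the $e_{\avg}$-boundedness argument, and read off $\tau X_i\cong X_i$; (v) compute the valuations to draw the picture.

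The main obstacle I expect is step (ii)/(iv): making rigorous the claim that a loop in a component with a subadditive function forces the \emph{additive} case to fail and the tree class to be exactly $A_\infty$ with homogeneous period. The subtlety is precisely the one the paper flags — the penultimate sentence of \cite[Lemma 1.23]{AKM} is false — so the careful point is that one cannot argue purely combinatorially about $\ZZ T$ having a loop; instead one must trace the loop back through the covering $\phi$ and the admissible group $\Pi$, and use that admissibility forbids a $\Pi$-orbit from meeting $\{x\}\cup x^+$ twice unless the relevant arm is $A_\infty$ and $\Pi=\langle\tau\rangle$. Getting this implication right, rather than hand-waving "a loop means the tube has radius one," is the crux, and is presumably why the authors isolate it as Lemma~\ref{no loops}.
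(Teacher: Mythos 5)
Your overall strategy matches the paper's: delete the loop to obtain a genuine stable translation quiver, use $e_{\avg}$ as a subadditive function on the tree class, observe that the deleted loop forces strict (non-additive) subadditivity, conclude $T\cong A_\infty$ by Lemma~\ref{lemma:Dynkin diagrams}(2), and then identify $C$ as a homogeneous tube. But there is a genuine gap at the very start: you never actually prove $\tau M\cong M$, and everything downstream depends on it. Lemma~\ref{lemma:subadditive-for-periodic} requires the component to be $\tau$-periodic, and $e_{\avg}$ (Notation~\ref{e avg}) is only \emph{defined} on $\tau$-periodic modules, so the "$e_{\avg}$-boundedness argument" you offer as one route to periodicity is circular, and your other route ("a loop in $\ZZ A_\infty$ modulo an admissible group forces the period to collapse") presupposes the tree class is already known to be $A_\infty$, which you derived using periodicity. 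The observation that the loop gives $\tau M\in M^-$ is not enough. The missing idea is the paper's opening step: if $\tau M\not\cong M$, then the irreducible maps $M\to M$ and $\tau M\to M$ force the middle term of the AR sequence ending in $M$ to contain $M\oplus\tau M\oplus N$; additivity of multiplicity gives $e(N)=0$, so the sequence is $0\to\tau M\to M\oplus\tau M\to M\to 0$, which splits by Miyata's theorem --- a contradiction. This single computation delivers both the periodicity needed to run the subadditive-function machinery and the "homogeneous" ($n=1$) part of the conclusion.

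A second, smaller gap: you take the labelling $M=X_0$ for granted, i.e.\ you do not rule out that the looped vertex sits at position $X_i$ with $i>0$ in the $A_\infty$ arm. The paper excludes this with another multiplicity count: a loop at $X_i$ ($i>0$) would give an AR sequence $0\to X_i\to X_i\oplus X_{i-1}\oplus X_{i+1}\oplus F\to X_i\to 0$, hence $e(X_i)\ge e(X_{i-1})+e(X_{i+1})$, contradicting the inequalities $2e(X_{i+1})>e(X_i)$ and $2e(X_{i-1})\ge e(X_i)$ coming from the AR sequences ending in $X_{i\pm1}$. Without this step the asserted picture (loop at the \emph{end} of the tube) is not established. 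Your instinct that the crux is "tracing the loop through the covering" is slightly misplaced --- once the loop is deleted, the Riedtmann machinery applies verbatim; the real work is in the two multiplicity arguments above.
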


\begin{proof}  First we show that $M \cong \tau M$. If not, then the AR sequence ending in $M$ is $0 \to \tau M \to M \oplus \tau M \oplus N \to M \to 0$ for some $N \in \CM(R)$. Then $e(N)=0$, hence  $N=0$. Now  Miyata's Theorem \cite[Theorem 1]{Miyata} says that the given AR sequence splits, which is a contradiction. So $\tau M \cong M$. 

 Since $C$ has a loop, it  does not satisfy the definition of stable translation quiver (Definition~\ref{defn:stable-translation-quiver}). But removing the loops in $C$  (and keeping all vertices and all non-loop arrows), we get a $\tau$-periodic connected stable translation quiver; call it $\Gamma$, and let $T$ denote valued tree class of $\Gamma$. Now $T$ admits a subadditive function given by $e_{\avg}$, as in the proof of~\ref{lemma:subadditive-for-periodic}. From the fact that $\Gamma$ is not a full subquiver of the AR quiver of $R$, it follows that  $e_{\avg}$ is strictly subadditive (i.e., not additive). As $\Gamma$ is infinite and $\tau$-periodic, $T$ must be infinite. Therefore $T \cong A_{\infty}$ by Lemma~\ref{lemma:Dynkin diagrams}, and $\Gamma \cong A_{\infty}/\langle \tau \rangle$, by Remark~\ref{rmk:tube}. So $\Gamma$ has the form \begin{center}
\begin{tikzpicture} \node[above] at (1.6,0) (1){$X_0$};
\draw[->](2,.4)--(3,.4);
\draw[->](3,.2)--(2,.2);

\node[above] at (3.4,0){$X_1$};

\draw[->](3.8,.4)--(4.8,.4);
\draw[->](4.8,.2)--(3.8,.2);

\node[above] at (5.2,0){$X_2$};

\draw[->](5.6,.4)--(6.6,.4);
\draw[->](6.6,.2)--(5.6,.2);

\node[above] at (7,0){$X_3$};

\draw[->](7.4,.4)--(8.4,.4);
\draw[->](8.4,.2)--(7.4,.2);

\node[above] at (8.8,0){$\cdots$};

\end{tikzpicture}. \end{center} Suppose $M=X_i$ for some $i>0$. Then we have an AR sequence $0 \to X_i \ra X_i \oplus X_{i-1} \oplus X_{i+1} \oplus F \ra X_i \ra 0$, for some free module $F$,   so $e(X_i) \ge e(X_{i-1})+e(X_{i+1})$. But the AR sequences ending in $X_{i-1}$ and $X_{i+1}$ give us $2  e(X_{i+1} ) > e(X_{i})$ and  $2e(X_{i-1}) \ge e(X_{i})$. These inequalities contradict the previous one, so $M=X_0$. \qed
\end{proof}

The following ``Maranda-type result'' corresponds to Lemma 1.24 in \cite{AKM}. In our  setting, namely that of a Cohen-Macaulay complete local ring, this result is well-known (but possibly has only been stated for the case when the ring is an isolated singularity). The following proof can be found, for example, in \cite[proposition 15.8]{Leuschke-Wiegand:BOOK} and its corollaries.  

\begin{lem} \label{maranda}  Let $M$ and $N$ be nonisomorphic indecomposables in $ L_p(R)$, and let $x\in \m$ be a nonzerodivisor. Then there exists $i \ge 1$ such that  $M/x^iM$ and $N/x^i N$ are nonisomorphic indecomposable modules.\end{lem}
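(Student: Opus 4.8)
The plan is to reduce the statement to a standard lifting/faithfully-flat-completion argument, combined with the Krull--Schmidt property in $\CM(R)$. First I would set $\Rhat_i = R/x^iR$, which is again a (not necessarily reduced) noetherian local ring, and consider the functor $M \mapsto M/x^iM$ from $\CM(R)$ to $\mod \Rhat_i$. Since $M$ and $N$ lie in $\CM(R)$ and $x$ is a nonzerodivisor on $R$ (hence on every $\CM$ module), we have $\operatorname{depth}_R M = 1$, so $M/xM$ is a module of finite length over $R/xR$ and $\Tor_1^R(M,R/x^iR)=0$; thus $0 \to M \xrightarrow{x^i} M \to M/x^iM \to 0$ is exact, and similarly for $N$. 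This is the ``Maranda'' setup.

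Next I would use the fact that $R$ is complete (so $M$, $N$, $M/x^iM$, $N/x^iN$ all satisfy Krull--Schmidt) to separate the two claims: (i) for $i \gg 0$, $M/x^iM$ and $N/x^iN$ remain \emph{indecomposable}, and (ii) for $i \gg 0$, they remain \emph{nonisomorphic}. For (i), the idea is that if $M/x^iM$ decomposed for infinitely many $i$, one could lift a nontrivial idempotent of $\End_{\Rhat_i}(M/x^iM)$ back to a nontrivial idempotent of $\End_R(M)$: the key input is that $\End_R(M)$ is a module-finite $R$-algebra, hence $\m$-adically complete, and the reduction maps $\End_R(M) \to \End_{\Rhat_i}(M/x^iM)$ are, for $i$ large compared to a fixed bound coming from a presentation of $M$, surjective with kernel inside the radical; then idempotents lift. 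Since $M$ is indecomposable, $\End_R(M)$ is local, so no nontrivial idempotents exist, a contradiction. For (ii), a parallel argument: an isomorphism $M/x^iM \cong N/x^iN$ for large $i$ lifts (using the same surjectivity of reduction on $\Hom_R(M,N)$ and $\Hom_R(N,M)$, modulo a radical) to mutually inverse maps $M \to N$ and $N \to M$, forcing $M \cong N$, contrary to hypothesis. The cleanest way to package both (i) and (ii) uniformly is to observe that for $i$ sufficiently large the natural map $\End_R(M \oplus N) \to \End_{\Rhat_i}\big((M\oplus N)/x^i(M\oplus N)\big)$ is surjective with kernel contained in the Jacobson radical, so it induces an isomorphism on idempotents up to conjugacy, and $M/x^iM$, $N/x^iN$ are then the (distinct, indecomposable) pieces matching $M$, $N$.

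The main obstacle I anticipate is making precise the claim that the reduction map on endomorphism/hom rings is surjective with radical kernel for $i \gg 0$; naively $\End_R(M)/x^i\End_R(M) \to \End_{\Rhat_i}(M/x^iM)$ need not be surjective for small $i$, and one needs a uniform bound. The standard remedy is to fix a finite free presentation $R^a \to R^b \to M \to 0$, note that $\Hom_R(M,N)$ sits inside $\Hom_R(R^b,N) = N^b$ as the kernel of a fixed matrix, reduce mod $x^i$, and use the Artin--Rees lemma to compare $(\ker)/x^i \cap \ker$ with $\ker(\text{mod } x^i)$; Artin--Rees gives a constant $c$ such that for $i > c$ the discrepancy is controlled, yielding the surjectivity and the radical estimate. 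Since the statement explicitly cites \cite[Proposition 15.8]{Leuschke-Wiegand:BOOK} and its corollaries, I would defer the technical Artin--Rees bookkeeping to that reference and present the argument at the level of ``lift idempotents and isomorphisms through the complete ring $R$,'' which is exactly how that source proceeds.
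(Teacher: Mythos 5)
Your overall strategy (a Maranda-type lifting argument through the complete ring, applied to idempotents and to a mutually inverse pair of maps) is the right one, and it is the same circle of ideas the paper uses. But the pivotal technical claim as you state it is false: the reduction map $\End_R(M)\to\End_{R/x^iR}(M/x^iM)$ is in general \emph{not} surjective for large $i$ --- nor for any $i$. Applying $\Hom_R(M,-)$ to $0\to M\xrightarrow{x^i}M\to M/x^iM\to 0$ and using $\Hom_R(M,M/x^iM)=\End_{R/x^iR}(M/x^iM)$, the cokernel of that reduction map is the kernel of $x^i$ acting on $\Ext^1_R(M,M)$; since $\Ext^1_R(M,M)$ has finite length (this is where $M\in L_p(R)$ enters, a point your proposal never uses), for large $i$ the cokernel is all of $\Ext^1_R(M,M)$, which is typically nonzero. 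So increasing $i$ makes surjectivity worse, not better, and the uniform bound you hope to extract from Artin--Rees cannot deliver the one-level surjectivity your argument leans on.

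The correct statement --- and what the paper actually proves --- is a two-level comparison: after replacing $x$ by a power annihilating $\Ext^1_R(M,N)$ (and $\Ext^1_R(M,M)$), the image of $\End_R(M)\to\End(M/xM)$ coincides with the image of $\End(M/x^2M)\to\End(M/xM)$, and likewise for $\Hom(M,N)$. An endomorphism of $M/x^2M$ need not lift to $M$, but its reduction to $M/xM$ does. The paper then takes $i=2$: a nontrivial idempotent $e$ of $\End(M/x^2M)$ reduces into the image of the local ring $\End_R(M)$, hence to $0$ or $1$ in $\End(M/xM)$; say $0$, so $e(M)\subseteq xM/x^2M$ and $e^2=0$, a contradiction. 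For non-isomorphism, an isomorphism $M/x^2M\cong N/x^2N$ and its inverse are congruent mod $x$ to genuine maps $M\to N$ and $N\to M$, which are surjective by Nakayama, and a surjective endomorphism of a noetherian module is an isomorphism. Your plan can be repaired along these lines (the Guralnick/Artin--Rees version gives the analogous statement with $x^{i+c}$ versus $x^i$ in place of $x^2$ versus $x$, and in fact without the $L_p$ hypothesis), but as written the step "surjective with kernel inside the radical, so idempotents and isomorphisms lift" does not go through.
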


\begin{proof}  Since $M$ lies in  $ L_p(R)$, $\Ext^1_R(M,N)$ has finite length (since for any nonmaximal prime $\p$, we have $0=\Ext^1_{R_{\p}}(M_{\p},N_{\p})=\Ext^1_R(M,N)_{\p}$). Therefore we may assume, after replacing $x$ by a suitable power of itself, that $x \Ext^1_R(M,N)=0$. By applying $\Hom_R(M, \_)$ to the commutative exact diagram
\begin{center}
$
\begin{gathered} \xymatrix {0 \ar[r] & N \ar[r]^{x^2} \ar[d] _x & N \ar[r] \ar@{=}[d] & N/x^2N \ar[r] \ar[d] & 0\\
0 \ar[r] & N \ar[r]^{x} & N \ar[r]& N/xN \ar[r] & 0\\}
\end{gathered}$, \end{center}
we obtain a commutative exact diagram
\begin{equation} \label{maranda diagram}
\begin{gathered} \xymatrix {\Hom_R(M,N) \ar[r] \ar@{=}[d] &\Hom_R(M,N/x^2M) \ar[r] \ar[d] & \Ext^1_R(M,N) \ar[d]^x\\
\Hom_R(M,N) \ar[r] & \Hom_R(M,N/xM) \ar[r] & \Ext^1_R(M,N)\\
} \end{gathered} . \end{equation}

Consider the maps $\theta \colon \Hom_R(M,N) \ra \Hom_R(M/xM,N/xN)$ and $\theta_2 \colon \Hom_R(M/x^2M,N/x^2N) \ra \Hom_R(M/xM, N/xN)$ given by tensoring all maps with $R/(x)$. Notice that in diagram~\eqref{maranda diagram}, the horizontal and vertical maps into $\Hom_R(M,N/xM)$ can be identified with $\theta$ and $\theta_2$ respectively, while the rightmost vertical map is zero. Therefore a diagram chase  yields \begin{equation} \label{eqn:theta_2}  \im(\theta)=\im(\theta_2). \end{equation}
We claim $i=2$ will suffice. Suppose $M/x^2M$ is not indecomposable. Then there exists a nontrivial idempotent $e \in  \End_R(M/x^2M)$. Consider the equation \eqref{eqn:theta_2} in the case $M=N$; now $\theta$ and $\theta_2$ are of course ring homomorphisms. Since $\End_R M$ is (noncommutative-) local, so is $\im \theta$, and therefore  $\theta_2(e)$ is either 0 or 1. Since $1-e \in \End_R(M/x^2M)$ is also a nontrivial idempotent, we may assume $\theta_2(e)=0$, i.e. $\im e \subseteq xM/x^2M$. But then $e^2=0$ is a contradiction.

Now suppose $\varphi \colon M/x^2M \to N/x^2N$ is an isomorphism, with inverse $\psi \colon N/x^2N \to M/x^2M$. By~\eqref {eqn:theta_2}, there exist $\tilde{\varphi} \colon M \to N$ such that $\tilde{\varphi} \otimes_R (R/x)=\varphi \otimes_R (R/x)$, and $\tilde{\psi} \colon M \to N$ such that $\tilde{\psi} \otimes_R (R/x)=\psi \otimes_R (R/x)$. By  Nakayama's Lemma, $\tilde{\varphi}$ and $\tilde{\psi}$ are surjective. Thus  $\tilde{\psi} \tilde{\varphi}$ is a surjective endomorphism, equivalently, an isomorphism; and thus $\tilde{\varphi}$ is an isomorphism. \qed \end{proof}

\begin{lem} \label{maps involving R} Assume $\dim R=1$, and  let $M$ be an  indecomposable in $\CM(R)$. Then there exists an irreducible morphism $M \to R$ if and only if $M$ is isomorphic to a direct summand of $\m$. If $R$ is Gorenstein, there exists an irreducible morphism $R \to M$ if and only if $M$ is isomorphic to a direct summand of $\m^*$. \end{lem}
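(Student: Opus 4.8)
The plan is to prove both biconditionals by the same strategy, using the characterization of irreducible maps into/out of a module via AR sequences (situation~\ref{sit:irreducible}). Since $R$ is Gorenstein of dimension one, $R \in \CM(R)$, and the AR-translate $\tau$ is $\syz_R$ (see~\ref{sit:Gor 1}); but $R$ is free, so $\tau R$ is undefined and there is no AR sequence ending in $R$. Instead we must use the AR sequence \emph{starting} from $R$, which does exist: applying $(\_)^*$ to Theorem~\ref{Yo theorem} shows there is an AR sequence $0 \to R \to E \to \tau^{-1}R \to 0$ precisely because $R \in L_p(R)$. So the two directions are genuinely different and I would handle them separately.

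First I would prove the statement about irreducible morphisms $M \to R$. The point is that even though $R$ does not end any AR sequence, irreducible maps into $R$ are still governed by the AR sequence starting from $\syz_R(R) = \m$ (note $\syz_R(R)=\m$ since the minimal free cover of $R$ is $R \onto R$ with kernel... wait, $R$ is free, so one instead uses that the AR sequence ending in a module $N$ with $\tau N \cong R$ — but there is no such $N$ since $\tau N = \syz_R N$ is never free for $N$ nonfree indecomposable; rather, the right tool is: an irreducible map $M \to R$ exists iff $M$ is a summand of the middle term of the AR sequence whose \emph{right-hand} term would have to be... ). The cleaner route: by~\cite[Lemma 5.1 or similar]{Yoshino:book}, or directly, an irreducible morphism $M \to R$ with $R$ indecomposable projective corresponds to $M$ being a summand of $\rad(R)$-type data; concretely, irreducible maps $M \to R$ are classified by the AR sequence $0 \to \m \to E \to \syz^{-1}_R(\m) \to 0$... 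I would instead argue as follows. Any map $M \to R$ that is not a split epimorphism has image in $\m$ (since $M$ has no free summand if it is a nonfree indecomposable; and if $M \cong R$ the only irreducible maps $R\to R$ don't exist). So an irreducible $f: M \to R$ factors as $M \to \m \hookrightarrow R$; by irreducibility and the fact that $\m \hookrightarrow R$ is not a split epi (as $R$ is not regular), the first map $M \to \m$ must be a split mono — but also one shows conversely every split mono onto a summand of $\m$ composed with $\m \hookrightarrow R$ is irreducible, using that $\m \hookrightarrow R$ is itself irreducible (which follows because $R/\m$ has no proper nontrivial factorizations through $\CM(R)$, i.e. from the AR sequence ending in $\tau^{-1}\m$). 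The main obstacle is pinning down exactly why $\m \hookrightarrow R$ is irreducible and why irreducibility of the composite forces $M$ to be a \emph{whole} summand; this is a standard wiggle-lemma argument with the minimal generators of $\m$, comparing with $\edim R$.

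For the second biconditional, about irreducible morphisms $R \to M$, I would dualize: apply $(\_)^* = \Hom_R(\_,R)$, which on $\CM(R)$ is an exact duality (by~\ref{subsec:CM and Gor}) and preserves indecomposability and AR sequences (noted after Theorem~\ref{Yo theorem}), and sends irreducible morphisms to irreducible morphisms. Under this duality $R \mapsto R^* = R$, and an irreducible map $R \to M$ corresponds to an irreducible map $M^* \to R$. By the first part, such a map exists iff $M^*$ is a summand of $\m$, i.e. iff $M = M^{**}$ is a summand of $\m^*$. This is quick once the first part and the properties of $(\_)^*$ are in hand, so I expect essentially no additional difficulty here beyond bookkeeping.

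In summary, the structure is: (1) reduce "irreducible $M\to R$" to "$M$ a summand of $\m$" via the factorization through $\m \hookrightarrow R$ together with irreducibility of that inclusion, the latter being the crux; (2) deduce "irreducible $R \to M$" $\iff$ "$M$ a summand of $\m^*$" by applying the duality $(\_)^*$. I anticipate the author's proof to instead invoke an existing lemma of Yoshino identifying, for an indecomposable projective $P$ in a setting with AR sequences, the irreducible maps into and out of $P$ in terms of $\rad P$ and its dual — in which case step (1) is immediate and only the translation to the dimension-one Gorenstein language (where $\rad R = \m$, $\m^* = \Hom_R(\m,R)$) needs to be made explicit.
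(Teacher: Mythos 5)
Your reduction for the ``only if'' direction is exactly the paper's: an irreducible $f\colon M\to R$ is not a split epimorphism, hence not surjective (a surjection onto a projective splits), hence $\im f\subseteq\m$ since $R$ is local, so $f=\iota g$ with $\iota\colon\m\into R$ the inclusion; as $\iota$ is not a split epimorphism, $g$ is a split monomorphism and $M$ is a summand of $\m$. (Your parenthetical justification --- ``since $M$ has no free summand'' --- is not the reason; projectivity of $R$ is.) The dualization for the second biconditional is also exactly what the paper does, so those parts are fine.

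The genuine gap is the converse of the first biconditional, which you yourself flag as ``the crux'' but do not prove: that for each indecomposable summand $\m_i$ of $\m$, the inclusion $\iota_i\colon\m_i\into R$ is irreducible. Neither of your two hints closes it. Arguing via the AR sequence starting from $\m_i$ (by~\ref{sit:irreducible}) would require first knowing that $R$ is a summand of its middle term, which is essentially the statement to be proved; and no ``wiggle-lemma with minimal generators versus $\edim R$'' is involved. The paper's argument is a short direct factorization: suppose $\iota_i=hg$ with $h\colon L\to R$ not a split epimorphism. Then $h$ is not onto, so $\im h\subseteq\m$ and $h=\iota h'$ for some $h'\colon L\to\m$. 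Injectivity of $\iota$ gives $h'g=j_i$, the inclusion $\m_i\into\m$, so composing with the projection $p_i\colon\m\onto\m_i$ yields $p_ih'g=1_{\m_i}$; hence $g$ is a split monomorphism and $\iota_i$ is irreducible. With that supplied (plus the easy observation that $\iota_i$ is itself neither a split epimorphism nor a split monomorphism), your proof is complete and coincides with the paper's.
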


\begin{proof} Write $\m$ as a direct sum of indecomposables, $\m=\bigoplus_i \m_i$. Let $\iota_i$ denote the  inclusion map $\m_i \into R$. To see that $\iota_i$ is irreducible, take a factorization $\iota_i=hg$ in $\CM(R)$, where $h$ is not a split epimorphism. Then $h$ is not onto, so $\im h \subseteq \m$. Then if $h'$ denotes the map into $\m$ given by $x \mapsto h(x)$, and $p_i$ denotes the projection $\m \onto \m_i$, we have that $p_ih'g = 1_{\m_i}$, so $g$ is a split monomorphism; hence $\iota_i$ is irreducible. Now let $M$ be an indecomposable in $\CM(R)$ and let $f: M \to R$ be an irreducible morphism. Let $\iota$ denote the inclusion map $\m \into R$. Since $f$ is not a split epimorphism, $\im f \subseteq \m$,  hence $f= \iota g$ for some $g:M \to \m$. As $\iota$ is certainly not a split epimorphism, $g$ is a split monomorphism. 

For the last sentence of the statement, note that the irreducible maps from $R$ are obtained by dualizing the irreducible maps into $R$. \qed \end{proof}

We recall the Harada-Sai Lemma:
\begin{lem}\label{H-S}  \cite[VI. Cor. 1.3]{AuslanderReitenSmalo}
Let $\Lambda$ be an artin algebra (e.g. a commutative artinian ring). If $f_i \colon M_i \ra M_{i+1}$ are nonisomorphisms between indecomposable modules $M_i$ for $i=1,...,2^n-1$ and $\length(M_i) \le n$ for all $i$, then $f_{2^n-1} \cdots f_1=0$. \end{lem}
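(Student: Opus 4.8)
The plan is to prove this classical fact (the Harada--Sai lemma) by induction on $n$. When $n=1$, every indecomposable of length $1$ is simple, so each $f_i$ is a non-isomorphism between simple modules and hence is $0$; thus a chain of $2^1-1=1$ map already composes to $0$. For the inductive step I would first record the elementary observation that a nonzero non-isomorphism $f\colon X\to Y$ between modules of length $\le n$ has $\length(\im f)\le n-1$: if $\length(\im f)$ were $n$ then, since $\im f\subseteq Y$ and $\length(Y)\le n$, $f$ would be surjective, and since $\length(X)\le n=\length(\im f)$ forces $\ker f=0$, $f$ would also be injective, hence an isomorphism. Consequently the image of any nonzero composite of at least one such map has length $\le n-1$.

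Now, given non-isomorphisms $f_1,\dots,f_{2^n-1}$ between indecomposables of length $\le n$, write $2^n-1=2^{n-1}+(2^{n-1}-1)$ and split the composite as $\psi\circ\varphi$, where $\varphi=f_{2^{n-1}}\cdots f_1$ and $\psi=f_{2^n-1}\cdots f_{2^{n-1}+1}$. If $\varphi=0$ we are done; otherwise $\length(\im\varphi)\le n-1$ by the previous paragraph. Decomposing $\im\varphi$ into indecomposable summands $I_\lambda$, each of length $\le n-1$, it suffices to show that the corestriction $\psi|_{I_\lambda}\colon I_\lambda\to M_{2^n}$ vanishes for every $\lambda$. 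I would analyze each such corestriction by factoring the maps $f_{2^{n-1}+1},\dots,f_{2^n-1}$ through the successive images they produce, passing at each stage to indecomposable summands; this yields, for each branch, a chain of maps between indecomposables of length $\le n-1$, and there are $2^{n-1}-1$ of them. Wherever the maps in such a chain are genuine non-isomorphisms, the inductive hypothesis at level $n-1$ — which kills any composite of $2^{n-1}-1$ non-isomorphisms between indecomposables of length $\le n-1$ — forces $\psi|_{I_\lambda}=0$, completing the induction.

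The step I expect to be the main obstacle is the bookkeeping forced by the fact that images of these composites need not be indecomposable, and that the induced maps between their indecomposable summands need not be non-isomorphisms: when such an induced map happens to be an isomorphism, no step of the inductive hypothesis is gained, so one must argue separately — using indecomposability of the $M_j$ together with the fact that a true non-isomorphism $f_j\colon M_j\to M_{j+1}$ cannot restrict to an isomorphism on all of $M_j$ — that such ``stalled'' stretches of the chain are short and eventually funnel back into the non-isomorphism case. This careful argument is classical and is carried out in full in \cite[VI, Cor. 1.3]{AuslanderReitenSmalo}.
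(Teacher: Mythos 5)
Your proposal does not close the gap it identifies, and that gap is the entire content of the lemma. (The paper itself offers no proof, only the citation, so the relevant comparison is with the classical argument of \cite[VI. Cor. 1.3]{AuslanderReitenSmalo}.) Concretely: after splitting the composite as $\psi\circ\varphi$ and decomposing $\im\varphi$ into indecomposables $I_\lambda$, the maps you obtain between indecomposable summands of the successive images are no longer the $f_j$ and need not be non-isomorphisms, so the inductive hypothesis --- which only kills composites of \emph{non-isomorphisms} --- does not apply to the chains you build. The closing remark that the ``stalled'' stretches are short and eventually funnel back is not an argument; it is precisely the assertion to be proved. Note also that your split $2^n-1=2^{n-1}+(2^{n-1}-1)$ does not isolate a middle map, which is the object the real proof pivots on.

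The standard repair is to strengthen the induction to a quantitative claim: for $1\le j\le n$, the image of a composite of $2^j-1$ non-isomorphisms between indecomposables of length $\le n$ has length $\le n-j$. Your ``elementary observation'' is exactly the case $j=1$, and $j=n$ gives the lemma. For the inductive step write the composite of $2^j-1$ maps as $c\circ b\circ a$, where $a$ is the composite of the first $2^{j-1}-1$ maps, $b=f_{2^{j-1}}$ is the middle map, and $c$ is the composite of the last $2^{j-1}-1$ maps. If $\length(\im(cba))\ge n-j+1$, then since $\im(cba)$ is a quotient of $\im(a)$ and a submodule of $\im(c)$, both of length $\le n-j+1$ by induction, all three lengths coincide and $\im(cba)\ne 0$. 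One then checks that $\im(a)\cap\ker(b)=0$, that the target of $b$ decomposes as $b(\im(a))\oplus(\ker(c)\cap M_{2^{j-1}+1})$, and hence (by indecomposability of $M_{2^{j-1}+1}$, then of $M_{2^{j-1}}$) that $b$ is an isomorphism --- a contradiction. This length bookkeeping is exactly what makes it unnecessary to decompose images into indecomposable summands, which is where your version derails. As written, your proposal is a plausible-looking outline whose decisive step is deferred to the reference rather than proved.
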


\begin{lem} \label{infinite components}  \cite[proposition 1.26]{AKM}  Assume $R$ is Gorenstein of dimension one, and $\m$ is indecomposable; and suppose $R$ has a stable AR component $C$ which is finite. Then $C$ consists of all isoclasses of non-projective indecomposables in $\CM(R)$. \end{lem}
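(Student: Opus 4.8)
The plan is to exploit the connectedness of $C$ as a component of the stable translation quiver, together with the fact that the non-projective indecomposables of $\CM(R)$ are exactly the vertices of the stable AR quiver, plus the one extra vertex corresponding to $R$ itself (and possibly $\m$, if $\m \notin L_p(R)$). First I would observe that since $\dim R = 1$ and $R$ is Gorenstein, every non-projective indecomposable $M \in \CM(R)$ satisfies $M \in L_p(R)$: indeed $R$ is generically a product of fields, so $M_\p$ is $R_\p$-free for every nonmaximal prime $\p$, by Lemma~\ref{L_p=Q-projective} (note $Q$ is Artinian here). Hence the vertices of the stable AR quiver are precisely all nonfree indecomposables of $\CM(R)$, and $\tau$ is everywhere defined on them by Theorem~\ref{Yo theorem}. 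So it suffices to show $C$ is a union of connected components of the stable AR quiver — but that is essentially built into Definition~\ref{defn:component}(2). The real content is therefore to rule out the existence of a \emph{second} stable AR component $C'$ disjoint from $C$.

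The key step is to argue that $\m$ must be a summand of the middle term of the AR sequence ending in $\m^*$ (equivalently, using that $R$ is Gorenstein and the self-duality $(\_)^*$, that $R$ "connects" everything). More precisely: suppose $C'$ is any stable AR component. I would take an indecomposable $N \in C'$ and use the finiteness of $C$ together with the Harada-Sai Lemma~\ref{H-S} and Lemma~\ref{maranda} (the Maranda-type result) to derive a contradiction from $C$ being closed off from the rest of the quiver. The mechanism: since $C$ is finite, let $n$ bound $\length(X/x^i X)$ for all $X \in C$ and a fixed nonzerodivisor $x \in \m$; passing to $R/x^i R$ for large $i$, the modules in $C$ become a finite set of indecomposables over an Artinian ring, and any sufficiently long chain of irreducible (hence non-iso) maps among them composes to zero by Harada-Sai. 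One then shows that $C$ being a finite component forces the AR quiver restricted to $C$ to have no irreducible maps into or out of it from outside — but then, walking via irreducible morphisms, $C$ would have to contain every indecomposable reachable from $\m$ (using Lemma~\ref{maps involving R}: irreducible maps $M \to R$ correspond to summands of $\m$, and $\m$ is assumed indecomposable), which by the indecomposability of $\m$ and connectedness of the full module category forces $C$ to be everything.

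Concretely, the cleanest route: by Lemma~\ref{maps involving R}, since $\m$ is indecomposable, there is an irreducible map $\m \to R$ and an irreducible map $R \to \m^*$; these link $\m$ and $\m^*$ through the vertex $R$ in the (non-stable) AR quiver. Now in the \emph{stable} quiver, the AR sequence ending in $\tau^{-1}\m = \syz^{-1}\m$ has $\m^*$-related middle term, and by \ref{sit:irreducible} the summands of middle terms of AR sequences detect irreducible maps. Using that $C$ is finite, Lemma~\ref{infinite components} should follow by showing that the set of vertices reachable from $\m$ by irreducible morphisms within the stable quiver must exhaust the whole quiver: if not, some AR sequence $0 \to \tau L \to E \to L \to 0$ with $L$ in the reachable set has a summand of $E$ outside it, contradicting how components are defined, so the reachable set is a union of components; finiteness of $C$ and indecomposability of $\m$ then pin it down.

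The main obstacle I anticipate is making precise why a finite stable AR component $C$ cannot be a \emph{proper} union of components — i.e., why connectedness of the ambient module category (via the pivot vertex $R$ and the indecomposable $\m$) forces $C$ to be all of it. This requires combining the Harada-Sai bound (to get that long irreducible-morphism chains vanish, controlling the finite case), the Maranda reduction to an Artin algebra where AR theory is classical, and the duality $(\_)^*$ to see that $\m$ and $\m^*$ lie in the same component; threading these together carefully — rather than any single hard computation — is where the work lies.
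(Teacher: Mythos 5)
Your proposal correctly assembles the ingredients the paper uses (Lemma~\ref{maranda}, the Harada-Sai Lemma~\ref{H-S}, and Lemma~\ref{maps involving R} with $R$ and $\m$ as the pivot), but it has two genuine gaps. First, the opening reduction is wrong: from $\dim R=1$ you cannot conclude that every nonfree indecomposable of $\CM(R)$ lies in $L_p(R)$. By Lemma~\ref{L_p=Q-projective} this would require $M\otimes_R Q$ to be $Q$-projective, and $Q$ being Artinian is not enough --- one needs $Q$ semisimple, i.e.\ $R$ reduced, which is not assumed. So ``the vertices of the stable AR quiver are all nonfree indecomposables'' is not available, and the statement to be proved genuinely includes showing that no nonfree indecomposable outside $L_p(R)$ exists; your framing (``rule out a second stable AR component'') silently drops this part of the conclusion.

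Second, and more seriously, the actual contradiction is never produced: the sentence ``finiteness of $C$ and indecomposability of $\m$ then pin it down'' is exactly where the proof lives. The mechanism is: for any nonfree indecomposable $M$ one can choose $f\colon M\to R$ with $f\otimes_R(R/x)\neq 0$ (pick $0\ne f\in M^*$ and replace $x$ by a suitable power, using the Krull intersection theorem); if $\m\notin C$ then by Lemma~\ref{maps involving R} no module of $C$ admits an irreducible map to $R$, so $C$ is closed under targets of irreducible maps, and repeatedly factoring such an $f$ (for $M\in C$) through the AR sequences starting at modules of $C$ writes $f$ as a sum of compositions of $2^n-1$ irreducible maps among the finitely many modules of $C$; by the choice of $x$ from Lemma~\ref{maranda} (together with Lemma~\ref{no loops}) these remain nonisomorphisms mod $x$, so Lemma~\ref{H-S} forces $f\otimes_R(R/x)=0$, a contradiction. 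Hence $\m\in C$. Then for an arbitrary nonfree indecomposable $M\notin C$ one runs the dual argument: $f$ factors through $\m\in C$, and one stacks irreducible maps backwards through the AR sequences ending at modules of $C$, reaching the same Harada-Sai contradiction. Without this factorization-and-stacking step your outline does not close; ``components have no arrows in or out'' is true by definition but does not by itself prevent two components from coexisting, since $R$ is not a vertex of the stable quiver and can a priori mediate between them.
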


\begin{proof} As $C$ is finite, Lemma~\ref{maranda} implies that we can take $x \in \m$ such that for each pair $M \ncong N$ in $C$, $M/xM$ and $N/xN$ are nonisomorphic indecomposable modules.

We may assume $R$ is not regular, and therefore $\m$ is not free. Now first we show $\m \in C$. Suppose not; then there are no irreducible maps to $R$ from any module in $C$   (Lemma~\ref{maps involving R}). Therefore if $N \in C$ and $ N \ra N'$ is any irreducible map in $\CM(R)$,  $N'$ must lie in $C$ (since $L_p(R)$ is closed under $\syz_R^{-1}$, and therefore under irreducible maps by consideration of AR sequences). Pick a module $M \in C$. By replacing $x$ by a power of itself if necessary, we can choose $f \colon M \ra R$ such that $f(M) \nsubseteq xR$, i.e. $f\otimes_R (R/x)\neq0$. Since $f$ is not a split monomorphism, and there exists an AR sequence beginning in $M$,  $f$ equals a sum of maps of the form $gh$, where $h$ is an irreducible map between modules in $C$. Since $g \in \Hom_R(N,R)$ for some $N \in C$, $g$ is not a split monomorphism, and can in turn be written as a sum of maps of the form $kl$ where $l$ is  an irreducible map  in $C$; now $f=\sum klh$. Continue this process until we have written $f$ as a sum $\sum_i g_i h_{2^{n-1},i} \cdots h_{1,i} $ where each $h_{j,i}$ is an irreducible map in $C$, and $n=\max\{\length(N/xN)|N \in C\}$. Note that each $h_{j,i} \otimes_R (R/x)$ is a  nonisomorphism by our assumption on $x$ together with Lemma~\ref{no loops}. Therefore,  Lemma~\ref{H-S} implies $f\otimes_R (R/x)=0$, contradiction. Thus $\m \in C$.

Now just suppose $C$ omits some indecomposable nonfree $M \in \CM(R)$. Again choose $f \colon M \ra R$ such that $f\otimes_R (R/x)\neq0$. Note that any map to $R$ which is not a split epimorphism factors through $\m$. Whereas in the previous paragraph we reached a contradiction via Lemma~\ref{H-S}, by ``stacking irreducible maps while moving forwards through $C$'',  we now obtain a contradiction by ``stacking irreducible maps while moving backwards through $C \cup \{R\}$''. \qed \end{proof}

\begin{remark} \label{rmk:push} Assume $R$ is Gorenstein and let $C$ be a stable AR component without loops. Then $C$ is a valued stable translation quiver (by  Lemma~\ref{lem:valued translation}) and therefore has a valued tree class $T$ (Definition~\ref{defn:valued tree class}). Then $T$ carries the information of how many nonfree direct  summands  $\push(M)$ and $\push(\push(M))$ (in general, $\push^i(M)$)  have for modules $M \in C$. Let us explain further. Let $x$ be the vertex in $C$ corresponding to $M$, and  let $n=\displaystyle  \sum _{(x \ra y) \in C} d_{yx}$. Then $n$ is the number of nonfree summands in $\push(M)$; that is, $\push(M)=F \oplus \bigoplus_{i=1}^n X_i$ where $F$ is a (possibly zero) free module, and  the $X_i$ are (not necessarily nonisomorphic) nonfree indecomposables in $L_p(R)$. We have a  value-preserving covering  $\phi\colon \ZZ T \ra C$, and after possibly composing $\phi$ with a power of $\tau$, we have $x \in \phi(T)$, say $x=\phi(u)$. Since $\phi\colon \ZZ T \ra C$ is a covering, 
$\displaystyle  \sum _{(x \ra y) \in C} d_{yx} =\sum_{(u \ra w) \in \ZZ T} d_{wu}$, and by definition of $\ZZ T$ this equals $\displaystyle \sum_{w \in T  } d_{wu}$. Thus $n=\displaystyle \sum_{w \in T  } d_{wu}$. Likewise, $\displaystyle \sum_{w,z \in T } d_{zw} d_{wu}$ is the number of nonfree direct summands in $\push(\push(M))$.
\end{remark}

\begin{prop} \label{tube proposition} (cf. \cite[Lemma 1.23 and Theorem 1.27]{AKM}) Assume  $R$ is Gorenstein of dimension one, $\m$ is indecomposable, and $\CM(R)$ has infinitely many isoclasses of indecomposables. Let $C$ be a periodic component of the stable AR quiver of $R$, and suppose that either $R$ is a reduced hypersurface and $C$ has no loops, or that there exists some $M \in C$ such that $\push(\push(M))=X \oplus Y \oplus F$ for some indecomposables $X$ and $Y$, and some possibly-zero free module $F$.
 Then, $C$ is a tube.\end{prop}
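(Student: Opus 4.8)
The plan is to reduce the statement, via Remark~\ref{rmk:tube}, to the assertion that $C$ is a valued stable translation quiver (i.e.\ has no loops) whose valued tree class is $A_\infty$; then Remark~\ref{rmk:tube} applies, $C$ being connected and periodic. The first thing I would record is that $C$ is infinite: if it were finite, Lemma~\ref{infinite components}, applicable because $\m$ is indecomposable, would force $\CM(R)$ to have only finitely many indecomposables, contrary to hypothesis.

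Next comes the absence of loops. Under the first alternative this is assumed outright. Under the second, suppose for contradiction that $C$ has a loop; since $C$ is infinite, Lemma~\ref{no loops} then describes $C$ completely as a homogeneous tube with a loop at the end, with vertices $X_0,X_1,X_2,\dots$ (the loop at $X_0$) and $\tau X_i\cong X_i$ for all $i$. Writing out the AR sequences starting at $X_0,X_1,\dots$ from this picture, one checks that $\push(\push(M))$ has at least three nonfree indecomposable direct summands for every $M\in C$ (for instance $\push(X_j)$ already contains $X_{j-1}\oplus X_{j+1}$ for $j\ge 1$, and $X_0\oplus X_1$ for $j=0$, so two applications of $\push$ produce at least four); this contradicts the hypothesis $\push(\push(M))=X\oplus Y\oplus F$. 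So in either alternative $C$ has no loops, and hence by Lemma~\ref{lem:valued translation} it is a connected, infinite, periodic valued stable translation quiver. By Lemma~\ref{lemma:subadditive-for-periodic} its valued tree class $T$ carries a subadditive function, namely $x\mapsto e_{\avg}(\phi(x))$ where $\phi\colon\ZZ T\to C$ is the canonical covering, and $T$ is infinite (else the periodic quiver $C$ would be finite). So Lemma~\ref{lemma:Dynkin diagrams}(1) shows $T$ is an infinite Dynkin diagram, i.e.\ one of $A_\infty$, $A_{\infty,\infty}$, $B_\infty$, $C_\infty$, $D_\infty$.

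It remains to pin $T$ down to $A_\infty$, and here the two alternatives diverge. Under the second alternative I would invoke Remark~\ref{rmk:push}: after composing $\phi$ with a suitable power of $\tau$ so that the vertex $u$ corresponding to $M$ lies in $T$, the number of nonfree indecomposable summands of $\push(\push(M))$ equals $\sum_{w,z\in T}d_{zw}d_{wu}=\sum_{w}d_{wu}\bigl(\sum_{z}d_{zw}\bigr)$, which is $2$ by hypothesis. A short case analysis of the four remaining infinite Dynkin diagrams shows this is impossible unless $T\cong A_\infty$: the value $2$ forces $u$ to be a leaf, its unique neighbour to have exactly one further neighbour, and the edge at $u$ to carry the trivial valuation $(1,1)$, and inspecting $A_{\infty,\infty}$ (no leaves), $B_\infty$, $C_\infty$ (the leaf edge is the special one) and $D_\infty$ (the leaf's neighbour has two further neighbours) rules all of them out. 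Under the first alternative, $R$ is reduced of dimension one, hence an isolated singularity, so the first Brauer--Thrall theorem for maximal Cohen--Macaulay modules (see e.g.\ \cite{Yoshino:book}) applies: since $\CM(R)$ has infinitely many indecomposables, the multiplicity function, and therefore $e_{\avg}$, is unbounded on the infinite component $C$; since $e_{\avg}$ is constant on $\tau$-orbits and $\phi$ is onto, the subadditive function $x\mapsto e_{\avg}(\phi(x))$ is unbounded on $T$, so Lemma~\ref{lemma:Dynkin diagrams}(2) gives $T\cong A_\infty$. (Alternatively, if $\m^*\in C$ one sees directly that $e_{\avg}$ is not additive, because the AR sequence ending in $\m^*$ has a free middle summand by Lemma~\ref{maps involving R}, and again applies Lemma~\ref{lemma:Dynkin diagrams}(2); the Brauer--Thrall input is what handles the case $\m^*\notin C$.) In both alternatives $C$ is a connected periodic stable translation quiver with tree class $A_\infty$, hence a tube by Remark~\ref{rmk:tube}.

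The step I expect to be the main obstacle is the last one. Under the second alternative it is the combinatorial verification that $\sum_{w,z\in T}d_{zw}d_{wu}=2$ singles out $A_\infty$ among the infinite Dynkin diagrams. Under the first, the delicate point is arranging the general finiteness input so that it really yields unboundedness (or non-additivity) of $e_{\avg}$ on the specific component $C$ rather than merely on $\CM(R)$ as a whole. A secondary obstacle is the explicit AR-sequence bookkeeping used to exclude loops in the second alternative.
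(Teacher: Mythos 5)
Your proposal is correct and follows essentially the same route as the paper: exclude loops via Lemma~\ref{no loops} (counting four nonfree summands of $\push(\push(M))$ in the loop case), get infiniteness from Lemma~\ref{infinite components}, apply the subadditive function $e_{\avg}$ and Lemma~\ref{lemma:Dynkin diagrams}, then pin down $A_\infty$ by unboundedness of multiplicity in the reduced-hypersurface case and by the Dynkin-diagram elimination via Remark~\ref{rmk:push} in the other case. The only point you gloss slightly is that passing from unboundedness of $e$ to unboundedness of $e_{\avg}$ uses that hypersurface modules have period at most two; since you do assume the reduced-hypersurface hypothesis there, this is immediate.
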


\begin{proof}  If $C$ has a loop, then by Lemma~\ref{no loops}, for every $M \in C$, the module $\push M$ has two nonfree indecomposable summands, and therefore $\push(\push(M))$ has four. So we may assume $C$ has no loops. Thus $C$ is a valued stable translation quiver, and we have a valued directed tree $T$ and a value-preserving covering $\phi \colon \ZZ T \ra C$.  Let the function $f \colon \ZZ T \ra \QQ_{>0}$ be given by $f(x)=e_{\avg}(\phi(x))$. As seen in Lemma~\ref{lemma:subadditive-for-periodic}, $f$ restricts to a subadditive function on $T$. 
Since $\phi$ is surjective, every vertex of $C$ lies in the $\tau$-orbit of a vertex in $\phi(T)$. Note also that $C$ has infinitely many vertices, by Lemma~\ref{infinite components}. Therefore $T$ is infinite, so it is an infinite Dynkin diagram by Lemma~\ref{lemma:Dynkin diagrams}. If  $R$ is a reduced then $\{e(M)| M \in C\}$ is unbounded (see \cite[Theorem 6.2]{Yoshino:book}); and so if $R$ is a reduced hypersurface (and thus all modules in $C$ have period 2) then $f$ is unbounded. Then  $T \cong A_\infty$, by Lemma~\ref{lemma:Dynkin diagrams}. If the alternate condition holds, we get $T\cong A_{\infty}$  by eliminating the  other infinite Dynkin diagrams, in light of Remark~\ref{rmk:push}.   Thus $C$ is a tube, by Remark~\ref{rmk:tube}. \qed \end{proof}

\section{An example.} \label{sec:eg}
In this section, we apply the results of the previous sections to determine the shape (namely, a tube) of some  components of the Auslander-Reiten quiver of the ring  $\hat{R}$ defined in~\ref{R setup}, below. Recall that a hypersurface, i.e. a regular (graded-) local ring modulo a nonzerodivisor, is always Gorenstein. 

\sit \label{sit:S and R} Let $S$ be a regular (graded-) local ring, and $f \in S$ a nonzero element. Let $R=S/fS$. A \emph{matrix factorization} of $f$ is a pair of matrices $(\varphi, \psi)$, with entries in $S$, such that $\varphi \psi=\psi \varphi=f \id_{l \times l} $ for some $l >0$.  As consequences of the definition, we have $ \cok \varphi \cong \cok (\varphi \otimes_S R)$, and (\cite[7.2.2]{Yoshino:book})
\begin{equation} \label{hypersurface resolution} \im (\varphi \otimes_S R )=\ker (\psi \otimes _S R) \,\, \text{ and }  \im (\psi \otimes_S R )=\ker (\varphi \otimes _S R). \end{equation} 
In particular, $\cok \varphi$ and $\cok \psi$ are periodic $R$-modules, of period two. 

\begin{remark} \label{MF morphism} Let $(\varphi, \psi)$ and $(\varphi',\psi')$ be matrix factorizations of $f$. Let $n_1$ and $n_2$ be the integers such that $\varphi$ is $n_1$-by-$n_1$ and $\varphi'$ is $n_2$-by-$n_2$. Given $h \colon \cok \varphi \to \cok \varphi'$, there of course exist $\alpha \colon S^{(n_1)} \to S^{(n_2)}$ and $\beta \colon S^{(n_1)} \to S^{(n_2)}$ making the diagram 
\begin{equation} \label{dgm:alpha beta} \xymatrix{S^{(n_1)} \ar[r]^{\varphi} \ar[d]_{\beta} & S^{(n_1)} \ar[r] \ar[d]^{\alpha} & \cok \varphi \ar[r] \ar[d]^h &0\\
S^{(n_2)} \ar[r]^{\varphi'}& S^{(n_2)} \ar[r] & \cok \varphi' \ar[r] &0} \end{equation}
commute. Now it is easy to see that $\Big( \begin{pmatrix} \varphi' & -\alpha\\ 0& \psi \end{pmatrix}, \begin{pmatrix} \psi' & \beta\\0& \varphi \end{pmatrix} \Big)$ is a matrix factorization of $f$.

 If $(\varphi,\psi)$ is a matrix factorization such that $\varphi$ and $\psi$ each contains no unit entry, then it is called a \emph{reduced} matrix factorization. 
If $(\varphi,\psi)$ is a reduced matrix factorization, then neither $\im \varphi$ nor $\im \psi$ contains a free summand (cf. \cite[7.5.1]{Yoshino:book}). \end{remark}


\sit \label{gamma pushout} Let $(\varphi, \psi)$ be a reduced matrix factorization, let $M=\cok \varphi$, and assume that $R$, $M$ and $\gamma$ satisfy the hypotheses of Theorem~\ref{main theorem}.  Pick $\alpha$ and $\beta$ lifting $\gamma_M \in \End_R M$ in the sense of Remark~\ref{MF morphism}. One may check that the valid choices for $\alpha$ are precisely those choices such that  $\psi \alpha= \gamma \psi$ after passing to $R$. By Remark~\ref{AR seq as pushout},  $\push(M) \cong (\im (\psi\otimes_S R) \oplus R^{(n)})/\{(-\gamma c, c)| c\in \im (\psi\otimes_S R)\}$, where $n$ denotes the side length of the matrices $\varphi$ and $\psi$. Then we see that $\push(M)  = \cok \begin{pmatrix}\varphi & -\alpha\\0 & \psi\\ \end{pmatrix}$ .

\sit \label{R setup}  Let $k$ be a field, of characteristic not equal to 2, and let us set up a connected graded hypersurface $R$ as follows. Let $p$ and $q$ be \emph{relatively prime} integers $\ge 3$, and let $S=k[x,y]$ be the graded polynomial ring such that $S_0=k$, $\deg x =q$, and $\deg y=p$. Let $f \in S$ be a  homogeneous polynomial which is not divisible by $x$. Let $g=(bx^p+y^q)f$, where $b \in k$, and $b$ is allowed to be zero. Now, let $R=k[x,y]/(g)$. The $\m$-adic completion of $R$  is  $\hat{R}=k[|x,y|]/(g)$. Let $v=\deg(f)/p$, which is an integer because $x \nmid f$. We assume  that $f  -y^v\in xS$. Lastly, assume that there are infinitely many isoclasses of indecomposables in $\CM(R)$. 

 Now fix an ideal of $R$ of the form $I =(x^m, y^n)$, where $1\le m<p-1$ and $2\le n<q$.  
 We will show that the component of $\Gamma_{\Rhat}$ containing $\hat{I}$ is a tube, by showing that $\push(\push(\hat{I}))$ has only two indecomposable summands, and applying Proposition~\ref{tube proposition}. However, we will work over $R$:
 
 \begin{remark} \label{Gamma: R and Rhat} Let $C$ be a component of $\Gamma_R$. Now consider the valued translation quiver $C'$ obtained from $C$ by identifying vertices $x$ and $y$ when they correspond to modules which are merely graded-shifts of one another, where a ``graded-shift'' of a module $M$ means a module $M(i)$ defined by $M(i)_j=M_{i+j}$. By \cite[Theorem 3]{Auslander-Reiten:gradedCM}, $C'$ is naturally identified with a component of $\Gamma_{\Rhat}$. Therefore we might as well work over $R$, and just try not to keep track of the grading on $M$ and the grading on $\push(M)$ simultaneously. \end{remark}

\notation \label{notation:gamma and R'} 
Let $\gamma=y^{q-1}f/x \in Q(R)$. If $b \neq 0$, set $R'=S/(bx^p+y^q)S$; if $b=0$,  set $R'=S/yS\cong k[x]$.  In either case, $R'$ is a domain:

\begin{lem} \label{R' is a domain} If $b \neq 0$, then $S/(bx^p+y^q)S$ is a domain. \end{lem}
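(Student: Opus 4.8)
The plan is to prove the stronger assertion that $bx^p+y^q$ is an irreducible element of $S=k[x,y]$; since $S$ is a UFD, an irreducible element generates a prime ideal, so this immediately gives that $S/(bx^p+y^q)S$ is a domain. To get at irreducibility I would regard $S=k[y][x]$ as a polynomial ring in $x$ over the PID $k[y]$. As a polynomial in $x$ the only nonzero coefficients of $bx^p+y^q$ are $b$ and $y^q$, whose greatest common divisor is a unit because $b\in k^\times$ (this is exactly where the hypothesis $b\neq 0$ is used), so $bx^p+y^q$ is primitive in $k[y][x]$. By Gauss's lemma it then suffices to show that $bx^p+y^q$ is irreducible in $k(y)[x]$, and over the field $k(y)$ we may write $bx^p+y^q=b\,(x^p-a)$ with $a=-y^q/b\in k(y)^\times$, so the problem reduces to irreducibility of the binomial $x^p-a$ over $k(y)$.

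For the binomial I would invoke the classical Vahlen--Capelli criterion (see, e.g., Lang, \emph{Algebra}, Ch.~VI): for a field $K$ and $a\in K^\times$, the polynomial $x^p-a$ is irreducible over $K$ if and only if $a$ is not an $\ell$-th power in $K$ for any prime $\ell\mid p$, and, in addition, $a\notin -4K^4$ whenever $4\mid p$. I would check these conditions for $K=k(y)$ using the $y$-adic valuation $v=v_y$: since $b\in k^\times$, we have $v(a)=q$. If $a=h^\ell$ with $\ell\mid p$ prime, then $\ell\mid v(a)=q$, which is impossible since $\ell>1$ and $\gcd(p,q)=1$. If $4\mid p$, then $p$ is even, hence $q$ is odd by $\gcd(p,q)=1$, so $4\nmid v(a)=q$; since $-4\in k^\times$ (the characteristic of $k$ is not $2$), any element of $-4K^4$ has $v$-value divisible by $4$, so $a\notin -4K^4$. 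Therefore $x^p-a$, and hence $bx^p+y^q$, is irreducible over $k(y)$, and by the previous paragraph it is irreducible in $k[x,y]$, completing the argument.

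I do not anticipate a genuine obstacle: the argument is short, and the only slightly delicate point is the $4\mid p$ clause of the binomial criterion (which the coprimality of $p$ and $q$ handles cleanly). If one wishes to avoid citing Vahlen--Capelli, an equally valid route is to base change to $\bar k$, pick $c\in\bar k$ with $c^q=-b$, and consider the graded $k$-algebra homomorphism $\bar k[x,y]\to\bar k[t]$ (with $\deg x=q$, $\deg y=p$, $\deg t=1$) sending $x\mapsto t^q$ and $y\mapsto ct^p$; its image is the monomial-curve ring $\bar k[t^p,t^q]$, and comparing Hilbert series via the identity $\sum_{n\in\langle p,q\rangle}s^n=\frac{1-s^{pq}}{(1-s^p)(1-s^q)}$ shows that the kernel, being a height-one prime of the UFD $\bar k[x,y]$, is principal with generator in degree $pq$, which forces $bx^p+y^q$ to generate it and hence to be irreducible in $\bar k[x,y]$, and therefore in $k[x,y]$.
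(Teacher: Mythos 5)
Your proof is correct, but it follows a genuinely different route from the paper's. The paper also reduces to irreducibility of $bx^p+y^q$ in the UFD $S$, but then exploits the weighted grading ($\deg x=q$, $\deg y=p$) directly: in a factorization $ss'=bx^p+y^q$ into nonunits, both factors must be homogeneous; since $b\neq 0$, setting $y=0$ (resp.\ $x=0$) shows $s$ has a nonzero pure power of $x$ (resp.\ of $y$), so $q\mid\deg s$ and $p\mid\deg s$, whence $pq\mid\deg s$ and $\deg s'=0$, forcing $s'$ to be a unit. That argument is entirely self-contained, four lines long, and uses only $\gcd(p,q)=1$ together with homogeneity. Your primary route trades this for Gauss's lemma plus the Vahlen--Capelli binomial criterion; the valuation-theoretic verification of its hypotheses (including the $4\mid p$ clause, correctly dispatched via $q$ odd and $\operatorname{char}k\neq 2$) is accurate, but it outsources the real content to a classical theorem that is considerably heavier than what is needed here, and it does not use the grading at all -- which is a mild advantage in generality (it would apply to non-homogeneous perturbations of the binomial) but a disadvantage in economy. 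Your alternative Hilbert-series argument via the semigroup ring $\bar k[t^p,t^q]$ is also valid and is closer in spirit to the paper's, since it again hinges on the weighted degree $pq$; it additionally identifies $R'$ concretely as a monomial curve, which the paper obtains separately (Remark~\ref{just a semigroup ring}) rather than using here.
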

\begin{proof} As $S$ is factorial, it suffices to show $bx^p+y^q$ is irreducible.  Since a product $ss'$ fails to be homogeneous when either $s$ or $s'$ does, $bx^p+y^q$ is either irreducible or equal to a product of {homogeneous} nonunits. Let $s$ and $s'$ be homogeneous elements satisfying $ss'= bx^p+y^q$, and assume $s$ is a nonunit. Then $s$ has a term of the form $\alpha x^i$  for $\alpha \in k \setminus \{0\}$, so that $q| \deg s$. Likewise $p| \deg s$, and thus $\deg s=\deg (bx^p+y^q)$, hence $\deg s'=0$, so $s'$ is a unit.\qed \end{proof}

We will use the following piece of aritheoremetic several times. We omit the easy proof.
\begin{lem} \label{lem:numerical semigroup} 
If $b_1<q$ and $b_2<0$, or if  $b_1<0$ and $b_2<p$, then $  b_1 p+ b_2 q \notin p\NN +q \NN$.
\end{lem}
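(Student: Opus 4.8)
The plan is to argue by contradiction, exploiting only that $\gcd(p,q)=1$. Suppose $b_1 p + b_2 q \in p\NN + q\NN$, say $b_1 p + b_2 q = ap + cq$ with $a,c \in \NN$. Rearranging gives the key identity $(b_1-a)p = (c-b_2)q$. Since $p$ and $q$ are coprime, $q$ divides $b_1-a$, and symmetrically $p$ divides $c-b_2$.

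In the first case ($b_1 < q$, $b_2 < 0$), I would observe that $c - b_2 \ge -b_2 > 0$, so the common value $(b_1-a)p=(c-b_2)q$ is strictly positive; since $p>0$ this forces $b_1 - a > 0$. But a strictly positive multiple of $q$ is at least $q$, so $b_1 - a \ge q$, while $a \ge 0$ gives $b_1 - a \le b_1 < q$ --- a contradiction. The second case ($b_1 < 0$, $b_2 < p$) is handled by the symmetric argument (exchange the roles of $(p,b_1)$ and $(q,b_2)$): from $(b_2-c)q=(a-b_1)p$ and $a - b_1 > 0$ one gets $b_2 - c \ge p$, contradicting $b_2 - c \le b_2 < p$.

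There is no real obstacle here; the only point requiring care is to use each strict inequality in exactly the right place, namely to conclude that the relevant positive multiple of $q$ (resp.\ of $p$) is at least $q$ (resp.\ $p$), which is then contradicted by the bound coming from $a \ge 0$ (resp.\ $c \ge 0$). This is the elementary fact underlying the description of the numerical semigroup $p\NN + q\NN$, and accounts for the author's remark that the proof is easy and may be omitted.
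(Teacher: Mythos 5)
Your proof is correct and complete: the identity $(b_1-a)p=(c-b_2)q$ together with coprimality and the sign analysis in each case is exactly the elementary argument the paper has in mind when it says the easy proof is omitted. Each strict inequality is used in the right place, so there is nothing to add.
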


\begin{lem} \label{if rank neq 2} Let $(\varphi,\psi)$ be a reduced matrix factorization of $g$ and such that each indecomposable direct summand of $\cok \varphi$ has rank, and  char $k$ does not divide any of these ranks. Let $\alpha$ be a matrix such that  $\psi \alpha= \gamma \psi$ after passing to $R$. Then,
 $\push(\cok \varphi) = \cok \begin{pmatrix}\varphi & -\alpha\\0 & \psi\\ \end{pmatrix}$.\end{lem}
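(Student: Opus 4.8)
The plan is to apply Theorem~\ref{main theorem} together with the pushout description of the AR sequence recorded in~\ref{gamma pushout}, but first one has to check that the hypotheses of Theorem~\ref{main theorem} are met for \emph{each} indecomposable summand of $\cok\varphi$. Write $\cok\varphi=\bigoplus_j M_j$ as a direct sum of indecomposables; since $(\varphi,\psi)$ is reduced, no $M_j$ is free (cf.~\cite[7.5.1]{Yoshino:book}), and each $M_j\in L_p(R)$ because $R$ has isolated singularity away from $\m$ in codimension $0$ (concretely, $M_j\otimes_R Q(R)$ is $Q(R)$-projective as $Q(R)$ is a product of fields, so Lemma~\ref{L_p=Q-projective} applies). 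So each $M_j$ is a nonfree indecomposable in $L_p(R)$, and by hypothesis each has a rank $n_j$ which is a unit in $R$ (char $k$ does not divide it). Here $R'$ is taken as in Notation~\ref{notation:gamma and R'}, with $\p=\Ann_R(x)$ in $R$ (note $x$ is a nonzerodivisor on $R'$ and $bx^p+y^q$, resp.\ $y$, cuts out $\p$); one checks that $\gamma=y^{q-1}f/x$ is exactly $z\gpt$ for a valid choice of $\gamma',\gpt,z$ in the recipe of Notation~\ref{gamma_M} — this is essentially the computation underlying Lemma~\ref{a gamma crit} applied to $R'$, with $z=x$ (or the appropriate generator of $\Ann_R(\p)$) and $\gamma'$ the image of $y^{q-1}f/x$ in $Q(R')$.

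Granting that, Theorem~\ref{main theorem} applies to each $M_j$: $\gamma_{M_j}$ induces the AR sequence beginning at $M_j$, since the relevant dimension $\dim_{R'_\p}(M_j\otimes_R R'_\p)=n_j$ is a unit in $R$. Hence $\push(M_j)$ is, by Remark~\ref{AR seq as pushout} and the matrix-factorization translation in~\ref{gamma pushout}, the cokernel of a block matrix $\begin{pmatrix}\varphi_j & -\alpha_j\\ 0 & \psi_j\end{pmatrix}$, where $(\varphi_j,\psi_j)$ is the reduced matrix factorization with $\cok\varphi_j\cong M_j$ and $\alpha_j$ is any lift of $\gamma_{M_j}$ in the sense of Remark~\ref{MF morphism}, equivalently any matrix with $\psi_j\alpha_j=\gamma\psi_j$ over $R$. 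Since $\push$ is additive on direct sums (Notation~\ref{push}), $\push(\cok\varphi)=\bigoplus_j\push(M_j)$ is the cokernel of the block-diagonal assembly of the $\begin{pmatrix}\varphi_j&-\alpha_j\\0&\psi_j\end{pmatrix}$. It then remains to observe that this block-diagonal matrix has the same cokernel as $\begin{pmatrix}\varphi&-\alpha\\0&\psi\end{pmatrix}$ for \emph{any} single choice of $\alpha$ with $\psi\alpha=\gamma\psi$ over $R$: indeed $\varphi=\bigoplus_j\varphi_j$ and $\psi=\bigoplus_j\psi_j$ after a change of basis realizing the decomposition $\cok\varphi=\bigoplus M_j$, and any two matrices $\alpha,\alpha'$ satisfying $\psi\alpha\equiv\psi\alpha'\equiv\gamma\psi\pmod{fS}$ differ by something that, via the standard equivalence of matrix factorizations under row/column operations (Remark~\ref{MF morphism}), does not change the isomorphism class of the cokernel of the upper-triangular block matrix.

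The main obstacle is the bookkeeping in this last step: one must check that replacing the ``correct per-summand'' lifts $\alpha_j$ by an arbitrary global $\alpha$ satisfying only $\psi\alpha=\gamma\psi$ over $R$ does not alter $\cok\begin{pmatrix}\varphi&-\alpha\\0&\psi\end{pmatrix}$ up to isomorphism. This follows because the map $h$ induced on $\cok\varphi$ by the pair $(\alpha,\beta)$ in diagram~\eqref{dgm:alpha beta} is determined (once $\psi\alpha=\gamma\psi$ over $R$ forces $h=\gamma_{\cok\varphi}$ in $\End_R(\cok\varphi)$) and the cokernel of the block matrix computes the pushout of $0\to\im(\psi\otimes_S R)\to R^{(n)}\to\cok\varphi\to 0$ along that map, which by Remark~\ref{AR seq as pushout} depends only on $h$, not on the chosen lift; and any $h$ restricting to $\gamma_{M_j}$ on each summand gives the AR sequence, since $\soc\stEnd_R M_j$ is one-dimensional and $[\gamma_{M_j}]\neq 0$. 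I would spell this out by noting $\psi\alpha=\gamma\psi$ over $R$ already pins down the class of $\alpha$ modulo the indeterminacy that corresponds to stably-zero perturbations of $h$, which are exactly the perturbations that leave the pushout unchanged. Everything else is a direct citation of Theorem~\ref{main theorem},~\ref{gamma pushout}, and Remark~\ref{MF morphism}.
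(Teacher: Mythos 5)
Your overall strategy is the same as the paper's: reduce, via~\ref{gamma pushout} and the additivity of $\push$, to checking that $\gamma=y^{q-1}f/x$ agrees with Notation~\ref{gamma_M} and that each indecomposable summand of $\cok\varphi$ satisfies the hypotheses of Theorem~\ref{main theorem}. The long discussion of why the particular choice of $\alpha$ does not matter is also fine, but it is re-deriving something already asserted and used in~\ref{gamma pushout} (any $\alpha$ with $\psi\alpha=\gamma\psi$ over $R$ is a valid lift of $\gamma_M$, and the cokernel of the block matrix computes the pushout along $\gamma_M$); that is not where the content of this lemma lies.

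The genuine gap is in the one place where actual verification is required, namely that $\gamma$ really is of the form $z\gpt$ prescribed by Notation~\ref{gamma_M}. Your concrete proposal there is wrong: you take $z=x$ with ``$\p=\Ann_R(x)$,'' but $x$ is a \emph{nonzerodivisor} in $R$ (since $x\nmid f$ and $x\nmid bx^p+y^q$, so $x\nmid g$), hence $\Ann_R(x)=0\neq\p$. The recipe requires $z$ to generate $\Ann_R(\p)$: for $b\neq 0$ one must take $z=f$ and $\gamma'=y^{q-1}/x$ (so that $z\gpt=y^{q-1}f/x$), and for $b=0$ one takes $z=y^{q-1}f$ and $\gamma'=1/x$; your ``$\gamma'=$ image of $y^{q-1}f/x$ in $Q(R')$'' does not match either factorization. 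Moreover, the claim that $\gamma'\in\F(R')\setminus R'$ is exactly what must be proved, and you only gesture at it (``one checks\dots this is essentially the computation underlying Lemma~\ref{a gamma crit}''). The paper does this by computing $\deg(y^{q-1}/x)=p(q-1)-q\notin p\NN+q\NN$ (Lemma~\ref{lem:numerical semigroup}), so that $R'_{\deg\gamma'}=0$ and Lemma~\ref{a gamma crit} applies. Finally, you assert $\dim_{R'_\p}(M_j\otimes_R R'_\p)=n_j$ without justification; this needs the observation (via Lemma~\ref{Q and Q'}) that $M_j\otimes_R Q'$ is free over $Q'$ of the same rank as $M_j\otimes_R Q$. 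These omissions and the incorrect choice of $z$ constitute the missing core of the proof.
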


\begin{proof} By~\ref{gamma pushout}  we only need to check that $\gamma$  agrees with Notation~\ref{gamma_M}, and  the indecomposable summands of $\cok \varphi$ satisfy the hypotheses in Theorem~\ref{main theorem}.    If $b \neq 0$ then $R'=S/(bx^p+y^q)S$ and we take $\gamma'=y^{q-1}/x\in \Hom_{R'}(\m_{R'},R')$, and set $z=f$. Let $Q=Q(R)$ and $Q'=Q(R')$. As $\deg(y^{q-1}/x) =p(q-1)-q \notin p\NN +q \NN$ by Lemma~\ref{lem:numerical semigroup},  we have $\gamma' \in (R':_{Q'} \J(\Rpb)) \setminus R'$ by Lemma~\ref{a gamma crit}. So $\gamma=y^{q-1}f/x$ agrees with Notation~\ref{gamma_M}. If $b=0$, then $R'=S/yS$ and we take $\gamma' =1/x \in (R':_{Q'} \J(\Rpb)) \setminus R'$ and set $z=y^{q-1}f$. Again $\gamma=y^{q-1}f/x$ agrees with Notation~\ref{gamma_M}.
It only remains to note that $M \otimes _R Q'$ is a free $Q'$-module of rank equal to that of $M \otimes_R Q$, by Lemma~\ref{Q and Q'}. 
 \qed \end{proof}

 In preparation for what immediately follows, let us observe that $g-y^{q+v} \in x^m S$. Indeed, we have by assumption $f-y^v \in xS$, and $\deg f=\deg (y^v)=pv$. So if $x^iy^j$ is a monomial occurring in $f-y^v$, then we have $i > 0$, and $qi+pj=pv$. Since $\gcd(p,q)=1$, $i$ is therefore a positive multiple of $p$; in particular, $i > m$. Thus, if $\equiv$ denotes congruence modulo $x^m$, we have $f-y^v \equiv 0$, and $g-y^{q+v}=(bx^p+y^q)f-y^{q+v}\equiv y^q(f-y^v) \equiv 0$.

  Let 
 \begin{equation} \varphi=\begin{pmatrix} 
(g-y^{q+v})/x^m & -y^n\\
y^{q+v-n} & x^m\\ \end{pmatrix}, \text{ and }
\psi=\begin{pmatrix} 
x^m & y^n\\
-y^{q+v-n} & (g-y^{q+v})/x^m\\
\end{pmatrix}; \end{equation}
then $I \cong \cok \varphi$, and $(\varphi,\psi)$ is a matrix factorization of $g$. 
Let \begin{equation} \alpha=\begin{pmatrix}
0&-bx^{p-m-1}y^{n-1}f\\
x^{m-1}y^{q-n-1}f&0\\
\end{pmatrix},\end{equation} and note that $\psi \alpha=\gamma \psi$ after passing to $R$. Therefore if we let $\xi=\begin{pmatrix} \varphi& -\alpha\\ 0 & \psi\\ \end{pmatrix}$,  it follows  from  Lemma~\ref{if rank neq 2} that 
$ \cok \xi =\push(I)$. 

By Remark~\ref{MF morphism}, we can pick a matrix $\beta$, with entries in $S$, such that 
\begin{equation}\label {a phi=phi b} \alpha \varphi=\varphi \beta .\end{equation} In fact \begin{center}$\beta=\begin{pmatrix}
y^{q-1}(f-y^v)/x & -x^{m-1}y^{n-1}\\
y^{q-n-1}(bx^{p-m-1}y^vf+(f-y^v)(g-y^{q+v})/x^{m+1}) & -y^{q-1}(f-y^v)/x\\
\end{pmatrix}$. \end{center} We will never need to refer to the actual entries of $\beta$, though we will use that $\beta$ has no unit entries. By equation~\eqref{a phi=phi b},  the pair 
\begin{equation} \label{xi eta} (\xi, \eta) \text{ forms a matrix factorization of }g\text{, where }\xi=\begin{pmatrix} \varphi& -\alpha\\ 0 & \psi\\ \end{pmatrix}\text{, and }
\eta=\begin{pmatrix}
\psi & \beta\\
0 & \varphi\\
\end{pmatrix}.\end{equation} Furthermore, $(\xi,\eta)$ is a reduced matrix factorization.\\

To avoid extreme clutter, we will henceforth abuse notation!
\begin{caveat}
Regarding all matrices in this section, we from now on always take  the entries as living in $R$ rather than $S$, unless stated otherwise. \end{caveat}

The reader can check directly that $\alpha \varphi=-\gamma \varphi$.  In other words,
\begin{equation} \label{phi b=-g phi} \varphi \beta=-\gamma \varphi. \end{equation}

\defn \label{defn:W} We choose a matrix $W$ such that $\eta W= \gamma \eta$. Such $W$ exists by~\ref{gamma pushout}. Let $Z$ and $Z'$ be 2-by-2 matrices such that  $W=  \begin{pmatrix} \alpha &  Z'\\
0 & -\beta + \psi Z\\ \end{pmatrix}$. 

We explain why $W$ can be chosen to be of this form. To begin with, let $W$ be an  arbitrary matrix such that $\eta W= \gamma \eta$, and let $A$, $B$, $C$ and $D$ be 2-by-2 matrices such that $W=\begin{pmatrix} A & B\\
C& D\\ \end{pmatrix}$. The equation $ \begin{pmatrix}
\psi & \beta\\
0 & \varphi\\
\end{pmatrix} \begin{pmatrix} A & B\\
C& D\\ \end{pmatrix} = \begin{pmatrix}
\gamma \psi & \gamma \beta\\
0 & \gamma \varphi\\
\end{pmatrix}$ implies $\varphi D=\gamma \varphi$, which equals $-\varphi \beta$ (equation~\eqref{phi b=-g phi}). Therefore $\varphi (D+ \beta)=0$, and this implies $D+ \beta =\psi Z$ for some matrix $Z$. That we may choose $\begin{pmatrix} A \\ C\\ \end{pmatrix}$ to be $\begin{pmatrix} \alpha \\ 0\\ \end{pmatrix}$ follows from the equation $\psi \alpha =\gamma \psi$.

Now, let $\theta$ denote the 8-by-8 matrix  $\theta = \begin{pmatrix} \xi & -W\\
0 & \eta\\
\end{pmatrix}$. As $\rank (\cok \eta)=\rank (\cok \xi)=\rank(\push(I))=2$, Lemma~\ref{if rank neq 2} gives  $\cok \theta=\push(\cok \xi)=\push(\push(I))$. By Proposition~\ref{tube proposition}, in order to show the component of $\Gamma_{\Rhat}$ containing $I$ is a tube, it suffices to show that $\cok \theta=X \oplus Y \oplus F$, for some indecomposables $X$ and $Y$ and some possibly-zero free module $F$. It suffices to do this for $\im \theta$ instead of $\cok \theta$. We clarify that the term  the term ``indecomposable'' is unambiguous:

\begin{lem} \label{AR gdd lemma 1} \cite[Lemma 1]{Auslander-Reiten:gradedCM} Given an indecomposable $N$ in $L_p(R)$ (i.e., $N$ has no proper graded direct summand), we have that $\hat{N}$ is indecomposable in $L_p(\Rhat)$. In particular, $N$ is indecomposable as an $R$-module. \end{lem}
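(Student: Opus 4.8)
The plan is to prove the \emph{a priori} stronger statement that $\hat N$ is indecomposable over $\Rhat$, and to read both assertions of the lemma off of it. That $\hat N$ lies in $L_p(\Rhat)$ is routine: $\hat N$ is maximal Cohen--Macaulay over $\Rhat$, and $\hat N\otimes_{\Rhat}Q(\Rhat)$ is the base change along the flat map $Q(R)\to Q(\Rhat)$ of the projective $Q(R)$-module $N\otimes_R Q(R)$ (projective by Lemma~\ref{L_p=Q-projective}), hence is $Q(\Rhat)$-projective, so Lemma~\ref{L_p=Q-projective} applies in the other direction. The ``in particular'' clause is then immediate: a nontrivial ungraded decomposition $N=A\oplus B$ would give $\hat N=\hat A\oplus\hat B$ with both summands nonzero by faithful flatness of $R\to\Rhat$, contradicting indecomposability of $\hat N$.

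For the core statement set $B=\End_R(N)$, a module-finite $R$-algebra; we may assume $N$ is nonfree, since otherwise $N\cong R$ and $\hat N\cong\Rhat$ is visibly indecomposable. Because $\Rhat$ is $R$-flat and $N$ is finitely presented, $\End_{\Rhat}(\hat N)=B\otimes_R\Rhat=\hat B$, the $\m_R$-adic completion of $B$; being module-finite over the complete local Noetherian ring $\Rhat$, idempotents of $\hat B$ lift modulo $\J(\hat B)$ and $\hat B/\J(\hat B)$ is semisimple Artinian, so $\hat N$ is indecomposable precisely when $\hat B/\J(\hat B)$ is a division ring. Since $\m_{\Rhat}\hat B\subseteq\J(\hat B)$ and $\hat B/\m_{\Rhat}\hat B\cong B/\m_R B$ as rings, this reduces to showing that the finite-dimensional $k$-algebra $(B/\m_R B)/\rad(B/\m_R B)$ is a division ring, where $k=R_0$.

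This is where graded-indecomposability of $N$ enters. By \cite[proposition 8]{Auslander-Reiten:gradedCM}, $B$ is graded-local, i.e.\ $B/\J(B)$ is a graded division ring; and $\m_R B\subseteq\J(B)$, because for each maximal graded left ideal $\mathfrak L$ the set $\m_R(B/\mathfrak L)$ is a graded submodule of the graded-simple module $B/\mathfrak L$ which, by graded Nakayama applied to the nonzero finitely generated graded module $B/\mathfrak L$, is not all of it, hence is zero. Thus $B/\J(B)$ is a quotient of $B/\m_R B$, so it is finite-dimensional over $k$; and a finite-dimensional graded division ring must be concentrated in degree $0$ (the powers of a homogeneous unit of nonzero degree would lie in distinct graded pieces, hence be linearly independent), so $B/\J(B)$ is an ordinary division ring, in particular semisimple. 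Since $\J(B)/\m_R B$ is a nilpotent ideal of $B/\m_R B$ (it is the graded radical of a finite-dimensional graded algebra) with semisimple quotient $B/\J(B)$, it coincides with $\rad(B/\m_R B)$; therefore $(B/\m_R B)/\rad(B/\m_R B)=B/\J(B)$ is a division ring, as required. I expect the only genuinely delicate point to be this last paragraph --- namely the two facts that a finite-dimensional graded division ring sits in degree $0$ and that the graded and ordinary radicals of $B/\m_R B$ agree; the rest is formal manipulation of completion and of endomorphism rings.
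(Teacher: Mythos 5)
Your proof is correct. Note that the paper does not actually prove this lemma --- it simply cites \cite[Lemma~1]{Auslander-Reiten:gradedCM} --- so there is no in-paper argument to compare against; what you have written is a sound, self-contained reconstruction of the standard proof. The skeleton is right: $\End_{\Rhat}(\hat N)\cong\End_R(N)\otimes_R\Rhat$ by flatness and finite presentation, a module-finite algebra over a complete local ring is semiperfect, so indecomposability of $\hat N$ reduces to $(B/\m_R B)/\rad(B/\m_R B)$ being a division ring, and this is exactly what graded-locality of $B=\End_R(N)$ (the paper's Remark~\ref{AR seq as pushout}, citing \cite[proposition~8]{Auslander-Reiten:gradedCM}) delivers once you know (i) a finite-dimensional graded division ring is concentrated in degree~$0$ and (ii) the graded radical of the finite-dimensional algebra $B/\m_R B$ is nilpotent, hence equals the ordinary radical since the quotient is semisimple. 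Both of these delicate points are argued correctly (for (ii): $\J(B)/\m_R B$ annihilates every factor of a graded composition series of $B/\m_R B$, so it is nilpotent, and a nilpotent two-sided ideal with semisimple quotient is the radical). Two cosmetic remarks: in the first paragraph you do not need flatness of $Q(R)\to Q(\Rhat)$, only that base change of a projective module is projective; and the appeal to Lemma~\ref{L_p=Q-projective} implicitly uses that $\dim R=1$, which holds in the setting of~\ref{R setup} where the lemma is invoked (the original \cite[Lemma~1]{Auslander-Reiten:gradedCM} is stated more generally, but your argument suffices for the paper's use of it).
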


We state the above discussion as a lemma.

\begin{lem} \label{lem:to establish} In order to establish that the component of the AR quiver containing $\hat{I}$ is a tube, it suffices to show that $\im \theta=X \oplus Y$ for some graded modules $X$ and $Y$ each having no proper graded direct summand. \end{lem}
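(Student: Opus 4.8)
The plan is to string together the reductions already in place. Granting the claimed splitting $\im\theta = X\oplus Y$ into graded modules with no proper graded direct summand, the first step is to transfer this splitting to $\cok\theta$. Since $\theta$ is one matrix of a matrix factorization of $g$ (Remark~\ref{MF morphism}), the presentation $0\to\im\theta\to R^{(8)}\to\cok\theta\to 0$ exhibits $\im\theta$ as a maximal Cohen--Macaulay module which, up to a free summand, is the first syzygy $\syz_R(\cok\theta)$; equivalently $\cok\theta$ is, up to a free summand, $\syz_R^{-1}(\im\theta)$. Since over the hypersurface $R$ the operations $\syz_R^{\pm 1}$ are additive and carry nonfree indecomposables in $L_p(R)$ to nonfree indecomposables in $L_p(R)$ (cf.~\ref{sit:Gor 1}), and $\CM(R)$ has the Krull--Schmidt property, the two-summand splitting of $\im\theta$ forces $\cok\theta \cong X'\oplus Y'\oplus F$ with $X'$, $Y'$ indecomposable and $F$ a (possibly zero) free module.

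Next I would feed this into the chain $\cok\theta = \push(\cok\xi)=\push(\push(I))$, which was obtained from Lemma~\ref{if rank neq 2} after checking $\rank(\cok\eta)=\rank(\cok\xi)=\rank(\push(I))=2$ and char $k\neq 2$. Thus $\push(\push(I))$ has at most two nonfree indecomposable summands plus a free complement, which is exactly the alternate hypothesis of Proposition~\ref{tube proposition} with $M=I$; the remaining hypotheses of that proposition ($R$ Gorenstein of dimension one, $\m$ indecomposable, $\CM(R)$ of infinite type, and the component periodic --- periodicity because $R$ is a hypersurface) hold by the standing assumptions of~\ref{R setup}. Hence the component $C$ of $\Gamma_R$ containing $I$ is a tube. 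Finally, by Remark~\ref{Gamma: R and Rhat} the component of $\Gamma_{\Rhat}$ containing $\hat I$ is obtained from $C$ by identifying graded shifts, and such a quotient of a tube (a further cyclic quotient of $\ZZ A_\infty/\langle\tau^n\rangle$) is again a tube, so we are done. Lemma~\ref{AR gdd lemma 1} enters along the way to justify that ``no proper graded direct summand'' is the correct notion here: it makes $X$ and $Y$ (and their graded shifts) indecomposable after completion, so that the count of indecomposable summands is genuinely preserved when passing between $R$ and $\Rhat$.

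The step I expect to be the main obstacle is the passage between $\im\theta$ and $\cok\theta$: one must make sure that the (non-minimal) presentation $R^{(8)}\twoheadrightarrow\cok\theta$ only introduces free summands, and neither merges nor splits the nonfree indecomposable summands, so that the two-summand splitting really does descend to $\cok\theta$ up to free summands. This is exactly where the hypersurface structure --- periodic resolutions of period two, the matrix-factorization description of syzygies, and Krull--Schmidt for $\CM(R)$ --- does the work; once this is settled, everything else is a direct appeal to the cited results.
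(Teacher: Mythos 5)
Your proposal is correct and is essentially the paper's own argument: the lemma is just a restatement of the discussion immediately preceding it (the chain $\cok\theta=\push(\push(I))$ from Lemma~\ref{if rank neq 2}, the alternate hypothesis of Proposition~\ref{tube proposition}, and the transfer between $R$ and $\Rhat$ via Remark~\ref{Gamma: R and Rhat} and Lemma~\ref{AR gdd lemma 1}). You in fact supply more detail than the paper does on the two points it merely asserts --- that the passage from $\im\theta$ to $\cok\theta$ only gains or loses free summands (via the period-two syzygy functor and Krull--Schmidt), and that identifying graded shifts takes a tube to a tube --- so no gap here.
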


We begin by multiplying $\theta$ on the left and on the right by invertible matrices. 
Let $\id$ denote the 2-by-2 identity matrix, and let $H=\begin{pmatrix} \frac{1}{2} & 0\\ 0 & 1\\ \end{pmatrix}$.
Let $P'$ denote the 8-by-8 matrix

$P'=\begin{pmatrix} 
0 & \id & -\id & 0\\
\id & 0 &0&0\\
0&H&H&0\\
0&0&0&\id\\
\end{pmatrix}$, which is invertible with inverse $\begin{pmatrix} 
0 & \id & 0 & 0\\
\frac{1}{2}\id & 0 &\frac{1}{2}H^{-1}&0\\
-\frac{1}{2}\id&0&\frac{1}{2}H^{-1}&0\\
0&0&0&\id\\
\end{pmatrix};$ and let

 $P=\begin{pmatrix} 0 & \id&0&0\\
\frac{1}{2} \id &0&\id&0\\
-\frac{1}{2} \id &0&\id&-Z\\
0&0&0&\id\\ \end{pmatrix}$, which is invertible with inverse
$P^{-1}=\begin{pmatrix} 0 & \id & -\id & -Z\\
\id & 0 & 0 & 0\\
0 & \frac{1}{2}\id & \frac{1}{2}\id & \frac{1}{2}Z\\
0&0&0&\id \\ \end{pmatrix}$.

Now $P' \theta=
 \begin{pmatrix}
0 & \id & -\id & 0\\
\id & 0 &0&0\\
0&H&H&0\\
0&0&0&\id\\
\end{pmatrix}
\begin{pmatrix} \varphi & -\alpha & -\alpha & -Z'\\
0 & \psi & 0 & \beta-\psi Z\\
0 & 0 & \psi & \beta\\
0 & 0& 0& \varphi
\end{pmatrix}
=\begin{pmatrix} 0 & \psi & -\psi & -\psi Z\\
\varphi & -\alpha& -\alpha & -Z'\\
0 & H \psi & H \psi & H(2 \beta - \psi Z)\\
0&0&0&\varphi \\ 
\end{pmatrix}
$ , and

 $P' \theta P=\begin{pmatrix} 0 & \psi & -\psi & -\psi Z\\
\varphi & -\alpha& -\alpha & -Z'\\
0 & H \psi & H \psi & H(2 \beta - \psi Z)\\
0&0&0&\varphi \\ 
\end{pmatrix} 
\begin{pmatrix} 0 & \id &0&0\\
\frac{1}{2} \id &0&\id &0\\
-\frac{1}{2} \id &0&\id &-Z\\
0&0&0&\id \\ \end{pmatrix}
=\begin{pmatrix}\psi &0&0&0\\
0&\varphi &-2\alpha &\alpha Z-Z'\\
0&0&2H \psi & 2H(\beta-\psi Z)\\
0&0&0&\varphi \\ \end{pmatrix}$.

Let $c_j$ denote the $j$-th column of $P' \theta P$, $j=1,...,8$.
and let $M=\sum_{j=3}^8 Rc_j$. It remains to show that $M$ is an indecomposable module.
The module $M$ is graded if we take the following degrees for its generators. 

 \begin{center}
$\deg(c_3)=(v-n)p-mq$, $\deg(c_4)=-pq$, $\deg(c_5)=(v-n-1)p-q$,
 
 $\deg(c_6)=(v-1)p-(m+1)q$, $\deg(c_7)=(2v-n-2)p-(m+2)q+pq$, $\deg(c_8)=(v-2)p-2q$.\\
\end{center}

Assume $M=M' \oplus M''$ for some nonzero graded summands $M'$ and $M''$; now, by Lemma~\ref{lem:to establish}, producing a contradiction will complete our overall argument. Note that $\deg c_4< \deg c_j$ for all $j \ge 3$ different from 4. Since $R$ is connected, it  follows that $c_4$ lies in either $M'$ or $M''$; let us assume $c_4 \in M'$. Let $\pi \colon M \oplus (R c_1 + R c_2) \rightarrow M''$ denote the projection map onto $M''$, with the goal of showing that $\pi=0$. We have $\pi(c_1)=\pi(c_2)=\pi(c_4)=0$. Also immediate is $\pi(c_3)=0$ since $x^m c_3=y^{q+v-n } c_4$ and $x$ is a nonzerodivisor. 

Let $r_3,...,r_8 \in R$ be homogeneous elements such that $\sum_{j=3}^8 r_j c_j = \pi(c_5)$ and $\deg(r_j)=\deg(c_5)-\deg(c_j)$. Then each of $\deg(r_6)=-np+mq$, $\deg(r_7)=(-v+1)p+(m+1-p)q$, and $\deg(r_8)=(-n+1)p+q$ does not lie in  $\NN p + \NN q$ by Lemma~\ref{lem:numerical semigroup}, and so $r_6=r_7=r_8=0$. 

For a brief moment let us consider matrices with entries in $S$. Namely let $\tilde{W}$ denote a ``lift to $S$'' of the matrix $W$, and let $\tilde {\theta}$ be the lift of $\theta$, $\tilde \theta= \begin{pmatrix} \xi & -\tilde{W}\\
0 & \eta\\
\end{pmatrix}$. By the same reasoning used for the matrix factorization $(\xi,\eta)$, we know that $\tilde \theta$ is part of a matrix factorization $(\tilde \theta,\tilde \theta')$ where $\tilde \theta'=\begin{pmatrix} \eta & \tilde W' \\ 0 & \xi \end{pmatrix}$ for some 4-by-4 matrix $\tilde W'$. Let $\theta'=\tilde \theta' \otimes_S R$. 

As $\theta \theta'=0$, each column of matrix $P ^{-1} \theta'$ is a syzygy relation for the columns of $P' \theta P$. We can compute that the last four entries of  the column $P^{-1} \theta'_{\cdot, 4}$ are, in order, $-\frac{1}{2} y^n, \frac{1}{2}  x^m, 0, 0$. Therefore $\frac{1}{2} x^m c_6 \in \frac{1}{2} y^n c_5 + \sum_{j=1}^4 R c_j$. Then, $\pi(c_6)=\frac{y^n}{x^m}\pi(c_5)=\sum_{j=3}^5 (y^n/x^m) r_j c_j$. In particular $R$ must contain the fourth entry of this column:
 $\frac{y^n}{x^m}(r_3 y^{q+v-n}+r_4x^m-2r_5x^{m-1}y^{q-n-1}f) \in R$. Therefore, since $y^{q+v}/x^m \in R$, we have $2 r_5y^{q-1}f/x \in R$. Since $r_5 \in k$ and  char $k \neq 2$, this implies that either $r_5=0$ or $y^{q-1}f/x \in R$. If the latter were true,    
 then $rx=y^{q-1}f$ for some $r \in R$, and lifting $r$ to a preimage $s \in S$ we would have $sx -y^{q-1}f \in g S$. But  $sx -y^{q-1}f$ has nonzero $y^{q+v-1}$-term, whereas $\deg g=\deg y^{q+v}>\deg y^{q+v-1}$, so this is a contradiction. Hence $r_5=0$. Therefore $\pi(c_5) =r_3c_3+r_4c_4 \in \ker (\pi)$, hence $\pi(c_5)=0$ as $\pi$ is idempotent; and $\pi(c_6)=(y^n/x^m)\pi(c_5)=0$. 

Now we simply repeat the argument in order to show that $\pi(c_8)=\pi(c_7)=0$. For $r'_3,...,r'_8 \in R$  homogeneous such that $\sum_{j=3}^8 r'_j c_j = \pi(c_8)$ and $\deg(r'_j)=\deg(c_8)-\deg(c_j)$, each of $\deg(r'_5)=(n-1)p-q$, $\deg(r'_6)=-p+(m-1)q$, and $\deg(r'_7)=(-v+n)p+(m-p)q$ does not lie in  $\NN p + \NN q$ by Lemma~\ref{lem:numerical semigroup},  so $r'_5=r'_6=r'_7=0$. 
The last two entries of $P^{-1} \theta'_{\cdot, 7}$ are $x^m$ and $-y^{q+v-n}$, so we obtain  $x^m c_7 \in y^{q+v-n}c_8 + \sum_{j \le 6} Rc_j$, and therefore   $\pi(c_7)=(y^{q+v-n}/x^m)\pi(c_8)=(y^{q+v-n}/x^m)(r'_3c_3+r'_4c_4+r'_8c_8)$, whose fifth entry is 
$-r'_8 (y^{q+v-n}/x^m) W_{34}$. If $r'_8=0$ then $\pi(c_7) \in \ker \pi$ whence $0=\pi(c_7)=\pi(c_8)$; so, showing $r'_8=0$ is the last step. If $r'_8 \neq 0$ then it is a unit, and therefore $ (y^{q+v-n}/x^m) W_{34} \in R$. Then the lemma below would imply $y^{q+v-1}/x \in R$, and the reader can check that this is false.

\begin{lem} \label{lem:W_34} $W_{34}$, the (3,4)-th entry of the matrix $W$, lies in $k x^{m-1} y^{n-1} \setminus \{0\}$. \end{lem}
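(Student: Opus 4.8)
The plan is to identify $W_{34}$ exactly. Writing $W=\begin{pmatrix}\alpha & Z'\\ 0 & -\beta+\psi Z\end{pmatrix}$ in $2\times 2$ blocks (Definition~\ref{defn:W}), the entry $W_{34}$ is the $(1,2)$-entry of $-\beta+\psi Z$; since the displayed $\beta$ has $(1,2)$-entry $-x^{m-1}y^{n-1}$ and $\psi$ has first row $(x^m,\ y^n)$, this gives $W_{34}=x^{m-1}y^{n-1}+x^mZ_{12}+y^nZ_{22}$. Set $d:=(n-1)p+(m-1)q=\deg(x^{m-1}y^{n-1})$. The whole task is to show that the correction term $x^mZ_{12}+y^nZ_{22}$ is zero, and for this I would use homogeneity together with Lemma~\ref{lem:numerical semigroup}.

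First I would arrange $W$ to be homogeneous. The equation $\eta W=\gamma\eta$ is solvable by~\ref{gamma pushout}, and it becomes a graded system once $W$ is viewed as a degree-$(\deg\gamma)$ endomorphism of the graded free module in the source of $\eta$; taking the appropriate graded piece of any solution then yields a homogeneous solution, so that $\theta=\begin{pmatrix}\xi & -W\\ 0 & \eta\end{pmatrix}$ is a graded matrix factorization of $g$ and each entry of $W$ is homogeneous. Next I would read off $\deg W_{34}$ from the degrees of the generators $c_j$ of $M$ already listed: writing $\delta_i$ for the degree of the $i$-th basis vector of the ambient $R^{(8)}$, one has $\deg (P'\theta P)_{ij}+\delta_i=\deg c_j$ whenever $(P'\theta P)_{ij}\neq0$ (for $j\ge 3$). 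The $(5,5)$-entry of $P'\theta P$ is the $(1,1)$-entry $x^m$ of the block $2H\psi$ (and $x^m\neq0$ in $R$ since $mq<\deg g$), giving $\delta_5=\deg c_5-mq$; and the $(5,8)$-entry is the $(1,2)$-entry of the block $2H(\beta-\psi Z)$, which equals $-W_{34}$. Hence if $W_{34}\neq0$ then $\deg W_{34}=\deg c_8-\delta_5=\deg c_8-\deg c_5+mq=d$. That $W_{34}\neq0$ follows from the formula in the first paragraph: $W_{34}=0$ would put $x^{m-1}y^{n-1}$ in $(x^m,y^n)R$, and comparing homogeneous parts of degree $d$ (using $R_{d-mq}=R_{d-np}=0$, justified below) this would force $g\mid x^{m-1}y^{n-1}$ in $S$, which is impossible because $g$ is not a unit times a monomial --- except when $b=0$ and $f=y^v$, i.e.\ $g=y^{q+v}$, and even then $y^{q+v}\nmid x^{m-1}y^{n-1}$ since $n-1<q$.

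Now $W_{34}-x^{m-1}y^{n-1}=x^mZ_{12}+y^nZ_{22}$ is homogeneous of degree $d$, hence lies in $x^mR_{d-mq}+y^nR_{d-np}$. Since $d-mq=(n-1)p+(-1)q$ with $n-1<q$ and $-1<0$, and $d-np=(-1)p+(m-1)q$ with $-1<0$ and $m-1<p$, Lemma~\ref{lem:numerical semigroup} shows that neither $d-mq$ nor $d-np$ lies in $\NN p+\NN q$, so $R_{d-mq}=R_{d-np}=0$. Therefore $W_{34}=x^{m-1}y^{n-1}\in kx^{m-1}y^{n-1}\setminus\{0\}$.

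The step I expect to be the main obstacle is the degree bookkeeping in the middle paragraph: arguing that $W$ (hence $W_{34}$) can be taken homogeneous, and tracking the grading through the invertible matrices $P'$ and $P$ so that $\deg W_{34}$ comes out to exactly $d=\deg(x^{m-1}y^{n-1})$. Once that degree is in hand, the vanishing of $x^mZ_{12}+y^nZ_{22}$ is routine and entirely parallel to the numerical-semigroup arguments already used earlier in this section.
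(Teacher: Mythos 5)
Your proof is correct, but it is not the route the paper takes, and the difference is worth spelling out. The paper never computes $W_{34}$ exactly: it gets nonvanishing by reading off the $(4,4)$-entry of $\eta W=\gamma\eta$, namely $\gamma x^m=y^{q+v-n}W_{34}+x^m W_{44}$, so that $W_{34}=0$ would force $\gamma=W_{44}\in R$ (contradicting $\gamma\notin R$, using that $x$ is a nonzerodivisor); and it gets the degree directly from the homogeneity convention $\deg\eta_{ij}+\deg W_{jj'}=\deg(\gamma\eta_{ij'})$ at $(i,j,j')=(4,3,4)$, concluding $W_{34}\in kx^{m-1}y^{n-1}$ because $\gcd(p,q)=1$, $m-1<p$ and $n-1<q$ leave $x^{m-1}y^{n-1}$ as the only monomial in that degree. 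You instead exploit the block form $W=\bigl(\begin{smallmatrix}\alpha&Z'\\0&-\beta+\psi Z\end{smallmatrix}\bigr)$ together with the explicit entry $\beta_{12}=-x^{m-1}y^{n-1}$ --- which the paper pointedly says it will never need --- to write $W_{34}=x^{m-1}y^{n-1}+x^mZ_{12}+y^nZ_{22}$, and then kill the correction term with Lemma~\ref{lem:numerical semigroup}; this buys you the sharper conclusion $W_{34}=x^{m-1}y^{n-1}$ on the nose, and makes nonvanishing a triviality ($g$ does not divide a monomial of that form). The price is your middle paragraph: your degree bookkeeping routes through the listed $\deg(c_j)$ and the matrix $P'\theta P$, which works but is more indirect than the paper's one-line computation from $\eta$ and $\gamma$ (both methods of course land on $(n-1)p+(m-1)q$). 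The homogeneity of $W$ is asserted at the same level of rigor in both arguments, so there is no gap there; your proof stands as a valid, slightly stronger, alternative.
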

\begin{proof} Recall that $\eta W=\gamma \eta$ by definition of $W$. As $\eta_{4,4}=x^m$, we get $\gamma x^m=\eta_{4, \cdot} W_{\cdot,4}=y^{q+v-n} W_{34} +x^m W_{44}$. As $x$ is a nonzerodivisor and $\gamma \notin R$, we have $W_{34} \neq 0$. We naturally choose $W$ so that $\deg \eta _{ij} + \deg W_{jj'}=\deg (\gamma \eta_{ij'})$ for each $i, j, j'$. Setting $i=4$, $j=3$, $j'=4$, we have $\deg(W_{34})=\deg(\gamma \eta_{4,4})-\deg \eta_{4,3}=\deg(\gamma x^m)-\deg(y^{q+v-n})=\deg(y^{q-1} f x^{m-1})-\deg(y^{q+v-n})=(n-1)p+(m-1)q$. Since $p$ and $q$ are coprime, it follows that $W_{34} \in k x^{m-1} y^{n-1}$. 
\qed \end{proof}

\section{Another observation: $\syz([\gamma_M])$.} \label{sec:obs}

In this section, assume $R$ is a reduced complete local Gorenstein (but not regular) ring of dimension one, let $\m=\m_R$, $Q=Q(R)$,  and fix some indecomposable nonfree $M \in L_p(R)$. We aim to prove Proposition~\ref{syz gamma}, which (after some additional assumptions) states the relationship between the socle elements $[\gamma_M]$ and $[\gamma_{\syz M}]$ with respect to the $R$-algebra isomorphism $\syz_R \colon \stEnd_RM \to \stEnd_R(\syz_R (M))$. Let $M \Rbar$ denote the $\Rbar$-submodule of $M \otimes_R Q$ generated by $M$, and assume the following: $M\Rbar$ is a free $\Rbar$-module which possesses a basis consisting of elements in $M$. This is true if $R$ is a domain, since $\Rbar$ is in that case a DVR.

  \notation \label{e_i's} Fix $\gamma \in \J(\Rbar)^* \setminus R$, and fix  elements $e_1,...,e_n \in M$ forming a free $\Rbar$-basis for $M \Rbar$.   Given $h \in \End_R M $, let $\hbar$ denote the unique $\Rbar$-linear endomorphism of $M \Rbar$ extending $h$.  We regard $\hbar $ is an $n$-by-$n$ matrix with entries in $\Rbar$. Let  $I^{cd}=(R:_R \Rbar)$, the conductor ideal.

  \begin{lem} \label{gamma M Rbar}We have $\gamma M\Rbar \subseteq M$, and $I^{cd} (M \Rbar) \subset \bigoplus_i R e_i$. \end{lem}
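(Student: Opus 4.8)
The plan is to treat the two containments separately, as they come from different sources: the first is essentially the defining property of $\gamma$, and the second is the definition of the conductor together with the fact that $\{e_i\}$ is a basis of $M\Rbar$ \emph{over $\Rbar$} (not over $R$). First I would recall that $\gamma \in \J(\Rbar)^* \setminus R$ means, unwinding Definition~\ref{defn:trace} or rather the identification of $\Hom$ with colon ideals recalled at the start of Section~\ref{sec:main}, that $\gamma \in Q$ with $\gamma \J(\Rbar) \subseteq R$; in particular $\gamma \Rbar \subseteq \gamma \cdot (1 \cdot \Rbar)$, and since $\Rbar = R + \J(\Rbar)$ (as $\Rbar$ is local with residue field containing $R/\m$... more carefully, $\Rbar/\J(\Rbar)$ is a field which is a finite extension, but the relevant point is just $\gamma\Rbar \subseteq R$ fails in general) — so I should be a little careful here. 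The cleanest route: $\gamma \m_{\Rbar}$-type reasoning. Actually what we need is only $\gamma M\Rbar \subseteq M$, and $M\Rbar = \sum_i \Rbar e_i$, so it suffices to show $\gamma \Rbar e_i \subseteq M$ for each $i$. Since each $e_i \in M$ and $M$ is an $R$-module, $\gamma \Rbar e_i \subseteq (\gamma\Rbar) M$, so it is enough that $\gamma \Rbar \subseteq R$. This holds because $\gamma \in \F$-type data: from the construction, $\gamma \J(\Rbar)\subseteq R$ and also $\gamma \cdot 1 \in R$ would contradict $\gamma \notin R$ — so instead I should invoke that $\gamma \in \Hom_R(\m,R) = \m^*$ gives $\gamma\m \subseteq R$, and then bootstrap using $\m\Rbar$: since $\dim R = 1$ and $R$ is not regular, $\m\Rbar$ is still $\m_R$-primary, and one checks $\gamma \m \Rbar \subseteq R\Rbar$... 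This is the one spot that needs genuine care, and it is the main obstacle: pinning down exactly why $\gamma M\Rbar \subseteq M$ rather than merely $\subseteq M\Rbar$. I expect the right argument is: $\gamma M \Rbar = \gamma \sum_i \Rbar e_i$; now $\Rbar e_i \subseteq \Rbar M$ and $\Rbar = (1/x)\cdot(\text{something in }R)$... — better yet, use that $\gamma\Rbar \subseteq \m^{-1}$-type bound and that $M$ is reflexive/MCM so $\gamma M\Rbar$, being a finitely generated $R$-submodule of $M\otimes_R Q$ on which $\gamma\m$ acts into $M$, is contained in $M$ because $M = \{w \in M\otimes_R Q : \m w \subseteq M\}$ would require $M$ to be "$\m$-saturated," which holds as $M \in \CM(R)$ has depth one hence no $\m$-torsion in the quotient. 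I would phrase the final argument through that saturation property.

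For the second containment, $I^{cd}(M\Rbar) \subseteq \bigoplus_i Re_i$: by definition $I^{cd} = (R :_R \Rbar) = \{a \in R : a\Rbar \subseteq R\}$, so $I^{cd}\Rbar \subseteq R$. Then $I^{cd}(M\Rbar) = I^{cd}\sum_i \Rbar e_i = \sum_i (I^{cd}\Rbar)e_i \subseteq \sum_i R e_i = \bigoplus_i Re_i$, the sum being direct precisely because $e_1,\dots,e_n$ is an $\Rbar$-basis of $M\Rbar$, hence a fortiori $R$-linearly independent inside $M\otimes_R Q$. This half is a one-line computation and requires no cleverness.

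So the structure of the write-up is: (1) reduce $\gamma M\Rbar \subseteq M$ to the $\m$-saturation property of $M \in \CM(R)$ together with $\gamma\m \subseteq R$ and $\m M\Rbar \subseteq M$ — this last following since $\m\Rbar$ is $\m$-primary so some power of $\m$ lies in $I^{cd}$, giving $\m^k M\Rbar \subseteq M$, and then induct down using saturation; (2) deduce the conductor containment directly from $I^{cd}\Rbar \subseteq R$ and the fact that the $e_i$ form an $\Rbar$-basis. The only delicate point, as noted, is (1), and I would handle it by explicitly invoking that $M\otimes_R Q / M$ has no nonzero elements killed by a power of $\m$ — equivalently $\depth_\m M \geq 1$ — which is exactly maximal Cohen–Macaulayness in dimension one.
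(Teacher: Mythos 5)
Your treatment of the second containment is fine and matches the paper: $I^{cd}\Rbar \subseteq R$ by definition of the conductor, so $I^{cd}(M\Rbar)=I^{cd}\sum_i\Rbar e_i\subseteq\sum_i Re_i$, and the sum is direct since the $e_i$ are an $\Rbar$-basis. But your argument for the main containment $\gamma M\Rbar\subseteq M$ has a genuine gap. The ``$\m$-saturation'' property you invoke --- that $M=\{w\in M\otimes_R Q:\m w\subseteq M\}$, justified by $\depth_\m M\ge 1$ --- is false. Depth $\ge 1$ says $M$ itself has an $M$-regular element in $\m$; it says nothing about $\m$-torsion in $(M\otimes_R Q)/M$. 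In fact, in dimension one that quotient is \emph{entirely} $\m$-power-torsion (every element of $Q$ has the form $m/s$ with $s$ a nonzerodivisor, and $\m^k\subseteq(s)$ for some $k$), and the degree-one part of the saturation already fails in the simplest examples: for $R=k[[t^2,t^3]]$ and $M=\m$, the set $\{w\in Q:\m w\subseteq\m\}$ is $\End_R\m\supseteq R\supsetneq\m$. So the ``induct down from $\m^kM\Rbar\subseteq M$'' step collapses, and this was exactly the point you flagged as delicate.

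The paper's argument uses two facts you did not combine. First, since $\J(\Rbar)$ is an ideal of $\Rbar$, the containment $\gamma\J(\Rbar)\subseteq R$ self-improves: $\gamma\J(\Rbar)$ is an $\Rbar$-submodule of $R$, hence lies in the conductor $I^{cd}=(R:_R\Rbar)$. Consequently $(\Rbar\gamma)\m\subseteq(\Rbar\gamma)\J(\Rbar)\subseteq I^{cd}\subseteq\m$, i.e.\ $\Rbar\gamma\subseteq\End_R\m$ (viewed inside $Q$). Second --- and this is the structural input replacing your saturation claim --- $M$ is a module over the ring $\End_R\m$: since $M$ is nonfree indecomposable it admits no surjection onto $R$, so $M^*=\Hom_R(M,\m)$ and $M\cong\Hom_R(M,\m)^*$ (this is Notation~\ref{gamma_M}). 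Hence $(\Rbar\gamma)M\subseteq M$, which is exactly $\gamma M\Rbar\subseteq M$. Your write-up should be reorganized around these two points rather than around saturation.
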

  
 \begin{proof}  As $(R:_R \J(\Rbar))=(I^{cd}:_R \J(\Rbar))$, we have $\gamma \J(\Rbar) \subseteq I^{cd}$. Therefore $(\Rbar \gamma) \m \subseteq (\Rbar \gamma) \J(\Rbar) \subseteq I^{cd} \subseteq \m$, which says that $\Rbar \gamma \subseteq \End_R \m$. Since $M \cong \Hom_R(M,\m)^*$ is an $\End_R \m$-module,  we obtain $(\Rbar \gamma) M \subseteq M$, equivalently $\gamma M \Rbar \subseteq M$. That $I^{cd} (M \Rbar) \subset \bigoplus_i R e_i$ is obvious. \qed \end{proof}
 
 We have the following immediate consequence.
 \begin{lem} \label{matrix elements} Let $A \in \End_{\Rbar} (M \Rbar)$, i.e. $A$ is an $n \times n$ matrix with entries in $\Rbar$ (recall that we have a fixed basis, $\{e_1,...,e_n\}$). If each entry of $A$ lies in $\gamma \Rbar$, then $A$ sends $M$ into $M$. If each entry of $A$ lies in $I^{cd}$, then $A|_M: M \ra M$ is stably zero.
 \end{lem}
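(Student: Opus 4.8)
The statement to prove is \textbf{Lemma~\ref{matrix elements}}: if $A$ is an $n\times n$ matrix over $\Rbar$ (acting on $M\Rbar$ via the fixed basis $\{e_1,\dots,e_n\}$), then (i) if every entry of $A$ lies in $\gamma\Rbar$, then $A(M)\subseteq M$; and (ii) if every entry of $A$ lies in $I^{cd}$, then $A|_M\colon M\to M$ is stably zero.

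The plan is to reduce both parts to facts already in hand, chiefly Lemma~\ref{gamma M Rbar} and Lemma~\ref{trace argument!}. For part (i), I would argue as follows. Since each entry $A_{ij}\in\gamma\Rbar$, write $A_{ij}=\gamma a_{ij}$ with $a_{ij}\in\Rbar$, so $A=\gamma\cdot A'$ where $A'=(a_{ij})\in\End_{\Rbar}(M\Rbar)$. For any $m\in M\subseteq M\Rbar$ we have $A'm\in M\Rbar$, and then $Am=\gamma(A'm)\in\gamma M\Rbar\subseteq M$ by the first conclusion of Lemma~\ref{gamma M Rbar}. That settles (i) in one line; it is genuinely immediate, as the paper says.

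For part (ii), the first observation is that if every entry of $A$ lies in $I^{cd}=(R:_R\Rbar)$, then in particular $A$ has entries in $R$ and, by Lemma~\ref{gamma M Rbar}, $A(M\Rbar)\subseteq I^{cd}(M\Rbar)\subseteq\bigoplus_i Re_i\subseteq M$, so $A|_M$ is a well-defined element $h$ of $\End_R M$; the task is to show $h$ is stably zero. By Lemma~\ref{trace argument!}, since $R$ is Gorenstein of dimension one and $M\in\CM(R)$, it suffices to show $\trace(gh\otimes_R Q)\in R$ for every $g\in\End_R M$. Now for any $g\in\End_R M$, the composite $gh$ extends to the $\Rbar$-linear map $\bar g\,A$ on $M\Rbar$, whose matrix is $\bar g\,A$ (product of the matrix of $\bar g$ with $A$); since every entry of $A$ lies in the ideal $I^{cd}$ of $\Rbar$, every entry of $\bar g\,A$ lies in $I^{cd}$ as well. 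After tensoring with $Q$, we get $M\otimes_R Q = M\Rbar\otimes_{\Rbar}Q(\Rbar)$ (using that $\Rbar\hookrightarrow Q$ since $R$ is reduced and $Q=Q(\Rbar)$), and $gh\otimes_R Q$ is represented, in the $Q$-basis $\{e_i\}$, by the same matrix $\bar g\,A$. Because trace computed via any free basis is the ordinary matrix trace (Remark~\ref{rmk:trace}), $\trace(gh\otimes_R Q)=\sum_i (\bar g\,A)_{ii}\in I^{cd}\subseteq R$. Hence $h$ is stably zero by Lemma~\ref{trace argument!}, proving (ii).

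The only point requiring a little care — and the main (mild) obstacle — is the identification of $\trace(gh\otimes_R Q)$ with the matrix trace $\sum_i(\bar g\,A)_{ii}$ taken over $\Rbar$ (or $Q$), i.e.\ checking that the basis $\{e_i\}$ of $M\Rbar$ really becomes a $Q$-basis of $M\otimes_R Q$ and that Definition~\ref{defn:trace}'s trace agrees with the naive one in this basis. This follows because $M\Rbar$ is $\Rbar$-free on $\{e_i\}$ by hypothesis, localizing at the nonzerodivisors of $R$ gives $M\otimes_R Q\cong (M\Rbar)\otimes_{\Rbar}Q$ free on the images of the $e_i$ (note $Q=Q(R)=Q(\Rbar)$ for $R$ reduced of dimension one, cf.\ the discussion before Lemma~\ref{trace is integral}), and then Remark~\ref{rmk:trace} applies verbatim. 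Everything else is a routine manipulation of matrices over the commutative ring $\Rbar$, using only that $I^{cd}$ is an ideal and Lemma~\ref{gamma M Rbar}.
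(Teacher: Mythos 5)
Your proof is correct, but for the second assertion you take a noticeably longer route than the one the paper intends when it calls the lemma an ``immediate consequence'' of Lemma~\ref{gamma M Rbar}. Part (i) is exactly the intended one-liner. For part (ii), the point of the second conclusion of Lemma~\ref{gamma M Rbar} is that $I^{cd}(M\Rbar)\subseteq\bigoplus_i Re_i$, so when the entries of $A$ lie in the conductor (an ideal of $\Rbar$) one has $A(M)\subseteq A(M\Rbar)\subseteq\bigoplus_i I^{cd}e_i\subseteq\bigoplus_i Re_i\subseteq M$; since $\bigoplus_i Re_i$ is a free $R$-module (the $e_i$ being $\Rbar$-, hence $R$-, linearly independent), $A|_M$ factors through a free module and is stably zero directly from Definition~\ref{defn:stable}. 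You instead verify stable vanishing via the trace criterion of Lemma~\ref{trace argument!}, computing $\trace(gh\otimes Q)$ as the matrix trace of $\bar g A$ and observing it lands in $I^{cd}\subseteq R$. Your argument is sound — the identification of $M\otimes_R Q$ with the free $Q$-module on the $e_i$ and the agreement of Definition~\ref{defn:trace} with the naive matrix trace are both legitimate and are used elsewhere in the paper (e.g.\ in Lemmas~\ref{trace-in-radical} and~\ref{sigma with a single column}) — but it needlessly invokes the Gorenstein and Cohen--Macaulay hypotheses through Lemma~\ref{trace argument!}, whereas the factorization argument needs nothing beyond the containment $I^{cd}(M\Rbar)\subseteq\bigoplus_i Re_i$ and the definition of stably zero.
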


\begin{lem} \label{sigma with a single column}  There exists $f \in \End_RM$ satisfying the following conditions:

 (i) $[f]$ generates $\soc(\stEnd_RM)$;

(ii) all nonzero entries of $\fbar$ lie in $\Rbar \gamma$. 

(iii) the first column of $\fbar$ is its only nonzero column.

(iv) $\fbar_{1,1}=\gamma$. \end{lem}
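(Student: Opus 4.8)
The statement asserts the existence of a single endomorphism $f$ of $M$ whose class generates the socle of $\stEnd_R M$ and whose matrix $\fbar$ (with respect to the fixed basis $e_1,\dots,e_n$ of $M\Rbar$) has all its nonzero entries in $\Rbar\gamma$, is supported only in the first column, and has $(1,1)$-entry exactly $\gamma$. I would begin from Theorem~\ref{main theorem}: since $R$ is reduced, $\min R$ is the set of minimal primes $\p_1,\dots,\p_s$, and $\Rbar=\prod_j \overline{R/\p_j}$; because $M$ is indecomposable, it is supported at exactly one minimal prime $\p=\p_j$, so running the recipe of Notation~\ref{gamma_M} for that $\p$ produces $\gamma_M$ with $[\gamma_M]\in\soc(\stEnd_R M)$ and (using $n:=\dim_{R'_\p}(M\otimes_R R'_\p)$, which is invertible if $R$ is a domain since then it equals $\rank M$, and in general we are assuming such a freeness hypothesis) $[\gamma_M]\neq 0$. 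So $[\gamma_M]$ generates the simple socle. The content of the lemma is then: we may adjust $\gamma_M$ by a stably-zero map and by scaling so that its matrix takes the stated normal form.

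First I would analyze $\gamma_M$ as a matrix. By Lemma~\ref{gamma M Rbar}, $\gamma M\Rbar\subseteq M$, so the $\Rbar$-linear extension $\overline{\gamma_M}$ of $\gamma_M$ to $M\Rbar$ is just multiplication by $\gamma$ (regarded as an element of the ring $\prod_j Q(R/\p_j)$ which acts on $M\otimes_R Q$ through its component at $\p$), i.e. $\overline{\gamma_M}=\gamma\cdot\mathrm{id}_{M\Rbar}$ after identifying $M\Rbar$ with a free module over the local component $\overline{R/\p}$. Hence $\overline{\gamma_M}$ is the scalar matrix $\gamma\cdot\id_n$, whose entries all lie in $\Rbar\gamma$, but which is diagonal rather than first-column-supported. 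So I must modify $\gamma_M$ by an endomorphism that (a) does not change its stable class, (b) keeps all matrix entries inside $\Rbar\gamma$, and (c) kills the entries $(2,2),\dots,(n,n)$ by replacing the diagonal $\gamma,\dots,\gamma$ with $\gamma,0,\dots,0$, while moving nothing new outside the first column. The key observation is Lemma~\ref{matrix elements}: an $n\times n$ matrix over $\Rbar$ all of whose entries lie in the conductor $I^{cd}$ restricts to a \emph{stably zero} endomorphism of $M$. Now $\gamma\J(\Rbar)\subseteq I^{cd}$ (this is the computation inside the proof of Lemma~\ref{gamma M Rbar}: $(R:_R\J(\Rbar))=(I^{cd}:_R\J(\Rbar))$ together with $\gamma\in\J(\Rbar)^*$), and more relevantly $\gamma\cdot(\text{non-unit of }\Rbar)\in I^{cd}$ when $\Rbar$ is a DVR — because then $\J(\Rbar)=\m_{\Rbar}$ and $\gamma\m_{\Rbar}\subseteq I^{cd}$ exactly.

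So here is the construction. Working in the DVR (or product-of-DVR-type) situation, write the elementary matrix $E$ with a single off-diagonal (or diagonal-modification) entry; I want, for each $i=2,\dots,n$, to subtract off the $(i,i)$ diagonal block. Concretely, consider the matrix $A:=\overline{\gamma_M}-\gamma\cdot E_{11}=\mathrm{diag}(0,\gamma,\dots,\gamma)$; its entries lie in $\Rbar\gamma$ but not obviously in $I^{cd}$. To fix this I instead argue at the level of \emph{stable} equivalence: I claim the endomorphism $f:=\gamma_M$ already satisfies (i), and that after subtracting a suitable stably-zero endomorphism whose matrix has entries in $I^{cd}$ (hence, by Lemma~\ref{matrix elements}, genuinely stably zero, hence not affecting $[f]$), the matrix becomes the claimed $\gamma\cdot E_{11}$. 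The stably-zero correction is $\mathrm{diag}(0,\gamma,\dots,\gamma)$ itself once we check its entries lie in $I^{cd}$ — and this is where I would use that $\gamma\cdot u\in I^{cd}$ for every $u\in\J(\Rbar)$, noting $\gamma = \gamma\cdot 1$ is \emph{not} in $I^{cd}$ (that is exactly why $\gamma\notin R$), so we cannot also remove the surviving $(1,1)$-entry — which is consistent with (iv). The one subtlety: $\mathrm{diag}(0,\gamma,\dots,\gamma)$ has entries $\gamma$, which lie in $I^{cd}$ only if $\gamma\in I^{cd}$, which is false; so the naive correction does not literally work. The fix is to first apply an $\Rbar$-automorphism (change of basis among $e_2,\dots,e_n$ is not enough; one needs a unipotent conjugation) so that the scalar matrix $\gamma\id_n$ is conjugated — but a scalar matrix is central, so conjugation does nothing. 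Therefore the honest route is: do \emph{not} try to reach $\gamma E_{11}$ by a stably-zero perturbation of the scalar $\gamma\id_n$; instead, \emph{choose a different representative} $f$ of the socle class from the start, built directly as $\mu_M$ of an appropriate tensor, engineered to have matrix $\gamma E_{11}$, and then verify $[f]$ generates the socle by showing $f-\gamma_M$ is stably zero via a trace computation (Lemma~\ref{trace argument!}): compute $\trace((f-\gamma_M)g\otimes Q)$ for all $g$ and check it lands in $R$, using that $f$ and $\gamma_M$ have the same trace against everything supported at $\p$ up to conductor-level corrections.

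**The main obstacle.** The delicate point, and where I would spend the most care, is exactly this: producing an $f$ with matrix supported on the first column with $(1,1)$-entry $\gamma$ \emph{and} simultaneously certifying $[f]\in\soc$, $[f]\neq 0$. My plan is to exhibit $f$ explicitly as $f=\mu_M(\varphi\otimes e_1)$ followed by postcomposition with "multiplication by $\gamma$" — i.e. take $\varphi\in M^*$ a coordinate functional dual to $e_1$ extended from $M\Rbar$, scaled into the conductor so that $\varphi$ genuinely maps $M$ into $R$, and set $f(m)=\gamma\varphi(m)e_1$; this manifestly has matrix $\gamma E_{11}$ over $\Rbar$ (entries in $\Rbar\gamma$, only first column nonzero, $(1,1)$-entry $\gamma$ after normalizing $\varphi(e_1)=1$, which is legitimate up to a unit). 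Then the real work is: (1) check $f$ actually lands in $\End_R M$ — needs $\gamma\varphi(M)e_1\subseteq M$, which follows because $\varphi(M)\subseteq\Rbar$ and $\gamma\Rbar e_1\subseteq\gamma M\Rbar\subseteq M$ by Lemma~\ref{gamma M Rbar}; and (2) check $[f]$ generates $\soc(\stEnd_R M)$ — since the socle is simple and $1$-dimensional over $k$, it suffices to show $[f]\neq 0$, which by Lemma~\ref{trace argument!} means finding some $g\in\End_R M$ with $\trace(gf\otimes Q)\notin R$; taking $g$ to be (a conductor-multiple of) the rank-one idempotent $m\mapsto\varphi(m)e_1$ gives $\trace(gf\otimes Q)=\gamma\varphi(e_1)^2\cdot(\text{unit}) + (\text{conductor})$, and since $\gamma\notin R$ this is not in $R$ for an appropriate normalization. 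I expect steps (1) and (2) to be routine once set up; the genuinely fiddly part is the bookkeeping of conductor factors ensuring $\varphi(M)\subseteq R$ versus $\subseteq\Rbar$, and making sure the normalization $\fbar_{1,1}=\gamma$ survives — this is where I would be most careful, and possibly where the statement is meant to be read up to a unit of $\Rbar$ absorbed into the basis element $e_1$.
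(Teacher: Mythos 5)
Your construction of the endomorphism itself is the same as the paper's: the matrix with $(1,1)$-entry $\gamma$ and all other entries zero defines an honest $h\in\End_RM$ by Lemma~\ref{matrix elements}, and $\trace(\hbar)=\gamma\notin R$ shows $[h]\neq 0$ by Lemma~\ref{trace argument!}. The gap is in your step (2), where you assert that, because $\soc(\stEnd_RM)$ is simple, it suffices to check $[f]\neq 0$. Simplicity of the socle only says that a nonzero element \emph{of the socle} generates it; it does not say that every nonzero class lies in the socle. For your $f$, membership in the socle amounts (via Lemma~\ref{trace argument!}) to $\trace(uf\otimes Q)=\gamma\,\overline{u}_{1,1}\in R$ for every nonisomorphism $u\in\End_RM$, i.e.\ to $\overline{u}_{1,1}\in\J(\Rbar)$ for every such $u$. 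Lemma~\ref{trace-in-radical} controls only the full trace $\sum_i\overline{u}_{i,i}$, not an individual diagonal entry, so there is no reason for this to hold, and your (i) is not established. (Your fallback of showing $f-\gamma_M$ stably zero fails for the reason you yourself noticed: its trace is $(1-n)\gamma$, generally not in $R$.)

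The paper closes exactly this gap by combining essentiality of the socle with the observation that your normal form is stable under \emph{left} composition. Since $[h]\neq0$, essentiality gives $g$ with $[gh]$ a nonzero element, hence a generator, of $\soc(\stEnd_RM)$; then Lemma~\ref{trace argument!} and symmetry of trace give $h'$ with $\trace(h'gh\otimes Q)\notin R$, and $[h'gh]$ is again a nonzero socle element, hence a generator. Setting $f=h'gh$, the matrix $\fbar=\overline{h'g}\,\hbar$ still has only its first column nonzero, with entries in $\Rbar\gamma$, which gives (ii) and (iii); consequently $\trace(\fbar)=\fbar_{1,1}\in(\Rbar\gamma)\setminus R$, and the coefficient must be a unit of $\Rbar$ (a non-unit coefficient would put $\fbar_{1,1}\in\gamma\J(\Rbar)\subseteq I^{cd}\subseteq R$), so dividing by that unit --- still an endomorphism of $M$ by Lemma~\ref{matrix elements} --- yields (iv). You need to add the $g,h'$ step; without it you have (ii)--(iv) and $[f]\neq0$, but not socle membership.
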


\begin{proof} 
 If we take an $n \times n$ matrix $A$ with $A_{1,1}=\gamma$ and all other entries zero, then by Lemma~\ref{matrix elements}, $A=\hbar$ for some  endomorphism $h \in \End_RM$. As $\trace(h \otimes Q)=\trace(\hbar)=\gamma \notin R$,  $h$ is stably nonzero by Lemma~\ref{trace argument!}. Therefore by essentiality of the socle of $\stEnd _RM$, there exists  $g \in \End_RM$ such that $[gh]$ generates $\soc(\stEnd_RM)$. By Lemma~\ref{trace argument!}, there exists $h' \in \End_RM$ such that $\trace(ghh' \otimes_R Q) \notin R$, i.e. $\trace(h'gh \otimes_R Q) \notin R$. As $h'gh$ is stably nonzero by Lemma~\ref{trace argument!} once more, $[h'gh]$ generates $\soc(\stEnd_RM)$. Let $f=h'gh$. Now $\trace(f \otimes_R Q)=\trace(\fbar)=\fbar_{1,1} \in (\gamma \Rbar) \setminus R$. Therefore $\fbar_{1,1} =u \gamma $ for some unit $u \in \Rbar$. Finally, replacing $\fbar$ by $u^{-1} \fbar$, the result still sends $M$ into $M$, by Lemma~\ref{matrix elements}. \qed \end{proof}
 
 Note that $\syz_R$ is in general a well-defined functor on the stable category. In particular it gives an isomorphism of $R$-algebras $\stEnd_RM \to \stEnd_R(\syz_R (M))$. 
 
 For the remainder, assume $R$ is a domain, and assume $k=R/\m$ is algebraically closed.

 \begin{prop} \label{negative trace}  If $f \in \End_RM$ and $g\in \End_R(\syz_R M)$ are given such that $[f] \in \soc (\End_RM)$ and $[g] =\syz_R([f])$, then $\trace \fbar+\trace \gbar \in R$. \end{prop}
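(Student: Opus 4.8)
The plan is to exploit the matrix-factorization picture developed in the excerpt, together with a short exact sequence relating $M$ and $\syz_R M$ inside the integral-closure model $M\overline R$. Concretely, choose a minimal free presentation $0 \to \syz_R(M) \xrightarrow{\iota} F \xrightarrow{p} M \to 0$ with $F = R^{(n)}$ free, and let $(\varphi,\psi)$ be the corresponding matrix factorization (so $\syz_R M \cong \im(\psi \otimes_S R) \cong \cok\psi$ and $M \cong \cok\varphi$), using that $R$ is a hypersurface — no, wait, $R$ here need only be a reduced complete local Gorenstein domain, so instead I would work directly with the presentation and avoid matrix factorizations. Given $f \in \End_R M$ representing a socle element of $\stEnd_R M$, lift $f$ through the presentation: since $fp$ is a map $F \to M$ it lifts (as $F$ is free) to $\tilde f \colon F \to F$ with $p\tilde f = fp$, and then $\tilde f$ restricts to $\syz_R(\tilde f) =: g' \colon \syz_R M \to \syz_R M$, whose class in $\stEnd_R(\syz_R M)$ is $\syz_R([f])$ by definition of the functor. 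So I may take $g = g'$, i.e. I have a commutative diagram with rows the presentation and vertical maps $g, \tilde f, f$.

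The heart of the argument is then a trace computation. Tensor the whole diagram with $Q = Q(R)$; since $M, \syz_R M \in L_p(R)$, both become projective over $Q$ and $F \otimes Q = Q^{(n)}$, and we get a short exact sequence of $Q$-modules $0 \to \syz_R M \otimes Q \to Q^{(n)} \to M \otimes Q \to 0$ with endomorphisms $g \otimes Q$, $\tilde f \otimes Q$, $f \otimes Q$ compatible with it. By additivity of trace on a short exact sequence of endomorphisms of projective modules over the (semisimple, since $R$ is reduced) ring $Q$ — this is the standard fact, and is exactly the kind of statement that follows from Definition~\ref{defn:trace} and Remark~\ref{rmk:trace} by reducing to each factor field — we get
\begin{equation*}
\trace(\tilde f \otimes Q) = \trace(g \otimes Q) + \trace(f \otimes Q).
\end{equation*}
Now $\tilde f$ is an honest $R$-linear endomorphism of the \emph{free} module $F = R^{(n)}$, so $\trace(\tilde f \otimes Q) = \trace(\tilde f) \in R$ by Remark~\ref{rmk:trace}. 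Hence $\trace(f\otimes Q) + \trace(g \otimes Q) \in R$. Finally, translating into the bar-notation: by the setup of Notation~\ref{e_i's} and the discussion preceding it (using that $R$ is a domain, so $M\overline R$ is free with a basis in $M$), $\trace(f \otimes Q) = \trace \fbar$ and likewise for $\syz_R M$ in place of $M$ (which also satisfies the freeness hypothesis, since $R$ is a domain), giving $\trace \fbar + \trace\gbar \in R$, as desired. Note the hypothesis $[f] \in \soc(\stEnd_R M)$ is not actually needed for this identity — it holds for any $f$ — but it is harmless to keep it.

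The main obstacle, and the one place requiring care, is the additivity of trace along the short exact sequence of $Q$-modules: one must check that the chosen families $\{\phi_i, e_i\}$ witnessing projectivity of $\syz_R M \otimes Q$ and $M \otimes Q$ can be assembled, via the splitting of the sequence over $Q$, into a witnessing family for $Q^{(n)}$ compatible with $\tilde f \otimes Q$, so that $\sum_i \phi_i((\tilde f\otimes Q)(e_i))$ genuinely splits as the sum of the two sub-traces. Since $Q$ is semisimple the sequence $0 \to \syz_R M \otimes Q \to Q^{(n)} \to M\otimes Q \to 0$ splits, and in a compatible basis $\tilde f \otimes Q$ is block upper-triangular with diagonal blocks $g \otimes Q$ and $f \otimes Q$; then additivity of trace is the elementary ``trace of a block-triangular matrix'' fact on each factor field of $Q$, invoked through Remark~\ref{rmk:trace}. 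The only subtlety beyond that is bookkeeping of graded shifts, which the paper has already declared it will ignore; I would add a parenthetical to that effect. Everything else is routine diagram-chasing.
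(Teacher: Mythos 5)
Your proof is correct, and it takes a genuinely different and noticeably simpler route than the paper's. The paper first uses simplicity of the socle (this is where $k$ algebraically closed and the hypothesis $[f]\in\soc(\stEnd_RM)$ enter) to reduce to the special representative $f$ of Lemma~\ref{sigma with a single column}, then writes down an explicit lift $A\colon F\to F$ with a single essentially nonzero column, shows that the induced endomorphism of $\syz_RM$ has rank-one image over $Q$ with eigenvalue $A_{1,1}-\gamma$, and extracts $\trace\gbar$ from that eigenvalue via the final lemma. Your argument replaces all of this with the observation that for \emph{any} lift $\tilde f$ of \emph{any} $f$ through $0\to\syz_RM\to F\to M\to0$, block-triangularity of $\tilde f\otimes Q$ over the field $Q$ gives $\trace(\tilde f)=\trace(f\otimes Q)+\trace(g'\otimes Q)$ with the left side in $R$; this is both more elementary and strictly more general (no socle hypothesis, no algebraic closedness). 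It is in fact consistent with the paper's computation, where $\trace(A)=A_{1,1}=\gamma+(A_{1,1}-\gamma)$. Two small points you should make explicit: (i) passing from your $g'=\syz_R(\tilde f)$ to the given $g$ with $[g]=[g']$ uses that a stably zero endomorphism has trace in $R$, which is the easy direction of Lemma~\ref{trace argument!} applied with the identity; and (ii) the aside about graded shifts is vacuous here, since Section~8's standing hypotheses put us over a complete local domain, so $Q$ is simply a field and the ``semisimple product of fields'' bookkeeping collapses to ordinary linear algebra.
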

 
 \begin{proof}    By Lemma~\ref{trace argument!}, trace induces well-defined maps $\stEnd_RM \to \Rbar /R$ and $\stEnd_R(\syz_R M) \to \Rbar /R$. 
 As $\syz_R$ gives an isomorphism of $R$-algebras $\stEnd_R M \to \stEnd_R (\syz_R M)$, it restricts to an isomorphism  on socles, which are $R$-simple due to $k$ being algebraically closed. Because of these remarks, we can take our pick of $f$ and $g$, as long as $[f] \neq 0$ and $[g]=\syz_R([f])$; we will choose $f$ as in Proposition~\ref{sigma with a single column}. Let $n= \rank(M)$, and $\nu > n$ be the minimal number of generators of $M$. Let $\xi_1,..., \xi_\nu$ be a set of generators for $M$, such that $\{ e_1=\xi_1,...,e_n=\xi_n \}$ is an $\Rbar$-basis for $M \Rbar$.  For each $\xi_j$ we have an equation $\xi_j=\sum_{i=1}^n w_{i,j} e_i$, for $w_{i,j} \in \Rbar$. Since $\Rbar=R+\J(\Rbar)$ (due to $k$ being algebraically closed), we may assume  that for each $j>n$, and for each $i$, we have $w_{i,j} \in \J(\Rbar)$ and therefore  $w_{i,j} \gamma \in I^{cd}$.
 
 Take a free cover $\pi: F \to M$ sending $i$-th basis element to $\xi_i$. Since $f\in \End_RM$ is as in Proposition~\ref{sigma with a single column}, there is a $\nu \times \nu$ matrix $A:F \to F$ such that $\pi A=f \pi$, with the following properties. Columns 2 through $n$ of $A$ are zero. In addition, $A_{ij}=w_{1j}\fbar_{i1}$ for $(i,j) \in \{1,...,n\} \times \{n+1,...,\nu\}$, and $A_{ij}=0$ for  $(i,j) \in \{n+1,...,\nu\} \times \{n+1,...,\nu \}$. Set $N=\ker(\pi)$, and let $\vec{r}=[r_1,...,r_\nu]^T \in N$, that is, $\sum_{j=1}^\nu r_j \xi_j=0$. Recalling that $M \Rbar$ is free, and projecting onto the basis element $e_1$, we get $r_1+\sum_{j=n+1}^\nu r_j w_{1,j}=0$. If we set $\fbar_{i1}=0$ for $i >n$, then by definition of $A$ we have that the $i$-th entry of $A \vec{r}$ is $A_{i1}r_1+\sum_{j=n+1}^\nu w_{1j} \fbar_{i1} r_j=A_{i1}r_1+\fbar_{i1} \sum_{j=n+1}^\nu r_j w_{1,j}$, and by the above equation this equals $(A_{i1}-\fbar_{i1})r_1$. In other words, $A \vec{r}=r_1 \vec{v}$ where $\vec{v}=[v_1,...,v_{\nu}]^T \in F$ is given by $v_i =A_{i1}-\fbar_{i1}$. So if we let $g\in \End_RN$ be the restriction of $A$, we see that the image of $g$ has rank 1. We also see that the $A^2 \vec{r}=A (r_1 \vec{v})=r_1v_1 \vec{v}$, so that $v_1$, which equals $A_{1,1}-\gamma$, is an eigenvalue for $g$. Our goal is to show that $\trace(\gbar) +\gamma \in R$. Since $\trace(\gbar)=\trace(g \otimes_R Q)$ and $\im (g \otimes_R Q) \cong Q$, the following lemma finishes the proof. \qed \end{proof}
 
 \begin{lem} If $\phi:F \to F$ is an endomorphism of a free module over a domain $D$, with $\im(\phi)\cong D$, and $\lambda$ is an eigenvalue for $\phi$, then $\lambda=\trace(\phi)$. \end{lem}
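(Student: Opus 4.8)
The plan is to reduce immediately to the case where $D$ is a field, by extending scalars to $K=Q(D)$. Since $F$ is free over the domain $D$ it is torsion-free, so $F\into F\otimes_D K$ and any eigenvector of $\phi$ survives in $F\otimes_D K$ with the same eigenvalue; and since $\im(\phi)\cong D$ and $-\otimes_D K$ is exact, $\im(\phi\otimes_D K)\cong K$, i.e. $\phi_K:=\phi\otimes_D K$ has rank one. So it suffices to prove: if $V$ is a finite-dimensional $K$-vector space, $\phi\colon V\to V$ is linear of rank one, and $\phi(v)=\lambda v$ for some $v\ne 0$, then $\lambda=\trace(\phi)$. (More precisely the argument gives $\lambda^2=\lambda\,\trace(\phi)$ for any eigenvalue; the spurious possibility $\lambda=0$ does not occur in the application to Proposition~\ref{negative trace}, where the relevant eigenvalue is $A_{1,1}-\gamma\notin R$, hence nonzero.)

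Next I would exploit the rank-one structure directly. Choose $w\ne 0$ in $\im(\phi)$; then $\im(\phi)=Kw$, so there is a unique linear functional $\psi\colon V\to K$ with $\phi(u)=\psi(u)\,w$ for all $u\in V$. Extending $w$ to a basis of $V$, the matrix of $\phi$ has all rows zero except the one indexed by $w$, whose entries are the values of $\psi$ on the basis vectors; reading off the diagonal gives $\trace(\phi)=\psi(w)$. Consequently $\phi^2(u)=\psi(u)\,\phi(w)=\psi(u)\psi(w)\,w=\psi(w)\,\phi(u)$ for all $u$, that is, $\phi^2=\trace(\phi)\cdot\phi$.

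Finally, applying this identity to the eigenvector $v$: on one hand $\phi^2(v)=\lambda\phi(v)=\lambda^2 v$, and on the other $\phi^2(v)=\trace(\phi)\,\phi(v)=\lambda\,\trace(\phi)\,v$. Since $v\ne 0$ and $K$ is a field (in particular a domain), $\lambda^2=\lambda\,\trace(\phi)$, so $\lambda\bigl(\lambda-\trace(\phi)\bigr)=0$ and hence $\lambda=\trace(\phi)$ (using $\lambda\ne 0$ in the case at hand).

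I do not anticipate a real obstacle here: the statement is a short piece of linear algebra once one has passed to the fraction field. The only points requiring care are bookkeeping — checking that the reduction to a field is legitimate (torsion-freeness of $F$, exactness of $-\otimes_D K$, so that the image stays rank one and the eigenvector stays nonzero) — and, if one wants the literal equality $\lambda=\trace(\phi)$ rather than $\lambda(\lambda-\trace(\phi))=0$, noting that the root $\lambda=0$ is excluded in the intended application.
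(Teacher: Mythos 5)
Your proof is correct and is essentially the paper's own argument: both exploit the rank-one structure by writing $\phi(u)=\psi(u)\,w$ for a generator $w$ of $\im(\phi)$ and identifying $\trace(\phi)$ with $\psi(w)$ (the paper does this in coordinates over $D$ itself, as $\lambda=\sum_j x_j y_j=\sum_j\phi_{jj}$, without passing to $Q(D)$). Your explicit caveat about $\lambda=0$ is well taken rather than pedantic: the lemma as literally stated fails for, e.g., $\mathrm{diag}(1,0)$, and the paper's step ``it is easily checked that $\phi(\vec{x})=\lambda\vec{x}$'' silently uses $\lambda\neq 0$ in exactly the same way, so noting that the application supplies a nonzero eigenvalue ($A_{1,1}-\gamma\notin R$) is the correct way to close that gap.
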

 
 \begin{proof} Let $\vec{x}=[x_1,...,x_s]^T \in F$ generate the image of $\phi$. It is easily checked that $\phi(\vec{x})=\lambda \vec{x}$. Let $y_1,...,y_s \in D$ such that $\phi_{\cdot, j}=y_j \vec{x}$. Then $\lambda \vec{x}=\phi (\vec{x})=\sum_{j=1}^s x_j \phi_{\cdot, j}=\sum_{j=1}^s x_j y_j \vec{x}$. So $\lambda=\sum_{j=1}^s x_j y_j=\sum_j \phi_{j,j}=\trace(\phi)$. \qed \end{proof}

\begin{prop} \label{syz gamma}  We have 
 $\rank (\syz_R M) \cdot \syz([\gamma_M])+ \rank(M) \cdot [\gamma_{\syz M}]=0$.\end{prop}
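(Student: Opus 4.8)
The plan is to translate the statement into an identity inside the one--dimensional $k$--vector space $\soc(\stEnd_R M)$, using that ``trace mod $R$'' faithfully detects the socle. Write $N=\syz_R M$; by~\ref{sit:Gor 1} we have $N=\tau(M)$, so $N$ is again a nonfree indecomposable in $L_p(R)$, with $N\Rbar$ free over $\Rbar$ on a basis lying in $N$ (same reasoning as for $M$, since $\Rbar$ is a DVR), and $\syz_R$ is an $R$--algebra isomorphism $\stEnd_R M\to\stEnd_R N$ carrying $\soc(\stEnd_R M)$ onto $\soc(\stEnd_R N)$. Since $k$ is algebraically closed, $k_M=k_N=k$, so by Remark~\ref{AR seq as pushout} each of these socles is a one--dimensional $k$--vector space; being annihilated by $\m$, it is an honest $k$--space, on which an integer such as $\rank(M)$ acts through its image in $k$.

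For a nonfree indecomposable $L\in L_p(R)$, I would introduce $t_L\colon\stEnd_R L\to Q/R$ by $t_L([h])=\trace(h\otimes_R Q)\bmod R$. This is well defined and $R$--linear by Lemma~\ref{trace argument!} (taking $g=\id_L$ there shows a stably zero endomorphism has trace in $R$). The crucial point is that $t_L$ is \emph{injective} on $\soc(\stEnd_R L)$: by Proposition~\ref{sigma with a single column} there is a generator $[\sigma]$ of $\soc(\stEnd_R L)$ with $\trace\overline{\sigma}=\overline{\sigma}_{1,1}=\gamma$, hence $\trace(\sigma\otimes_R Q)=\gamma$ (recall $\trace(h\otimes_R Q)=\trace\hbar$, as $L\Rbar$ is $\Rbar$--free on a basis in $L$); so for $r\in R$ one has $t_L(r[\sigma])=r\gamma\bmod R$, which vanishes exactly when $r\gamma\in R$, i.e.\ (as $\gamma\notin R$) exactly when $r\in\m$, i.e.\ exactly when $r[\sigma]=0$. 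In particular $\soc(\stEnd_R L)=R\,[\sigma]$ and $t_L$ separates its points.

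Now I would identify the three relevant socle elements via their traces. Since $R$ is a domain its unique minimal prime is $\p=(0)$, so in Notation~\ref{gamma_M} one may take $z=1$ and $\gpt=\gamma$; Theorem~\ref{main theorem} then gives $[\gamma_M]\in\soc(\stEnd_R M)$, and the computation in its proof (with $\p=0$) yields $\trace(\gamma_M\otimes_R Q)=\rank(M)\,\gamma$. Hence $t_M([\gamma_M])=\rank(M)\gamma\bmod R=t_M(\rank(M)\,[\sigma])$, and injectivity of $t_M$ on the socle forces $[\gamma_M]=\rank(M)\,[\sigma]$; likewise, choosing a socle generator $\sigma'$ of $\stEnd_R N$ with $\trace(\sigma'\otimes_R Q)=\gamma$, we get $[\gamma_N]=\rank(N)\,[\sigma']$. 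Finally, $\syz_R([\sigma])$ generates $\soc(\stEnd_R N)=R\,[\sigma']$ since $\syz_R$ is a ring isomorphism, and applying Proposition~\ref{negative trace} to $f=\sigma$ together with any $g$ representing $\syz_R([\sigma])$ gives $\trace\overline{\sigma}+\trace\overline{g}\in R$, i.e.\ $t_N(\syz_R[\sigma])=-\gamma\bmod R=t_N(-[\sigma'])$; injectivity of $t_N$ on the socle then yields $\syz_R([\sigma])=-[\sigma']$. Combining the three identities (and recalling $[\gamma_N]=[\gamma_{\syz M}]$),
\[\rank(N)\cdot\syz_R([\gamma_M])+\rank(M)\cdot[\gamma_N]=\rank(N)\rank(M)\,\syz_R([\sigma])+\rank(M)\rank(N)\,[\sigma']=\rank(M)\rank(N)\bigl(-[\sigma']+[\sigma']\bigr)=0,\]
which is the assertion.

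The main obstacle I expect is the bookkeeping around the map $t_L$: one must check it is well defined and $R$--linear, verify that the matrix trace $\trace\hbar$ appearing in Propositions~\ref{sigma with a single column} and~\ref{negative trace} equals $\trace(h\otimes_R Q)$ — immediate here because $M\Rbar$ (and $N\Rbar$) is $\Rbar$--free on a basis contained in $M$ (resp.\ $N$) — and, most importantly, establish injectivity of $t_L$ on the socle, for which Proposition~\ref{sigma with a single column} is precisely the right tool since it supplies a socle generator whose trace is literally $\gamma\notin R$. Once that linear--algebra scaffolding is in place, the rest is just feeding Theorem~\ref{main theorem}, Proposition~\ref{negative trace}, and the isomorphism $\syz_R$ into it.
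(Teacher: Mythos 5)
Your proof is correct and takes essentially the same route as the paper: both arguments use injectivity of the trace map $\soc(\stEnd_R L)\to\Rbar/R$ together with Lemma~\ref{sigma with a single column} to write $[\gamma_M]=\rank(M)\,[\sigma]$, and then Proposition~\ref{negative trace} to identify $\syz_R([\sigma])$ with $-[\sigma']$, whence the relation follows by bilinearity. Your write-up simply makes explicit the steps the paper compresses into ``$[\gamma_{\syz M}]=-\rank(\syz_R M)\cdot\syz([f])$.''
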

 
 \begin{proof} Since $\soc \stEnd_R(M)$ is $R$-simple,  the map  $\soc \stEnd_R(M) \to \Rbar/R$, induced by trace, is injective. We also know that $[\gamma_M] \in  \soc \stEnd_R(M)$, by Theorem~\ref{main theorem}. Therefore $[\gamma_M]=\rank (M) \cdot [f]$ if we take $f \in \End_R M$ as in Lemma~\ref{sigma with a single column}; and Proposition~\ref{negative trace} implies $[\gamma_{\syz M}] =- \rank (\syz_R M ) \cdot \syz([f])$. The result follows. 
 \qed \end{proof}
 
 \section{Appendix.}\label{sec:ap}

In this appendix we record some lemmas for reduced connected graded rings of dimension one. 
The following theorem is well-known. 

\begin{theorem}\label{unramified} Let $B$ be one-dimensional, noetherian, local domain  with integral closure $\Bbar$ and $\m_B$-adic completion $\hat{B}$. Then the following are equivalent.

(1) $\hat{B}$ is a domain. (``$B$ is analytically irreducible.'') 

(2) $\Bbar$ is local and $\hat{B}$ is reduced. (``$B$ is unibranched and analytically unramified.'')

(3) $\Bbar$ is local and finitely generated as a $B$-module.
\end{theorem}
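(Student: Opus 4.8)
The plan is to prove the cycle of implications $(3) \Rightarrow (2) \Rightarrow (1) \Rightarrow (3)$, invoking standard commutative-algebra facts about completion, excellent/Nagata rings, and the finiteness of integral closure. First I would prove $(3) \Rightarrow (2)$: if $\Bbar$ is a finitely generated $B$-module, then $\Bbar \otimes_B \hat{B} = \widehat{\Bbar}$ is the $\m_B$-adic completion of $\Bbar$, which is a completion of a semilocal Dedekind domain; since $\Bbar$ is assumed local it is a complete DVR, hence in particular a domain, so $\hat{B} \subseteq \widehat{\Bbar}$ is reduced. (Here one uses that $\hat B \to \widehat{\Bbar}$ is injective because $\Bbar$ is a faithful torsion-free $B$-module and completion over a Noetherian local ring is flat.) Unibranchedness of $B$ (i.e. $\Bbar$ local) is part of the hypothesis, so $(2)$ holds.

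Next, $(2) \Rightarrow (1)$: assuming $\Bbar$ is local and $\hat{B}$ is reduced, I want to conclude $\hat{B}$ is a domain. Since $\hat B$ is reduced and one-dimensional Noetherian local, its minimal primes $\q_1,\dots,\q_t$ are exactly its associated primes, and $\hat B$ embeds into $\prod_i \hat B/\q_i$. The minimal primes of $\hat B$ lying over $(0)$ in $B$ correspond to the ``branches'' of $B$; concretely, the integral closure $\overline{\hat B}$ of $\hat B$ in its total quotient ring is $\prod_i \overline{\hat B/\q_i}$, and by the compatibility of normalization with completion for excellent (or Nagata) rings — a one-dimensional Noetherian local domain is Nagata, e.g. by \cite[Corollary 2.3.6]{Swanson-Huneke} or the excellence machinery — we have $\overline{\hat B} = \Bbar \otimes_B \hat B = \widehat{\Bbar}$. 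Since $\Bbar$ is local, $\widehat{\Bbar}$ is local, hence connected, hence has a unique idempotent; but $\prod_i \overline{\hat B/\q_i}$ has $2^t$ idempotents. Therefore $t = 1$ and $\hat B$ is a domain.

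Finally, $(1) \Rightarrow (3)$: if $\hat B$ is a domain, then it is a one-dimensional Noetherian local domain, hence (being complete) excellent, so its integral closure $\overline{\hat B}$ is a finitely generated $\hat B$-module. By faithfully flat descent along $B \to \hat B$ — using that $\overline{\hat B} = \Bbar \otimes_B \hat B$ again by the Nagata/excellence compatibility of integral closure with completion, and that a $B$-module $N$ is finitely generated iff $N \otimes_B \hat B$ is a finitely generated $\hat B$-module — we conclude $\Bbar$ is a finitely generated $B$-module. Locality of $\Bbar$ then follows since $\Bbar \otimes_B \hat B = \widehat{\Bbar}$ is local (being the completion of the semilocal ring $\Bbar$ along $\m_B$, it is a product of the completions at the maximal ideals of $\Bbar$; but it equals $\overline{\hat B}$, which is a domain, so $\Bbar$ has a single maximal ideal).

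The main obstacle is the compatibility statement $\overline{\hat B} \cong \Bbar \otimes_B \hat B \cong \widehat{\Bbar}$ and the attendant finiteness: this rests on $B$ being a Nagata (equivalently here, excellent) ring, which for a one-dimensional Noetherian local domain is a theorem I would cite rather than reprove (Krull–Akizuki gives finiteness of $\Bbar$ over $B$ in dimension one only after knowing $B$ is Nagata, and the clean reference is \cite[Chapter 4]{Swanson-Huneke} or \cite[Theorem 2.1]{Matsumura} together with the excellence of complete local rings). Once that black box is in place, everything else is a bookkeeping argument with idempotents and faithfully flat descent. I would be careful to state precisely which direction needs which hypothesis, since the theorem is exactly a disentangling of ``unibranched'' from ``analytically unramified.''
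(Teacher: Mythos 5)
The paper states Theorem~\ref{unramified} without proof, simply calling it well-known, so there is no in-text argument to compare yours against; I can only assess the proposal on its own terms. Your cycle $(3)\Rightarrow(2)\Rightarrow(1)\Rightarrow(3)$ is a sensible organization, and the step $(3)\Rightarrow(2)$ is correct as written (indeed it proves $(3)\Rightarrow(1)$ outright, since $\hat{B}$ embeds by flatness into the complete DVR $\widehat{\Bbar}=\Bbar\otimes_B\hat{B}$).

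The genuine gap is in $(2)\Rightarrow(1)$, and it leaks into $(1)\Rightarrow(3)$: you rest the identification $\overline{\hat{B}}=\Bbar\otimes_B\hat{B}=\widehat{\Bbar}$ on the assertion that ``a one-dimensional Noetherian local domain is Nagata.'' That assertion is false --- the classical examples of Akizuki and Nagata are one-dimensional Noetherian local domains whose integral closure is not module-finite and whose completion is not reduced, which is exactly why the equivalences in this theorem have content. Neither citation you offer supports the claim: \cite[Corollary 2.3.6]{Swanson-Huneke} concerns integral closures of graded rings, and \cite[Theorem 2.1]{Matsumura} is a Cayley--Hamilton statement. The correct input for $(2)\Rightarrow(1)$ is Rees's theorem that an analytically unramified Noetherian local ring has module-finite integral closure: the finiteness of $\Bbar$ over $B$ must be \emph{deduced from} the hypothesis that $\hat{B}$ is reduced, not asserted as a general structural fact about one-dimensional local domains. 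Once that finiteness is in hand, your idempotent count (or simply the $(3)\Rightarrow(1)$ argument) closes the step. In $(1)\Rightarrow(3)$ the situation is repairable without the false claim: you only need the containment $\Bbar\otimes_B\hat{B}\subseteq\overline{\hat{B}}$, which follows from $\Bbar\subseteq Q(B)\subseteq Q(\hat{B})$ together with integrality over $\hat{B}$, plus module-finiteness of $\overline{\hat{B}}$ over the complete local domain $\hat{B}$ and faithfully flat descent of finite generation; the full equality and the Nagata assertion are not needed there.
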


\notation \label{notation:Rhat} If $R$ is a connected graded ring, let $\Rhat$ denote the completion of $R$ with respect to its  graded maximal ideal, $\m$.

\begin{lem} \label{Rbar and Rhat are gdd} Let $R$ be a reduced connected graded ring. Then:

\begin{enumerate}
\item The integral closure of $R$ in $R[\text{nonzerodivisors}]^{-1}$ coincides with the integral closure of $R$ in $Q=R[\text{graded nonzerodivisors}]^{-1}$, our definition of $\Rbar$. Moreover, $\Rbar =\bigoplus_{i \ge 0} \Rbar_i$ is an $\NN$-graded subring of $Q$.
\item We have $\hat{R}= \prod_{i \ge 0} R_i$. 
\item The completion, $\Rhat$, is also reduced. If $R$ is a domain, then $\Rhat$ is a domain.
\item The integral closure, $\Rbar$, is finitely generated as an $R$-module.
\item The integral closure of the completion, $\overline{\Rhat}$, is finitely generated as an $\Rhat$-module.\end{enumerate}\end{lem}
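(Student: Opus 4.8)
The plan is to prove the five assertions in the order (1), (4), (2), (3), (5); throughout I take $\dim R=1$ (the case $\dim R=0$ is immediate, $R$ then being a finite reduced $k$-algebra). \textbf{Part (1):} the identification of $\Rbar$ with the integral closure of $R$ in $R[\text{nonzerodivisors}]^{-1}$ is \cite[Corollary 2.3.6]{Swanson-Huneke}, as already used for Lemma~\ref{trace is integral}; that $\Rbar$ is graded is the standard fact that the integral closure of a $\ZZ$-graded ring in a $\ZZ$-graded overring is graded, and it sits in nonnegative degrees since a homogeneous $q\in Q$ of negative degree satisfying a monic relation over $R$ must, on comparing homogeneous components and using $R_i=0$ for $i<0$, satisfy $q^n=0$, hence $q=0$ as $Q$ is reduced. \textbf{Part (4):} write $\Rbar=\prod_{\p\in\min R}\overline{R/\p}$; since $R$ is noetherian connected graded, $\m$ is finitely generated, so $R=k[x_1,\dots,x_s]$ for homogeneous $x_j$, and likewise each $R/\p$ is a finitely generated domain over the field $k=R_0$, so the classical finiteness of normalization of such rings (see, e.g., \cite{Eisenbud:book}) shows $\overline{R/\p}$ is $R/\p$-finite. \textbf{Part (2):} taking $C=\max_j\deg x_j$, one has $\m^n\subseteq\bigoplus_{i\ge n}R_i$ trivially and $\bigoplus_{i\ge Cn}R_i\subseteq\m^n$ (every monomial in the $x_j$ of degree $\ge Cn$ involves $\ge n$ factors, and the $R_i$, $i\ge 1$, are spanned over $k$ by such monomials), so the two filtrations are cofinal and $\Rhat=\varprojlim_n R/\m^n=\varprojlim_n R\big/\bigoplus_{i\ge n}R_i=\prod_{i\ge 0}R_i$.

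For part (3) I would first treat the domain case. As $\m$ is the unique graded maximal ideal and $R/\m^n$ is already local, $\Rhat$ is the completion of the one-dimensional noetherian local domain $R_\m$, so by Theorem~\ref{unramified} it is enough that $\overline{R_\m}$ be local (it is $R_\m$-finite by (4)). By (1), $\Rbar$ is an $\NN$-graded normal domain with $\Rbar_0$ a field (a domain integral over $k$), hence graded-local with graded maximal ideal $\m_{\Rbar}=\bigoplus_{i\ge 1}\Rbar_i$; moreover $\m_{\Rbar}$ is the only nonzero graded prime of $\Rbar$, since a proper graded quotient of a one-dimensional connected graded domain is an $\NN$-graded field, hence concentrated in degree $0$. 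As $\m_R\Rbar$ is nonzero and proper and minimal primes of graded ideals are graded, $\sqrt{\m_R\Rbar}=\m_{\Rbar}$, so $\m_{\Rbar}$ is the unique maximal ideal of $\Rbar$ lying over $\m_R$ and $\overline{R_\m}=\Rbar_{\m_R}$ is local; thus $\Rhat$ is a domain by Theorem~\ref{unramified}. (Alternatively one may quote Lemma~\ref{lem:polynomial ring}, which presents $\Rbar$ as a polynomial ring over a field.) For general reduced $R$, the inclusion $R\hookrightarrow R':=\prod_{\p\in\min R}R/\p$ is module-finite, so completing $0\to R\to R'\to R'/R\to 0$ (which stays exact, $\Rhat$ being $R$-flat) gives an injection $\Rhat\hookrightarrow\widehat{R'}=\prod_{\p}\widehat{R/\p}$ into a finite product of domains (by the domain case), whence $\Rhat$ is reduced.

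Finally, for part (5): by (3), $\Rhat$ is a reduced complete local noetherian ring of dimension one, so $\overline{\Rhat}=\prod_{\mathfrak{P}\in\min\Rhat}\overline{\Rhat/\mathfrak{P}}$, and each $\Rhat/\mathfrak{P}$ is a complete local noetherian domain of dimension one, hence its own completion, so the implication $(1)\Rightarrow(3)$ of Theorem~\ref{unramified} makes $\overline{\Rhat/\mathfrak{P}}$ a finite $\Rhat/\mathfrak{P}$-module; as $\min\Rhat$ is finite, $\overline{\Rhat}$ is $\Rhat$-finite. I expect the domain case of part (3) to be the main obstacle --- equivalently, that $R$ is analytically irreducible at its irrelevant ideal, i.e.\ that $\overline{R_\m}$ is local; the remaining parts are bookkeeping with gradings and with completions of module-finite extensions.
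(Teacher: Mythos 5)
Your proposal is correct, and parts (1), (2), (4) and (5) run essentially as in the paper: (1) is the citation of \cite[Corollary 2.3.6]{Swanson-Huneke} plus the standard gradedness of integral closure, (2) is the same cofinality-of-filtrations argument, (4) is finiteness of normalization for affine $k$-algebras (the paper cites \cite[Theorem 72]{Matsumura:1970} directly rather than first splitting into domains), and (5) is exactly the paper's appeal to Theorem~\ref{unramified} applied to the complete local domains $\Rhat/\mathfrak{P}$. The genuine divergence is in part (3), which you correctly flag as the crux. The paper reads (3) off \emph{directly} from the description $\Rhat=\prod_{i\ge 0}R_i$ in (2): an element of this ring has a well-defined lowest-degree homogeneous component, and since $R$ is reduced the lowest component of $ab$ (resp.\ $a^n$) is the product of the lowest components, so $\Rhat$ is a domain (resp.\ reduced) whenever $R$ is. Your route instead identifies $\Rhat$ with $\widehat{R_{\m}}$, invokes the implication $(3)\Rightarrow(1)$ of Theorem~\ref{unramified}, and verifies unibranchedness by showing $\m_{\Rbar}$ is the unique prime of $\Rbar$ over $\m$ via the grading; the reduced case is then handled by embedding $\Rhat$ into $\prod_{\p}\widehat{R/\p}$. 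This is all sound, but it is heavier: it makes (3) depend on (4) and on the nontrivial local Theorem~\ref{unramified}, whereas the paper's lowest-degree-term argument is self-contained and two lines. What your version buys is an explicit proof that $R$ is analytically irreducible at $\m$ in the structural sense (unibranched with finite normalization), which is morally the content of the later Lemmas~\ref{bar commutes with hat} and~\ref{lem:polynomial ring}; but for the statement at hand the paper's shortcut is preferable.
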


\begin{proof} Statement (1) is \cite[Corollary 2.3.6]{Swanson-Huneke}. Statement (2) can be checked by noting that $\{\m^i\}_i$ is cofinal with $\{\bigoplus_{ j \ge i} R_j\}_i$, and checking that the completion of $R$ with respect to the latter filtration is isomorphic to $ \prod_{i \ge 0} R_i$. From (2) we see that $\hat{R}$ is reduced, resp. a domain, if $R$ is such. As $R$ is a finitely generated algebra over the field $R_0$, (4) is a consequence of \cite[Theorem 72]{Matsumura:1970}.  The last assertion is a consequence of Theorem~\ref{unramified} (alternatively, it follows from (4)). \qed \end{proof}

\begin{lem} \label{bar commutes with hat}  Let $D$ be a connected graded domain of dimension one, and let $\q=\bigoplus_{i \ge 1} D_i$, and  $\n=\bigoplus_{i \ge 1} \Dbar_i$. 
Then

(a) $\Dbar_0$ is a field,  and 

(b)  $\prod_{i \ge 0} \Dbar_i =\hat{\Dbar}^\n=\hat {\Dbar}^\q=\overline{\hat{D}} $.
\end{lem}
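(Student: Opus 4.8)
The plan is to prove (a) first, since it is needed to make sense of the claim that $\hat{\Dbar}$ is taken with respect to a maximal ideal. Because $\Dbar$ is integral over $D$ and $D$ has dimension one, $\Dbar$ also has dimension one; moreover $\Dbar$ is a domain (it sits inside $Q(D)$). By Lemma~\ref{Rbar and Rhat are gdd}(1), $\Dbar$ is $\NN$-graded, so $\Dbar_0$ is a subring of $\Dbar$ consisting of elements of degree zero. Every nonzero element of $\Dbar_0$ is integral over $D$, hence over $D_0$, which is a field; so $\Dbar_0$ is a domain integral over a field, therefore a field. This gives (a), and shows that $\n = \bigoplus_{i\ge 1}\Dbar_i$ is the unique graded maximal ideal of $\Dbar$.

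For (b), the first task is to identify the three completions of $\Dbar$ appearing on the left. Since $\Dbar$ is finite over $D$ (Lemma~\ref{Rbar and Rhat are gdd}(4)), the ideals $\q\Dbar$ and $\n$ have the same radical — both are $\n$-primary — so the $\q\Dbar$-adic and $\n$-adic topologies on $\Dbar$ agree, giving $\hat{\Dbar}^{\n} = \hat{\Dbar}^{\q}$. (Here $\hat{\Dbar}^{\q}$ means completion at the ideal generated by $\q$, i.e. the extension of the filtration from $D$.) Next, applying Lemma~\ref{Rbar and Rhat are gdd}(2) to the reduced connected graded ring $\Dbar$ — which is legitimate once (a) tells us $\Dbar$ is connected graded with $\Dbar_0$ a field — yields $\hat{\Dbar}^{\n} = \prod_{i\ge 0}\Dbar_i$. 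That disposes of the first two equalities.

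The substantive step is the last equality, $\hat{\Dbar}^{\q} = \overline{\hat D}$; I expect this to be the main obstacle. One direction is easy: $\Dbar$ is finite over $D$, so $\hat{\Dbar}^{\q} = \Dbar \otimes_D \hat D$ is finite over $\hat D$ and, being a completion of a domain that is moreover analytically irreducible (here I would invoke Lemma~\ref{Rbar and Rhat are gdd}(3), which says $\hat{\Dbar}^{\q}$ — equivalently $\prod_i \Dbar_i$ — is a domain), it is a one-dimensional local domain containing $\hat D$ and integral over it; hence $\hat D \subseteq \hat{\Dbar}^{\q} \subseteq \overline{\hat D}$. For the reverse inclusion I would argue that $\hat{\Dbar}^{\q}$ is already integrally closed: by Theorem~\ref{unramified}, since $\Dbar$ is a normal one-dimensional local domain finite over $D$, the ring $D$ is analytically irreducible and $\hat{\Dbar}^{\q}$ is its normalization, which is normal because completion of an excellent (here, essentially finite-type over a field) normal local domain is normal. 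Concretely: $\Dbar$ is regular (a normal one-dimensional local ring is a DVR, or in the graded case a polynomial ring by Lemma~\ref{lem:polynomial ring}), hence $\hat{\Dbar}^{\q}$ is a regular — in particular normal — local domain, so it equals its own integral closure in its fraction field, and since it has the same fraction field as $\hat D$ up to the finite integral extension, $\overline{\hat D} \subseteq \hat{\Dbar}^{\q}$. Combining the two inclusions gives $\overline{\hat D} = \hat{\Dbar}^{\q}$, completing the chain of equalities. The delicate point to get right is bookkeeping about which fraction field each normalization is taken in, and confirming that $\hat{\Dbar}^{\q}$ is a domain so that "the" integral closure $\overline{\hat D}$ makes sense — both of which are handled by the already-established parts of Lemma~\ref{Rbar and Rhat are gdd} and Theorem~\ref{unramified}.
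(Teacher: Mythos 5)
Your overall route matches the paper's: prove (a) from the grading, get $\prod_i\Dbar_i=\hat{\Dbar}^{\n}$ by applying the earlier lemma to the connected graded ring $\Dbar$, and obtain $\hat{\Dbar}^{\q}=\overline{\hat D}$ by sandwiching $\hat{\Dbar}^{\q}$ between $\hat D$ and $\overline{\hat D}$ and then citing normality of the completion of an excellent normal ring. That last step, which you flagged as the likely obstacle, is fine and is exactly what the paper does.

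The one step whose justification is insufficient as written is $\hat{\Dbar}^{\n}=\hat{\Dbar}^{\q}$. You assert that $\q\Dbar$ and $\n$ have the same radical ``since $\Dbar$ is finite over $D$,'' but finiteness alone only gives that $\Dbar/\q\Dbar$ is artinian, i.e.\ that $\sqrt{\q\Dbar}$ is a finite intersection of maximal ideals of $\Dbar$ lying over $\q$; it does not rule out there being several such maximal ideals (this is precisely the unibranchedness issue isolated in Theorem~\ref{unramified}, and for a non-unibranched $D$ the $\q$-adic completion of $\Dbar$ would be a product of local rings, not $\hat{\Dbar}^{\n}$). What saves the claim is the grading, which you do not invoke here: either argue as the paper does, by localizing at the unique graded maximal ideal $\n$ and using that a nonzero graded module has nonzero localization there, to get $\n^i\subseteq\q\Dbar$; or note directly that for homogeneous $t\in\Dbar_d$ with $d\ge 1$, taking the degree-$nd$ component of an integral equation for $t$ over $D$ forces $t^n\in\q\Dbar$, whence $\sqrt{\q\Dbar}=\n$. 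A small further caution: your parenthetical appeal to Lemma~\ref{lem:polynomial ring} is circular, since that lemma is proved using the present one; fortunately your main line (excellence of $\Dbar$ plus \cite[Theorem 79]{Matsumura:1970}) does not need it.
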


\begin{proof} 
 The  notation $\Dbar_i$ means $(\Dbar)_i$, and makes sense due to Lemma~\ref{Rbar and Rhat are gdd}, as does $\overline{\hat{D}}$. Since $\Dbar$ is an $\NN$-graded domain, $\n$ is  a prime ideal, and is thus maximal since $\dim \Dbar=\dim D=1$. So  $\Dbar_0$ is a field. Now $\prod_{i \ge 0} \Dbar_i=\hat{\Dbar}^\n$ by Lemma~\ref{Rbar and Rhat are gdd}. Note that $X_\n \neq 0$ for all graded $\Dbar$-modules $X \neq 0$. Now $\Dbar_\n/(\q \Dbar_\n)$ is an artinian  local ring, so there exists $i \ge 1$ such that  $ ((\n^i +\q\Dbar)/\q\Dbar )_\n =0$, hence $ (\n^i +\q \Dbar)/\q \Dbar =0$. Thus $\{\n^i\}_i$ and $\{\q^i \Dbar\}_i$ are cofinal, so  $\hat {\Dbar}^\n=\hat{\Dbar}^\q$. 
Lastly we show $\hat{\Dbar}^\q=\overline{\hat{D}}$.  Note  that $\Dbar \into \overline{\hat{D}}$, and since $\overline{\hat{D}}$ is complete by Lemma~\ref{Rbar and Rhat are gdd}, we have $ \overline{\hat{D}} \supseteq \hat{\Dbar}^q  \supseteq \Dhat$. It remains to observe that $\hat{\Dbar}^q $ is normal. But any $I$-adic completion of an excellent, normal ring, such as $\Dbar$, is normal (\cite[Theorem 79]{Matsumura:1970}). \qed \end{proof}

\begin{lem} \label {lem:polynomial ring}  Let $D$ be a connected graded domain of dimension one , and let $l= \min \{ i > 0 | \Dbar_i \neq 0 \}$.  Let $t$ be any nonzero element of $\Dbar_l$. Then $\Dbar=\bigoplus_{i\ge 0} \Dbar_0 t^i$ is the polynomial ring over the field $\Dbar_0$ in the variable $t$; and $\hat{\Dbar}=\prod_{ i\ge 0} \Dbar_i t^i$ is the power series ring. \qed \end{lem}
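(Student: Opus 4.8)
The plan is to work entirely inside $Q:=Q(D)$, using that $\Dbar$ is an $\NN$-graded normal domain with $\Dbar_0=K$ a field: the grading is Lemma~\ref{Rbar and Rhat are gdd}(1), the field assertion is part (a) of Lemma~\ref{bar commutes with hat}, and $\Dbar$ is by definition the integral closure of $D$ in $Q$, hence integrally closed in $Q$. The first step is to see that $t$ is transcendental over $K$. If it were algebraic, then $t^{-1}$ would satisfy its monic minimal polynomial over $K\subseteq\Dbar$, so $t^{-1}$ would be integral over $\Dbar$ and therefore lie in $\Dbar$; but $\deg t^{-1}=-l<0$ and $\Dbar$ is $\NN$-graded, a contradiction. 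Since $Q$ has transcendence degree $\dim D=1$ over $K$, this makes $\{t\}$ a transcendence basis of $Q$ over $K$.

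The heart of the proof --- and the step I expect to be the main obstacle --- is to show that the degree-zero part $Q_0$ of $Q$ is no bigger than $K$. Given $0\ne w\in Q_0$, it is algebraic over $K(t)$ by the previous step, so after clearing denominators there is a nonzero relation $\sum_j p_j(t)w^j=0$ with $p_j\in K[t]$. Writing $p_j(t)=\sum_k c_{jk}t^k$ and regrouping gives $\sum_k t^k\big(\sum_j c_{jk}w^j\big)=0$; since $w^j$ has degree $0$ and $\deg t=l>0$, the $k$-th summand is homogeneous of degree $kl$, and these degrees are distinct, so $t^k\big(\sum_j c_{jk}w^j\big)=0$ for every $k$, hence $\sum_j c_{jk}w^j=0$ for every $k$ because $Q$ is a field. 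For at least one $k$ this is a nonzero polynomial over $K$ with $w$ as a root, so $w$ is integral over $K\subseteq\Dbar$, whence $w\in\Dbar$; being homogeneous of degree $0$, $w\in\Dbar_0=K$. Thus $Q_0=K$.

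The rest is then quick. For nonzero $s\in\Dbar_i$, both $s^l$ and $t^i$ lie in $\Dbar_{il}$, so $s^l/t^i\in Q_0=K$, say $s^l=c\,t^i$ with $c\in K^\times$; writing $i=ql+r$ with $0\le r<l$, the element $u:=s/t^q$ satisfies the monic equation $u^l=c\,t^r$ over $K[t]\subseteq\Dbar$, so $u\in\Dbar$ and hence $u\in\Dbar_r$, which is $0$ unless $r=0$. Therefore every degree occurring in $\Dbar$ is a multiple of $l$, and since these degrees form a subsemigroup of $\NN$ containing $l$, they are exactly $l\NN$. Moreover, for nonzero $s\in\Dbar_{ml}$ we have $s/t^m\in Q_0=K$, so $\Dbar_{ml}=Kt^m$; hence $\Dbar=\bigoplus_{m\ge0}Kt^m$, which is the polynomial ring $K[t]$ because $t$ is transcendental over $K$. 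Finally, $\Dbar$ is itself a reduced connected graded ring, so Lemma~\ref{Rbar and Rhat are gdd}(2) gives $\hat{\Dbar}=\prod_{i\ge0}\Dbar_i=\prod_{m\ge0}Kt^m$, the formal power series ring $K[[t]]$.
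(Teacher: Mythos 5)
Your argument is correct, but it takes a genuinely different route from the paper's. The paper reduces to the case $D=\Dbar$ and then passes immediately to the completion: by Lemma~\ref{bar commutes with hat}, $\hat{\Dbar}=\prod_i\Dbar_i$ equals $\overline{\hat{D}}$, hence is a normal one-dimensional complete local domain, i.e.\ a DVR; one then checks that $t$ must be a uniformizing parameter (comparing with $\pi\hat{\Dbar}=\prod_{i\ge 1}\Dbar_i$) and reads off $\Dbar_i=\Dbar_0t^{i/l}$ from that. You instead stay inside the graded quotient ring $Q$ and replace the DVR structure by a transcendence-degree argument together with the integral closedness of $\Dbar$ in $Q$; the pivotal step $Q_0=\Dbar_0$, obtained by splitting an algebraic relation over $K(t)$ into homogeneous components, does exactly the work that the uniformizer does in the paper. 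What each buys: the paper's proof is a three-line corollary of Lemma~\ref{bar commutes with hat}(b), whereas yours is longer but more elementary, using only part (a) of that lemma plus Lemma~\ref{Rbar and Rhat are gdd}, and it isolates the purely graded content of the statement before any completion appears. One cosmetic slip: $Q$ is a graded field (a Laurent extension of $Q_0$), not literally a field, but your cancellation of $t^k$ only needs $Q$ to be a domain, so nothing is affected.
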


\begin{proof} By the previous lemmas, $\Dbar$ is  connected graded, so we may assume $D=\Dbar$ to improve notation. Then the previous lemma also shows that $\hat{D}=\prod_{i \ge 0} D_i$ is a  normal domain. Thus it is a DVR; let $\pi \in \hat{D}$ be a uniformizing parameter. So $\pi\hat{D}=\prod_{i \ge 1} D_i$.   Then $t=u \pi^i$ for some unit $u \in \prod_{i\ge 0} D_i$, and it follows that $i=1$,   hence $t$ is a uniformizing parameter for $\hat{D}$. It follows that $D_i=0$ for $i \notin \NN l$, and $D_i=D_0 t^{i/l}$ for $i \in \NN l$. The lemma follows. \qed \end{proof}

\begin{remark} \label{just a semigroup ring} If $D_0$ is algebraically closed, Lemmas~\ref{bar commutes with hat} and~\ref{lem:polynomial ring} show that $D$ has the form  $k[t^{i_1},...,t^{i_n}]$.\end{remark}

\bibliographystyle{spmpsci}      

\def\cprime{$'$} \def\polhk#1{\setbox0=\hbox{#1}{\ooalign{\hidewidth
  \lower1.5ex\hbox{`}\hidewidth\crcr\unhbox0}}}
  \def\polhk#1{\setbox0=\hbox{#1}{\ooalign{\hidewidth
  \lower1.5ex\hbox{`}\hidewidth\crcr\unhbox0}}}


\end{document}